\newtheorem*{thm*}{Theorem}
\newtheorem{thm}[subsection]{Theorem} 
\newtheorem{lemma}[subsection]{Lemma}       
\newtheorem{prop}[subsection]{Proposition} 
\newtheorem{defn}[subsection]{Definition}
\newtheorem{ex}[subsection]{Example}
\newtheorem{cor}[subsection]{Corollary}
\newtheorem{remark}{Remark}  
\newcommand{\h}{{\mathfrak{H}}}
\renewcommand{\O}{{\mathcal{O}}}
\newcommand{\U}{{\mathcal{U}}}
\newcommand{\w}{{\mathfrak{w}}}
\renewcommand{\u}{{\mathfrak{u}}}
\newcommand{\Q}{{\mathbb{Q}}}
\newcommand{\R}{{\mathbb{R}}}
\newcommand{\N}{{\mathbb{N}}}
\newcommand{\Z}{{\mathbb{Z}}}
\newcommand{\n}{{\mathfrak{n}}}
\newcommand{\ngg}{\mathfrak{n}_G}
\newcommand{\zz}{{\mathfrak{z}}}
\newcommand{\om}{{\omega}}
\newcommand{\V}{{\mathfrak{v}}}
\newcommand{\GN}{{\Gamma \setminus N}}
\newcommand{\metric}{{\langle\: ,\,  \rangle }}
\newcommand{\spn}{{\mbox{span}}}
\newcommand{\metn}{\{\mathfrak{n}, \metric\}}
\newcommand{\ds}{\displaystyle}%(added M 05/07)
\newcommand{\pf}{\operatorname{pf}}%(added M 05/07)
\newcommand{\sgn}{\operatorname{sgn}}%(added M 05/07)
\newcommand{\Span}{\operatorname{span}} %(added M 05/13)
\newcommand{\partg}{\frac{\partial g}{\partial a_i}} %(added R 05/20/15)
\newcommand{\mb}{\mathbf}  %(added R 05/20/15)
\title[]{Graphs and  Metric 2-step Nilpotent Lie Algebras}
\author{Rachelle DeCoste}
\address{Mathematics Department, Wheaton College, Norton, MA 02766}
\email{decoste\_rachelle@wheatoncollege.edu}
\author{Lisa DeMeyer}
\address{Department of Mathematics, Central Michigan University, Mount Pleasant, MI 48859}
\email{demey1la@cmich.edu}
\author{Meera Mainkar}
\address{Department of Mathematics, Central Michigan University, Mount Pleasant, MI 48859}
\email{maink1m@cmich.edu}
\date{\today}
\begin{document}

\pagestyle{fancy} 
\begin{abstract}

Dani and Mainkar introduced a method for constructing a 2-step nilpotent Lie algebra $\n_G$ from a simple directed graph $G$ in 2005. There is a natural inner product on $\ngg$ arising from the construction. We study geometric properties of the associated simply connected 2-step nilpotent Lie group $N$ with Lie algebra $\ngg$. We classify singularity properties of the Lie algebra $\ngg$ in terms of the graph $G$.  A comprehensive description is given of graphs $G$ which give rise to Heisenberg-like Lie algebras.  Conditions are given on the graph $G$ and on a lattice $\Gamma \subseteq N$ for which the quotient $\GN$, a compact nilmanifold, has a dense set of smoothly closed geodesics. %In particular, if $G$ is a connected graph on 4 vertices which is not a star graph or if $G$ is the path on three vertices, then for any lattice $\Gamma$ in  the simply connected Lie group $\N$ with Lie algebra $\ngg$ the quotient $\GN$ has a dense set of smoothly closed geodesics. Additionally, a particular lattice is given for which $\GN$ has a dense subset of closed geodesics in each of the cases where $G$ is a star graph and where $G$ is a complete graph on three vertices.

\noindent {\it MSC 2010:} Primary: 22E25; Secondary: 53C30, 53C22.\\
{\it Key Words:} Nilpotent Lie algebras, Heisenberg-like Lie algebra, Closed geodesics, Star graphs.

\end{abstract}

\maketitle

\section{Introduction}

In this paper we study the geometry of 2-step nilpotent Lie algebras arising from finite, simple, directed graphs. The construction of a 2-step nilpotent Lie algebra $\n_G$ from a graph $G$ was introduced by Dani-Mainkar in \cite{DM}. Dani-Mainkar used this construction to study automorphisms of the Lie algebra which give rise to Anosov automorphisms on nilmanifolds. From the construction given in \cite{DM}, there is a natural way to construct an inner product $\metric$ on the 2-step nilpotent Lie algebra, $\n_G$. It is the aim of this paper to describe geometric properties of the simply connected 2-step nilpotent Lie group $N$ with Lie algebra $\{\n_G, \metric\}$ in terms of the graph theoretic properties of $G$.  

We study the geometry of $\{\ngg, \metric\}$ in terms of the $j$ map introduced by Kaplan in \cite{K}. Let $\zz$ denote the center of a 2-step nilpotent Lie algebra $\n$ and let $\V = \zz^{\perp}$. Then for each nonzero $Z\in \zz$, there is a linear, skew-symmetric map $j(Z):\V \rightarrow \V$ defined by $\langle j(Z)X, Y\rangle = \langle [X,Y], Z\rangle$. The $j(Z)$ maps capture both the bracket structure and the metric structure of $\metn$ and are essential in studying the geometry of the associated simply connected  Lie group $N$ and its quotients. In Section \ref{singsect}, we classify the singularity properties of the $j(Z)$ maps of $\n_G$ in terms of the graph $G$. 

In 2000 \cite{GM}, Gornet-Mast introduced Heisenberg-like Lie algebras as a generalization of the well-studied Heisenberg-type Lie algebras. Heisenberg-like Lie algebras are characterized by an abundance of 3-dimensional totally geodesic subalgebras. In \cite{GM}, Gornet and Mast explicitly computed the length spectrum of Heisenberg-like nilmanifolds. In \cite{DDM} a characterization  is given for Heisenberg-like nilpotent Lie groups in terms of the Riemannian curvature tensor. In Section \ref{Heisenbergsect}, Corollary \ref{hlike}, we prove the following result.

\begin{thm*} A 2-step nilpotent Lie algebra $\n_G$ constructed from a connected graph $G$ is Heisenberg-like if and only if the graph $G$ is a star graph or a complete graph on three vertices.
\label{bigthm:StarK3}
\end{thm*}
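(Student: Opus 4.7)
My plan is to handle the two directions separately, using the explicit description of the $j$-maps afforded by the Dani--Mainkar construction and the Heisenberg-like condition of Gornet--Mast, which requires $j(Z)^2 = -|Z|^2\, P_Z$ for every $Z \in \zz$, where $P_Z$ denotes the orthogonal projection of $\V$ onto $(\ker j(Z))^\perp = \operatorname{Im} j(Z)$. Recall that the vertex vectors $\{X_v\}$ form an orthonormal basis of $\V$, the edge vectors $\{Z_e\}$ form an orthonormal basis of $\zz$, and for a directed edge $e$ from $u$ to $w$ one has $j(Z_e) X_u = X_w$, $j(Z_e) X_w = -X_u$, and $j(Z_e) X_v = 0$ for every other vertex $v$. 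In particular $j(Z_e)^2 = -P_e$, where $P_e$ is the orthogonal projection of $\V$ onto $\spn\{X_u, X_w\}$.

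For the sufficiency direction, I would first handle the star with center $v_0$ and leaves $v_1, \ldots, v_{m-1}$ by a direct computation: writing $Z = \sum_{i=1}^{m-1} c_i Z_i$, one checks that $j(Z) X_0 = \sum_i c_i X_i$ and $j(Z) X_k = -c_k X_0$ for $k \geq 1$, so that $\operatorname{Im} j(Z)$ is the $2$-plane $\spn\bigl\{X_0,\, \sum_i c_i X_i\bigr\}$, and inspection of $j(Z)^2$ on this plane shows it equals $-|Z|^2$ times the identity while vanishing on the orthogonal complement. For $K_3$ the argument is even cleaner: $j(Z)$ is a $3 \times 3$ skew-symmetric matrix, so it has rank at most $2$, its nonzero eigenvalues are $\pm i\mu$, and the identity $\operatorname{tr}(j(Z)^2) = -2|Z|^2$ forces $\mu^2 = |Z|^2$, so $j(Z)^2 = -|Z|^2 P_Z$ is automatic.

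For necessity, I would argue by contradiction using a single test element. Assume $\n_G$ is Heisenberg-like and that $G$ contains two disjoint edges $e_1, e_2$ with endpoint sets $\{u,w\}$ and $\{u',w'\}$. Since $\operatorname{Im} j(Z_{e_2}) \subseteq \spn\{X_{u'}, X_{w'}\}$ and $j(Z_{e_1})$ vanishes on that subspace, $j(Z_{e_1})\,j(Z_{e_2}) = 0$; symmetrically $j(Z_{e_2})\,j(Z_{e_1}) = 0$. Taking $Z = Z_{e_1} + Z_{e_2}$ then yields $j(Z)^2 = -(P_{e_1} + P_{e_2})$, whose image is the $4$-dimensional $\spn\{X_u, X_w, X_{u'}, X_{w'}\}$, on which $j(Z)^2$ acts as $-\mathrm{Id}$ rather than as $-|Z|^2\,\mathrm{Id} = -2\,\mathrm{Id}$. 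This contradicts Heisenberg-like, so every pair of edges in $G$ must share a vertex. A short combinatorial argument then completes the proof: fix an edge $uw$; if every other edge is incident to $u$ (or every other edge is incident to $w$), $G$ is a star; otherwise there exist edges $uy$ and $wx$ with $x, y \notin \{u, w\}$, and pairwise-meeting forces $x = y$, producing a triangle on $\{u, w, x\}$; any further edge would have to meet each triangle edge, and straightforward casework rules it out, leaving $G = K_3$.

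The main obstacle I anticipate is making the sufficiency computation for the star fully general: the Heisenberg-like identity must hold for every element of $\zz$, not only for edge generators or pairs, so one has to diagonalize $j(Z)^2$ with arbitrary coefficients $c_i$ in a clean way. Once this is in place and the rank-$2$ trace argument is recorded for $K_3$, the necessity direction follows immediately from the two-edge test above, and the classification of connected graphs in which every pair of edges shares a vertex is a standard combinatorial exercise.
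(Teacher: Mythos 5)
Your sufficiency argument is essentially sound and close to the paper's: for the star you recover the eigenvalues $\{0,\pm i|Z|\}$ exactly as in Example \ref{singex}, and your observation that a $3\times 3$ skew-symmetric $j(Z)$ has rank at most $2$ with $\operatorname{tr}(j(Z)^2)=-2|Z|^2$ is a slicker route to the same eigenvalue list the paper computes directly for $K_3$. Your combinatorial reduction (a connected graph in which every two edges meet is a star or a triangle) is equivalent to the paper's criterion of containing no path of length three on four distinct vertices.

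The genuine gap is in the necessity direction. You take as the Heisenberg-like condition the identity $j(Z)^2=-|Z|^2P_Z$ for every $Z\in\zz$, but that is not the Gornet--Mast characterization. Theorem \ref{GMHlikeThm} requires only that each distinct eigenvalue function satisfy $\vartheta_i(Z)=c_i|Z|$ for some constants $c_i\geq 0$; the constants need not equal $1$ and there may be several of them. Your identity is a singular variant of the H-type condition and is strictly stronger: for instance, a $j$-map with eigenvalues $\pm i|Z|$ and $\pm 2i|Z|$ for all $Z$ is Heisenberg-like but violates it. Consequently your single test element $Z=Z_{e_1}+Z_{e_2}$ does not yield a contradiction. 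The computation $j(Z)^2=-(P_{e_1}+P_{e_2})$ shows only that the unique nonzero eigenvalue of $j(Z)$ is $1=\tfrac{1}{\sqrt{2}}\,|Z|$, which is perfectly consistent with the actual definition (take $m=1$, $c_1=\tfrac{1}{\sqrt{2}}$). To obtain a contradiction you must compare at least two central elements whose eigenvalue-to-norm ratios disagree, e.g.\ $Z_{e_1}$ alone (ratio $1$) against $Z_{e_1}+Z_{e_2}$ (ratio $1/\sqrt{2}$); this is precisely the structure of the paper's Proposition \ref{path3}, which compares $j(Z_1+Z_3)$ with $j(Z_1+Z_2+Z_3)$ along a path of length three. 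The repair is one line, but as written the step ``this contradicts Heisenberg-like'' does not follow from the definition in force.
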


 The remaining results of the paper concern the density of closed geodesics  property (DCG) for nilmanifolds arising from 2-step nilpotent Lie algebras associated to graphs. A simply connected 2-step nilpotent Lie group $N$ is diffeomorphic to $\R^n$. A lattice $\Gamma \subseteq N$ is a discrete cocompact subgroup of $N$. The quotient spaces $\GN$ give the simplest nonabelian examples of a compact quotient of a simply connected Lie group and are, in a sense, similar to tori. Thus the existence of a dense subset of smoothly closed geodesics in $\GN$ is a natural question and we say that if the smoothly closed geodesics in a nilmanifold $\GN$ are dense, then the nilmanifold has the density of closed geodesics property (DCG).

In Section \ref{DCGsect}, Corollary \ref{Cor:DCGPK4}, building on results of \cite{Ma} and \cite{LP} we prove the following result.

\begin{thm*} Let $G$ be a connected graph on four vertices which is not a star graph and let $\n_G$ be the associated 2-step nilpotent Lie algebra. Then for any lattice $\Gamma$ in $N$, $\GN$ has the density of closed geodesics property.
\label{bigthm:4Vertices}
\end{thm*}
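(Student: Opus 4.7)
The plan is a finite case analysis over the five connected simple graphs on four vertices that are not stars, combined with the DCG criteria of \cite{Ma} and \cite{LP}. Up to isomorphism of the underlying undirected graph, these are the path $P_4$, the paw (a triangle with a pendant edge), the four-cycle $C_4$, the diamond $K_4 - e$, and the complete graph $K_4$. A preliminary reduction shows that reversing the orientation of any edge of $G$ induces a Lie algebra isomorphism of $\n_G$ that preserves the inner product up to a sign flip on one central generator, so DCG for $\GN$ is independent of the choice of orientation, and it suffices to handle one chosen orientation of each of the five candidate graphs.

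For each graph I would write down $j(Z)$ in the natural edge basis of $\zz$ and vertex basis of $\V$, use the singularity classification of Section \ref{singsect} to locate the zero set of $\det j(Z)$ in $\zz$, and then apply the DCG criterion from \cite{Ma} and \cite{LP} best suited to that structure. In its strongest form this criterion asks that the nonsingular locus of $j$ in $\zz$ be nonempty and open, and that the eigenvalues of $j(Z)$ for $Z$ in the integer lattice $\Gamma \cap \zz$ satisfy a rationality or commensurability condition. The signed-incidence structure of $j(Z)$ forces its characteristic polynomial to factor well over $\Q$ on the edge basis, so the arithmetic hypothesis should be checkable by a direct computation graph by graph.

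The principal obstacle I expect is the case $G = P_4$. Here $j(Z)$ is a $4 \times 4$ skew tridiagonal matrix whose Pfaffian is a monomial in the two end-edge coordinates of $Z$, so $j(Z)$ becomes singular along two codimension-one hyperplanes through the origin of $\zz$. To handle this, I would compute the eigenvalues of $j(Z)$ explicitly for $Z$ in a rational basis of $\zz$ and verify the required commensurability condition of \cite{LP} by hand; the argument should then extend to the paw graph, whose pendant edge leads to a similar but slightly less symmetric degeneration. The three more highly connected graphs $C_4$, the diamond, and $K_4$ should fall out by analogous but easier computations in which $j(Z)$ is nonsingular on a larger open set. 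Assembling the five cases yields the theorem.
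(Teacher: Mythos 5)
Your overall skeleton matches the paper's: the five non-star connected graphs on four vertices are exactly the ones you list, orientation of edges is irrelevant (the paper records this in a remark), each of these algebras is almost nonsingular (the paper deduces this from the fact that each graph contains a path of length three on four distinct vertices, so the element constructed in Proposition \ref{path3} has $j(Z)$ with four nonzero eigenvalues), and the conclusion is drawn from Theorem \ref{LPDCG}. The gap is in how you propose to verify the remaining hypothesis of that theorem. Lee--Park requires that $j(Z)$ be in resonance for a \emph{dense} set of $Z$ in all of $\zz$; it is not a condition on $\Gamma\cap\zz$ or on lattice points, and your plan to ``compute the eigenvalues of $j(Z)$ explicitly for $Z$ in a rational basis of $\zz$ and verify the required commensurability condition by hand'' cannot work. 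For every one of these five graphs $\dim\V=4$ and the eigenvalues of $j(Z)$ are $\pm\tfrac{i}{2}\sqrt{\alpha\pm\sqrt{\beta}}$ with $\alpha=|Z|^2$ and $\beta=|Z|^4-4a_0^2$, so the ratio of the two eigenvalue moduli is $\sqrt{(\alpha+\sqrt{\beta})/(\alpha-\sqrt{\beta})}$, which is irrational for generic rational $Z$; for instance $Z=Z_1+Z_2+Z_3$ along a path of length three gives ratio $(\sqrt{5}+1)/(\sqrt{5}-1)$. So rational (or lattice) $Z$ are typically \emph{not} in resonance, and the characteristic polynomial ``factoring well over $\Q$'' does not rescue this.

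The missing idea, which occupies Section \ref{pfk4res} of the paper, is to treat the squared ratio $g(Z)=(\alpha+\sqrt{\beta})/(\alpha-\sqrt{\beta})$ as a function on $\zz\cong\R^n$ and show that $\nabla g$ vanishes only on a proper algebraic subvariety (computing $\partial g/\partial a_1$ reduces this to the nonvanishing of an explicit fifth-degree polynomial in the $a_i$, done separately for $K_4$, $G_1$, $G_2$, $C_4$ and $P_4$). Where the gradient is nonzero, $\sqrt{g}$ is a submersion, so the preimage of $\Q$ under $\sqrt{g}$ is dense; this is what produces the dense set of resonant $Z$. Without some argument of this type --- a transversality statement about the eigenvalue-ratio function rather than an arithmetic check at special points --- your case analysis does not close. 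Your identification of where $j(Z)$ degenerates for $P_4$ (Pfaffian equal to the product of the two end-edge coordinates) is correct but addresses only the nonsingularity hypothesis, which is the easy half.
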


In \cite{E}, Eberlein introduced the first hit map approach, outlined in subsection \ref{subsec:FirstHitApproach}, to demonstrate DCG for any lattice in 2-step nilpotent groups with one dimensional center. This approach was used in \cite{Ma}, \cite{LP}, \cite{DeM}, to show certain infinite classes of Lie groups $N$ have the DCG property for any lattice in $N$. In \cite{DC}, the first hit map approach was used to prove DCG for an infinite class of lattices in Lie groups constructed from irreducible representations of  simple compact Lie groups.  In Theorem \ref{ThreePathMaxRankProp} and Corollary \ref{ThreePathMaxRankCor}, the first hit map approach is applied to show the DCG property holds for any lattice in the simply connected Lie group whose Lie algebra is $\n_G$, where $G$ is the path on 3 vertices. These results give the following theorem.

\begin{thm*} Let $G$ be the path on three vertices and let $\n_G$ be the metric 2-step nilpotent Lie algebra associated to $G$.  Then for any lattice $\Gamma$ in $N$, $\GN$ has the density of closed geodesics property.
\label{bigthm:3path}
\end{thm*}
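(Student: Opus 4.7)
The plan is to execute the first hit map approach outlined in Subsection \ref{subsec:FirstHitApproach}, extending the techniques of \cite{E}, \cite{Ma}, and \cite{LP} to the two-dimensional center setting that arises here. With $G$ the path on three vertices and directed edges $v_1 \to v_2$, $v_2 \to v_3$, the Lie algebra $\n_G$ has orthonormal basis $\{X_1, X_2, X_3\}$ for $\V$ and $\{Z_1, Z_2\}$ for $\zz$, with nontrivial brackets $[X_1, X_2] = Z_1$ and $[X_2, X_3] = Z_2$. A direct computation shows that for $Z = \alpha Z_1 + \beta Z_2$, the map $j(Z)$ on $\V$ has matrix
\[
\begin{pmatrix} 0 & -\alpha & 0 \\ \alpha & 0 & -\beta \\ 0 & \beta & 0 \end{pmatrix},
\]
with one-dimensional kernel $\Span\{\beta X_1 + \alpha X_3\}$ and nonzero eigenvalues $\pm i\sqrt{\alpha^2+\beta^2}$. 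In particular $j(Z)$ is singular for every nonzero $Z$, consistent with the classification in Section \ref{singsect}.

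Next I would derive the closure conditions. A unit-speed geodesic from the identity in $N$ with initial velocity $X_0 + Z_0 \in \V \oplus \zz$ projects to a smoothly closed loop in $\GN$ of period $T$ if and only if its endpoint lies in $\Gamma$, which the 2-step exponential formula translates into coupled vector equations in $(X_0,Z_0,T)$ parameterized by a lattice element $\log\gamma^* = V^* + Z^* \in \log\Gamma$. Decomposing the $\V$-equation via the eigenspace decomposition of $j(Z_0)$ yields two pieces: on the two-dimensional image, the integrand $e^{s\,j(Z_0)}X_0$ oscillates and the closure condition is satisfiable by adjusting the phase of $X_0$ and the period $T$; on the kernel direction, the integrand is constant, so closure imposes a rational commensurability between the kernel component of $X_0$ and that of $V^*$. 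The $\zz$-equation further couples $T\,Z_0$ and a quadratic integral in $X_0$ to $Z^*$.

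To finish, I would apply the first hit map on a cross-section $\mathcal{S}\subset\zz$ associated to the central torus of $\Gamma$: fixed and periodic points of the return map correspond precisely to smoothly closed geodesics, and by a Dirichlet-type simultaneous approximation in the $(\alpha,\beta)$-plane one shows that the set of solvable $(X_0,Z_0,T)$ projects to a dense subset of $\GN$. The main obstacle is the coupling between the singular direction and the parameter $Z_0$: since $\ker j(Z_0) = \Span\{\beta X_1 + \alpha X_3\}$ rotates with $(\alpha,\beta)$, the rational-approximation condition for the linear drift along the kernel cannot be decoupled from the periodic closure on the image of $j(Z_0)$. Overcoming this requires showing that as $(\alpha,\beta)$ varies in a neighborhood, the corresponding kernel lines sweep out a dense family of directions in the $X_1X_3$-plane, so that for each lattice element $\gamma^*$ one can choose a nearby $Z_0$ matching the kernel component of $V^*$; combined with the generic solvability of the periodic piece, this yields the DCG property for $\GN$.
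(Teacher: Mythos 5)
Your setup is correct and you have identified the right general strategy (the first hit map of Subsection \ref{subsec:FirstHitApproach}), but the proposal stops short of the one step that actually carries the argument. For a singular algebra with a dense set of resonant vectors, what must be proved is that the first hit map $F_Z:\u_Z\rightarrow \w_Z$, $F_Z(\xi)=\log(\gamma_\xi(\om))$, has maximal rank on an open dense subset of $\u_Z$; this is the content of Proposition \ref{ThreePathMaxRankProp}, and it requires explicitly computing $F_Z$ from the geodesic equations of Proposition \ref{geoeqns} in the adapted basis $\{\eta_1,\eta_2,\eta_3\}$ ($\eta_1$ spanning $\ker j(Z)$) and then checking that $dF_\xi$ has exactly a one--dimensional kernel, which holds precisely when the coefficients $b_1$ of $\eta_1$ and $b_2$ of $\eta_2$ are nonzero. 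Your phrases ``generic solvability of the periodic piece'' and ``Dirichlet-type simultaneous approximation in the $(\alpha,\beta)$-plane'' stand in for this computation but do not supply it; without the rank statement there is no reason the image of an open set of initial velocities under $F_Z^m=mF_Z$ must eventually contain a lattice point, and the whole mechanism of Theorem \ref{MaxRankImpliesDCGPThm} does not engage.

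The ``main obstacle'' you describe, and your proposed cure for it, are also not the right mechanism. You suggest varying $Z_0$ so that the rotating kernel line $\ker j(Z_0)$ matches the $\V$-component of a prescribed lattice element $\gamma^*$. But changing $Z_0$ simultaneously changes the period $\om=2\pi/|Z_0|$, the eigenspace decomposition of $\V$, and the quadratic $\zz$-part of the endpoint, and matching one lattice element at a time does not yield density of tangent vectors to closed geodesics in $T(\GN)$. The established device is the opposite: fix $Z$ in the dense set $\zz_R\cap\zz_\Gamma$ for which $\w_Z=\zz\oplus\ker j(Z)$ is a \emph{rational} subalgebra with respect to $\Gamma$ (dense by Lemma 16 of \cite{DeM}), so that $\Gamma\cap\exp(\w_Z)$ is a lattice in $\exp(\w_Z)$; then maximal rank of $F_Z$ plus the scaling $F_Z^m=mF_Z$ forces $F_Z^m(\O)\cap\log\Gamma\neq\emptyset$ for every open $\O$ and large $m$, and density in $\n$ follows from taking the union of the sets $\u_Z$ over this dense family of $Z$. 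You should replace the cross-section/return-map language and the kernel-matching argument with the explicit rank computation; that is where all the work in this case lies.
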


In subsections \ref{stargraphs} and \ref{K3graphSubsection} we present two cases where the traditional first hit map approach fails, namely for $\n_G$ arising from the graphs $G=K_{1,n}$, the star graph on $n+1$ vertices, and from $G=K_3$, the complete graph on three vertices. Note that these are precisely the graphs which give rise to Heisenberg-like $\n_G$. In these cases, a direct approach is given. Combining Theorems \ref{DCGforStar4orMore} and \ref{thm:K3DCGP}, we prove the following result.

\begin{thm*} Let $\n_G$ be a 2-step nilpotent Lie algebra constructed from a graph $G$ where $G$ is either a star graph or a complete graph on three vertices. Then there exists a lattice $\Gamma \subseteq N$ such that $\GN$ has the density of closed geodesics property. 
\label{bigthm:K3orStar}
\end{thm*}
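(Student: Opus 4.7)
The two graphs in question are precisely the ones whose Lie algebras $\n_G$ are Heisenberg-like (Theorem~\ref{bigthm:StarK3}), so in each case the map $j(Z)$ has, for every nonzero $Z\in\zz$, a single nonzero squared eigenvalue $-|Z|^2$; this uniformity is what makes the geodesic flow explicitly integrable and lets us sidestep the failure of the first-hit-map method. My plan is to treat $G=K_{1,n}$ and $G=K_3$ in parallel using this structure.

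In both cases I would take $\Gamma=\exp(L)$, where $L\subseteq\n_G$ is the $\Z$-span of the orthonormal vertex-basis of $\V$ together with the edge-basis of $\zz$ coming from the Dani--Mainkar construction. Since all structure constants lie in $\{0,\pm 1\}$ in this basis, $L$ is a rational subalgebra and $\Gamma$ is a uniform lattice in $N$ by Malcev's theorem. The $j(Z)$ maps are easy to write down: for $K_{1,n}$, labelling the center vertex by $X_0$ and putting $Z=\sum c_iZ_i$, one computes $j(Z)X_0=\sum c_iX_i$ and $j(Z)X_i=-c_iX_0$, so $j(Z)$ has rank two with eigenvalues $\pm i|Z|$ on the plane $\Span\{X_0,\,j(Z)X_0\}$ and an $(n-1)$-dimensional kernel inside $\V$; for $K_3$ the matrix of $j(Z)$ is $3\times 3$ skew-symmetric of rank two with the same nonzero spectrum.

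Using the standard formulas for geodesics in a 2-step nilpotent Lie group (cf.~\cite{E}), a geodesic from $e$ with initial velocity $X_0+Z_0$ decomposes according to the $j(Z_0)$-eigenspace splitting of $\V$: it is helical with period $2\pi/|Z_0|$ on the image of $j(Z_0)$, affine on $\ker j(Z_0)$, plus a quadratic drift in $\zz$. To produce a smoothly closed geodesic of period $T$ I would impose: (i) $T|Z_0|\in 2\pi\Z$, closing the helical component with matching velocity; (ii) the $\ker j(Z_0)$-component of $TX_0$ lies in $L\cap\V$; (iii) the central endpoint $z(T)$, including the quadratic correction coming from the helical part, lies in $L\cap\zz$. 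Each is a rational (in fact affine) condition on $(X_0,Z_0,T)$, and an elementary simultaneous Diophantine-approximation argument shows that the set of initial velocities satisfying (i)--(iii) for some $T>0$ is dense in $T_{e\Gamma}(\GN)\cong\V\oplus\zz$. Given any $\bar q\in\GN$, Hopf--Rinow provides a geodesic from $e\Gamma$ to $\bar q$ with some initial velocity $v_0$; approximating $v_0$ by closed-geodesic initial velocities and invoking continuous dependence of geodesics on initial data shows that $\bar q$ lies in the closure of the union of smoothly closed geodesics, giving DCG.

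The main obstacle I anticipate is condition (iii): the quadratic central correction couples $X_0$ to $Z_0$, so rationality of the endpoint is not simply a condition on $X_0$ after $Z_0$ is fixed but an inhomogeneous condition whose coefficients depend on $Z_0$. The Heisenberg-like identity $j(Z)^2=-|Z|^2\operatorname{Id}$ on $\operatorname{image}(j(Z))$ (guaranteed by Theorem~\ref{bigthm:StarK3}) reduces the quadratic term to the familiar Heisenberg ``symplectic area'' form, after which the approximation argument runs cleanly; absent the Heisenberg-like hypothesis the cross-frequency terms would be considerably messier, which matches the authors' remark that $K_{1,n}$ and $K_3$ are genuinely special cases.
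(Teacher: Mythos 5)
Your overall strategy is the same as the paper's: exploit the Heisenberg-like spectrum $\{0,\pm i|Z|\}$, write the geodesic explicitly via Eberlein's equations, choose a concrete lattice built from the graph basis, and impose rational closing conditions. But there is a genuine gap at the one place where the real work lives, and your lattice normalization makes that step fail as stated. The translation period for a geodesic with central component $rZ$ is $\om = 2\pi m/|rZ|$, so the first hit point is $\log\gamma_\xi(m\om) = \frac{2\pi m}{|rZ|}\,Y$, where $Y$ has coordinates that are \emph{rational} functions of the $a_i$, $b_i$, $r$, $1/|Z|^2$. If one takes all of these data rational with $|Z|\in\Q$, the hit point lies in $2\pi\,\spn_\Q\{\beta\}$ because of the explicit $2\pi$ in $\om$; it therefore lands (for suitable $m$) in the lattice $\exp(\spn_{2\pi\Z}\{\beta\})$ used in the paper, but \emph{not} in your lattice $\exp(\spn_\Z\{\beta\})$. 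Trying to repair this by rescaling $Z$ so that $|Z|\in 2\pi\Q$ pushes factors of $2\pi$ and $4\pi^2$ into the quadratic central terms (e.g.\ the $b_n^2/(2r^2|Z|^2)$ coefficient) in incompatible ways, so the conditions do not close up for the $\Z$-span lattice with any obvious choice of rational data. The $2\pi$ in the paper's $\Lambda=\spn_{2\pi\Z}\{\beta\}$ is not cosmetic.

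Second, your conditions (i)--(iii) and the claim that ``an elementary simultaneous Diophantine-approximation argument'' makes them dense is precisely the step that needs proof, and you have not supplied it. The paper's version of this step consists of (a) the explicit computation of $F_Z(X+rZ)$ showing every coefficient is polynomial/rational in $a_i,b_i,r$, and (b) a lemma, proved by stereographic projection, that $\{\mathbf v\in\Q^n : |\mathbf v|\in\Q\}$ is dense in $\R^n$, which is what makes the set of admissible $Z$ (rational direction \emph{and} rational norm, so that $\om\cdot(\text{rational})$ is $2\pi$ times a rational) dense in $\zz$. Without (b) the simultaneous satisfiability of your conditions is not obvious, since (i) constrains $|Z_0|$ while (ii) and (iii) constrain the coordinates of $Z_0$, and these interact. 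Finally, your closing appeal to Hopf--Rinow only yields that the \emph{union of the images} of closed geodesics is dense in $\GN$, whereas the DCG property requires density of their tangent vectors in the unit tangent bundle; the correct bridge is the homogeneity argument (Lemma 13 of \cite{DeM}) reducing density in $T(\GN)$ to density of translated initial velocities in $\n$.
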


\section{Notation and Definitions}

Let $\metn$ be a 2-step nilpotent Lie algebra with a left invariant metric and $N$  the simply-connected, 2-step nilpotent metric Lie group with Lie algebra $\n$.  Let $\zz$ denote the center of $\n$ and let $\V$ denote the orthogonal complement of $\zz$.

\begin{defn}\label{jZdefn} {\rm{ For each nonzero $Z\in\zz$ define a skew symmetric linear transformation $j(Z):\V\rightarrow \V$ by $\langle [X,Y],Z\rangle=\langle j(Z)X,Y\rangle$ for all $X,Y\in\V$ and $Z\in\zz$. }}\end{defn}

The maps $j(Z)$ encode the metric and algebraic data of $\metn$. Hence they can be used to describe the geometry of $N$.  

Let $G$ be a finite, simple, directed graph. Notation for undirected graphs is used throughout, with direction of edges specified when necessary. For graph theoretic notation and background, see \cite{B}. We use the following standard notation from graph theory.

\begin{defn}{\rm {
A {\it complete graph} on $n$ vertices, denoted $K_n$, is the graph which has an edge between every pair of distinct vertices. A {\it complete bipartite graph}, denoted $K_{m,n}$ is a graph whose vertex set is the union of two disjoint subsets $S_1$ and $S_2$ with $|S_1|=m$ and $|S_2|=n$ where every pair of vertices in different subsets is adjacent and no pair of vertices from the same subset is adjacent. A {\it star graph} is a complete bipartite graph where one partition has order 1, denoted $K_{1,n}$. }}
\end{defn}

 Assume that $G$ is a directed graph with at least one edge.   Denote the vertices of $G$ by $S=\{X_1, \ldots X_m\}$, the edges of $G$ by $E=\{Z_1, \ldots, Z_q\}$, and denote $G=G(S,E)$ when needed for clarity. From $G$, we construct a 2-step nilpotent Lie algebra $\n_G$ as follows. The Lie algebra $\n_G$ is the vector space direct sum $\n_G= \V \oplus \zz$ where we let $E$ be a basis over $\R$ for $\zz$ and $S$ be a basis over $\R$ for $\V$. Define the bracket relations among elements of $S$ according to adjacency rules: if $Z_k$ is a directed edge from vertex $X_i$ to vertex $X_l$ then define the skew-symmetric bracket $[X_i, X_l]=Z_k$. If there is no edge between two vertices, then define the bracket of those two elements in $S$ to be zero. Extend the bracket relation to all of $\V$ using bilinearity of the bracket. Choose the inner product on $\n_G$ so that $S\cup E$ is an orthonormal basis for $\n_G$. Observe that if $Z_k$ is a directed edge from $X_i$ to $X_l$ then $j(Z_k)X_i=X_l$ .  Note that $j(Z_k)X_p=0$ for any other $X_p\in S$ where $p\neq i,l$. The maps $j(Z):\V\rightarrow \V$  are then given by definition \ref{jZdefn} and are linear over $Z\in \zz$.  We use the notation $\n$ for $\n_G$ when the notation is clear.

\begin{ex} Let $G$ be the directed graph with vertices $\{X_1, X_2, X_3, X_4\}$ and edges $\{Z_1, Z_2, Z_3\}$ as in Figure \ref{graphk13}. Construct  the metric, 2-step nilpotent Lie algebra $\n_G$ from the graph $G$ as described above. Then $\n_G$ has orthonormal basis $\{X_1, X_2, X_3, X_4, Z_1, Z_2, Z_3\}$ and skew symmetric bracket relations determined by $[X_1, X_2]=Z_1$, $[X_1, X_3]= Z_2$, $[X_1, X_4]=Z_3$ with $[X_i, X_j]=0$ when vertices $X_i,X_j$ are not adjacent. 

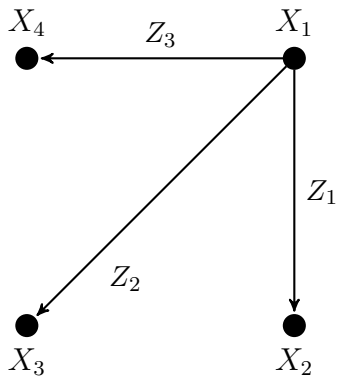
\begin{figure}[h!]
\begin{tikzpicture}[->,>=stealth',shorten >=1pt,auto,
  thick,vertex/.style={circle,draw,fill,scale=.7,font=\sffamily\Large\bfseries},node distance=2in,thick]

\node[vertex, label=above:{$X_4$}](X4) {};
  \node[vertex, label=below:{$X_3$}](X3) [below of=X4] {};
  \node[vertex, label=above:{$X_1$}] (X1) [right of=X4]  {};
  \node[vertex, label=below:{$X_2$}] (X2)[below of=X1]  {};
%
%\path[->] (X1) edge node[midway,right]{$Z_1$} (X2);
%\path[->] (X1) edge node[midway,below right]{$Z_2$} (X3);
%\path[->] (X1) edge node[midway,above]{$Z_3$} (X4);

  \path[every node/.style={font=\sffamily\small}]
    (X1) edge node {$Z_1$}  (X2)
		     edge node [near end] {$Z_2$}  (X3)
				%edge node  {$Z_2$}  (X3)
				 edge node [above] {$Z_3$}  (X4);
\end{tikzpicture}
\caption{Graph $K_{1,3}$}
\label{graphk13} 
\end{figure}

%\begin{figure}[h!]
%\centering
%\begin{subfigure}[b]{.3\textwidth}
%\includegraphics[width=\textwidth]{4starsubkfour}
%\end{subfigure}
%\caption{Graph $K_{1,3}$.}
%\label{graphk13} 
%\end{figure}
\end{ex}

\begin{remark} 
\begin{enumerate}
\item If $X_i$ and $X_k$ are adjacent vertices in a directed graph $G$, the edge from vertex $X_i$ to vertex $X_k$ may be denoted $X_iX_k$. 
\item Let $G=(S,E)$ be a directed graph with an edge $Z$ from $X$ to $Y$.  Then in the metric 2-step nilpotent Lie algebra $\n_G$, we have $[X,Y]=Z$.  Let $G'=(S,E')$ be the directed graph that is identical to $G$ except that the direction of the edge from $X$ to $Y$ has been reversed, resulting in the metric Lie algebra $\n_{G'}$ where all brackets among the standard basis elements are the same as in $\n_G$ except $[X,Y]=-Z$.  Then the Lie algebras $\n_G$ and $\n_{G'}$ are isometric.
\end{enumerate}
\end{remark}

\section{Singular, Nonsingular and Almost Nonsingular Lie Algebras}  \label{singsect}

In this section we classify 2-step nilpotent Lie algebras arising from graphs as singular, nonsingular or almost nonsingular  based on the properties of the graph. These are algebraic, not metric, properties of a 2-step nilpotent Lie algebra. 

\begin{defn} \label{singnonalmost}
{\rm A  2-step nilpotent Lie algebra $\n=\V\oplus\zz$  is {\em singular } if $j(Z)$ is singular for every nonzero $Z\in\zz$.  The Lie algebra $\n$ is {\em almost nonsingular} if $j(Z)$ is nonsingular for every $Z$ in an open dense subset of $\zz$.  The Lie algebra $\n$ is {\em nonsingular} if $j(Z)$ is nonsingular for every nonzero $Z\in\zz$.}
\end{defn}

\begin{lemma}[\cite{GM} Lemma 1.16]  
Every 2-step nilpotent Lie algebra is nonsingular, almost nonsingular or singular. \label{GMsing}
\end{lemma}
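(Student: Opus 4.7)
The plan is to reduce the trichotomy to whether a single polynomial on $\zz$ vanishes identically. By Definition \ref{jZdefn}, the assignment $Z\mapsto j(Z)$ is $\R$-linear, so after fixing any basis of $\zz$ and an orthonormal basis of $\V$, each entry of the matrix of $j(Z)$ is a linear function of the coordinates of $Z$. Consequently the function $p:\zz\to\R$ defined by $p(Z)=\det j(Z)$ is a polynomial in those coordinates.

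With $p$ in hand, I would split into two cases. If $p\equiv 0$, then $j(Z)$ is singular for every $Z\in\zz$, so $\n$ is singular in the sense of Definition \ref{singnonalmost}. Otherwise $p$ is a nonzero real polynomial on the finite-dimensional real vector space $\zz$, and the set $U=\{Z\in\zz:j(Z)\text{ is nonsingular}\}$ is the complement of the real algebraic subset $\{p=0\}$. Invoking the standard fact that the zero locus of a nonzero polynomial in finitely many real variables is closed with empty interior, $U$ is open and dense in $\zz$. This already shows $\n$ is almost nonsingular; if, in addition, $U\supseteq\zz\setminus\{0\}$ then $\n$ is nonsingular. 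So every $\n$ falls into at least one of the three classes.

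The only nontrivial ingredient is the topological claim that the zero set of a nonzero real polynomial has empty interior. I would dispense with it quickly by induction on $\dim\zz$: restricting $p$ to a suitable affine line on which it does not vanish identically produces a one-variable polynomial with only finitely many zeros, so every Euclidean ball in $\zz$ must meet $\{p\neq 0\}$. No structure of $\n$ beyond the linearity of $j$ in $Z$ is used, and there is no serious obstacle — the content of the lemma is essentially the observation that singularity of $j(Z)$ is governed by a single polynomial condition in the coordinates of $Z$.
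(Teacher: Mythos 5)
Your argument is correct: since $Z\mapsto j(Z)$ is linear, $\det j(Z)$ is a polynomial on $\zz$, and a nonzero real polynomial has a zero set with empty interior, so the locus where $j(Z)$ is nonsingular is either empty (singular case) or open and dense (almost nonsingular, possibly all of $\zz\setminus\{0\}$, i.e.\ nonsingular). The paper does not reprove this lemma but imports it from \cite{GM}; your determinant-polynomial argument is exactly the standard proof underlying the cited result, and it is consistent with the Pfaffian argument the paper uses later in Proposition \ref{AlmostnonsigularCriterion}.
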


Given a metric, 2-step nilpotent Lie algebra $\n$, if two nonzero elements $Z, Z' \in \zz$ can be found such that $j(Z)$ is nonsingular and $j(Z')$ is singular, then the Lie algebra $\n$ is almost nonsingular. 

\begin{lemma} \label{K2nonsinglemma} Let $G$ be a graph with at least one edge. The Lie algebra $\n_G$ is nonsingular if and only if $G=K_2$, the complete graph on two vertices.

%Among graphs with at least 1 edge, the complete graph on 2 vertices, $K_2$,  is the only graph with corresponding to a nonsingular 2-step nilpotent Lie algebra.
\end{lemma}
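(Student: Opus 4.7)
The plan is to prove the two directions separately, using the very explicit description of $j(Z_k)$ that comes from the edges of $G$.

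For the forward direction, suppose $G=K_2$ with vertex set $\{X_1,X_2\}$ and single directed edge $Z_1$ from $X_1$ to $X_2$. Then $\V=\Span\{X_1,X_2\}$, $\zz=\Span\{Z_1\}$, and an arbitrary nonzero $Z\in\zz$ has the form $Z=cZ_1$ with $c\ne 0$. By the construction, $j(Z_1)X_1=X_2$ and $j(Z_1)X_2=-X_1$, so the matrix of $j(Z)$ in the basis $\{X_1,X_2\}$ is $c\bigl(\begin{smallmatrix}0 & -1\\ 1 & 0\end{smallmatrix}\bigr)$, which has determinant $c^2\ne 0$. Hence $\n_G$ is nonsingular.

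For the reverse direction, I will show the contrapositive: if $G$ is not $K_2$, then there is a nonzero $Z\in\zz$ with $j(Z)$ singular. First, if $G$ contains an isolated vertex $X_i$, then by construction $j(Z)X_i=0$ for every $Z\in\zz$, so $j(Z)$ is singular for every such $Z$ and $\n_G$ is in fact singular. Thus we may assume every vertex of $G$ lies on some edge. Now suppose $G$ has at least three vertices, and pick any edge $Z_k$, say directed from $X_i$ to $X_l$. From the bracket relations, $j(Z_k)X_i=X_l$, $j(Z_k)X_l=-X_i$, and $j(Z_k)X_p=0$ for every other basis vertex $X_p$. Since $|S|\ge 3$, there exists such a vertex $X_p$, so $X_p\in\ker j(Z_k)$ and $j(Z_k)$ is singular.

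It remains to observe that a simple directed graph on exactly two vertices with at least one edge is $K_2$ (up to the direction of that single edge), and by the remark recorded in the Notation and Definitions section, reversing the direction of an edge yields an isometric metric Lie algebra, so we may identify $G$ with $K_2$ in the undirected sense.

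The argument is largely routine once the explicit formulas for $j(Z_k)$ on basis vectors are written down; the only mild subtlety is to handle the isolated-vertex case separately so that the rank-bound argument on $j(Z_k)$ applies cleanly, which is the main obstacle to a one-line proof.
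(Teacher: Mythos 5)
Your proof is correct and follows essentially the same route as the paper: an explicit computation of $j(Z)$ for $K_2$ in one direction, and in the other the observation that any graph other than $K_2$ with an edge contains an edge $Z_k$ and a vertex $X_p$ not incident to it, so $j(Z_k)X_p=0$. Your separate treatment of isolated vertices is harmless but unnecessary, since the edge-plus-nonincident-vertex argument already covers every graph on at least three vertices.
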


\begin{proof}
The Lie algebra $\n_{K_2}$ is the 3-dimensional Heisenberg Lie algebra $\{X_1,X_2,Z\}$, which is nonsingular. 

Assume that $G$ has at least one edge. If $G$ is a graph which is not isomorphic to $K_2$ then $G$ contains an edge $Z$ and a vertex $X$ for which the edge $Z$ is not incident to the vertex $X$. Then $j(Z)X = 0$, hence $\n_G$ is either almost non-singular or singular. 
\end{proof}

Let $G=(S,E)$ be a graph with $|S|>1$ odd, then $\n_G$ is singular.  For each nonzero $Z\in \zz$, the map $j(Z)$ is skew-symmetric on an odd dimensional vector space $\V$. Therefore  $0$ is  an eigenvalue of $j(Z)$ for all $Z\in \zz$ and hence $\ngg$ itself is singular.

If $|S|$ is even and greater than 2 for a graph $G$, the associated 2-step nilpotent Lie algebra $\n_G$ will be either  singular or almost nonsingular by  Lemmas \ref{GMsing} and \ref{K2nonsinglemma}.  Let $E=\{Z_1,\dots,Z_k\}$.  By construction of the 2-step nilpotent Lie algebra from the graph, if $|S|>2$, $j(Z_i)$ is singular for all $i=1,\dots,k$.  However, it may be possible, depending on the structure of $G$, to find a $Z\in\zz$ such that $j(Z)$ is nonsingular.  In this case the resulting Lie algebra $\ngg$ is almost nonsingular.  We provide an example of a singular 2-step nilpotent Lie algebra constructed from a graph in Example \ref{singex} and an example of an almost nonsingular 2-step nilpotent Lie algebra constructed from a graph in Example \ref{ansingex}. Proposition \ref{AlmostnonsigularCriterion} gives a necessary and sufficient condition on a graph $G$ in order for $\n_G$ to be almost nonsingular. 

\begin{ex} \label{singex} \textbf{Star Graphs.} Let $K_{1,n}=(S,E)=\left(\{X_i\},\{Z_k\}\right)$, $i=1,\dots,n+1$, $k=1,\dots,n$, be the star graph on $n+1$ vertices, $n>1$, such that $X_1$ is adjacent to $X_i$ for $i=2,\dots ,n+1$ but no other vertices are adjacent. This graph is the complete bipartite graph on $n+1$ vertices partitioned into two sets $\{X_1\}$ and $\{X_2,\dots, X_{n}\}$.

\begin{figure}[h!]
\begin{tikzpicture}[->,>=stealth',shorten >=1pt,auto,
  thick,vertex/.style={circle,draw,fill,scale=.7,font=\sffamily\Large\bfseries},node distance=1.5in,thick]

\node[vertex, label=above:{$X_2$}](X2) {};
  \node[vertex, label=above:{$X_3$}] (X3) [right of =X2] {};
  \node[vertex, label=above:{$X_4$}] (X4)[right of=X3]  {};
\node[draw=none] (a)  [right of=X3] [label= {\hspace{1cm}$\cdots \cdots$}] {};
\node[vertex, label=above:{$X_{n-1}$}](Xn-1)[right of=a]{};
\node[vertex, label=above:{$X_{n}$}] (Xn)[right of = Xn-1]{};

\node[vertex, label=above:{$X_{n+1}$}](Xn+1)[right of = Xn]{};
\node[vertex, label=below:{$X_1$}](X1) [below of = a]{};

  \path[every node/.style={font=\sffamily\small}]
   (X1) edge node[above left] {$Z_1\hspace{.8cm}$}  (X2)
		     edge node [above left]{$Z_2\hspace{.4cm}$}  (X3)
				 edge node [above left] {$Z_3$}  (X4)
				 edge node[above left]{$Z_{n-1}\hspace{-.5cm}$}  (Xn)
		     edge node[above left]{$Z_{n}\hspace{-.6cm}$} (Xn+1)

			edge node [above left] {$Z_{n-2}\hspace{-.3cm}$}  (Xn-1);
\end{tikzpicture}
\caption{Star Graph}
\label{Star Graph} 
\end{figure}
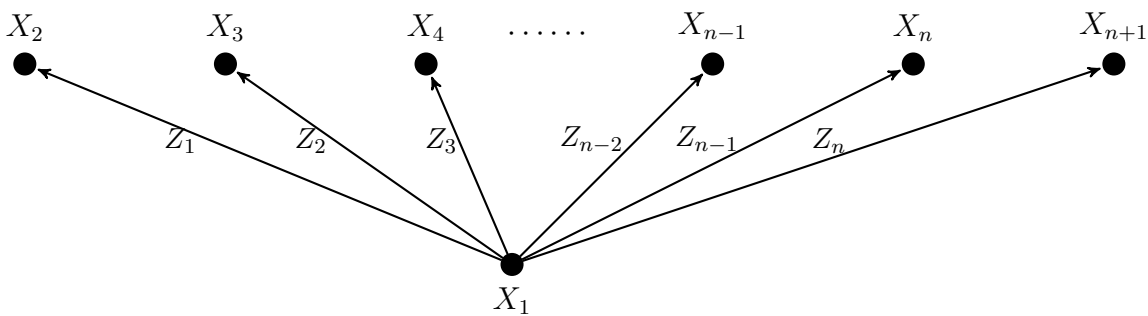

The Lie algebra is defined by the brackets $[X_1,X_i]=Z_{i-1}$ for $i=2,\dots,n+1$.  Clearly, for each $i=1,\dots,n$, $j(Z_i)X_k=0$ for $k=2,\dots,n+1$ except $k=i+1$; thus $j(Z_i)$ is singular for each $i=1,\dots,n$.  Letting $Z=a_1Z_1+\cdots+a_{n}Z_{n}$, we find the matrix representation of $j(Z)$: $$j(Z)=\left(\begin{array}{ccccc} 0 & -a_1 & -a_2 & \cdots & -a_{n} \\ a_1 & 0 & 0 & \cdots & 0 \\ a_2 & 0 & 0 & \cdots & 0 \\ \vdots & \vdots & \vdots & \vdots & \vdots \\ a_{n} & 0 & 0 & \cdots & 0
\end{array}\right).$$ 
From this it is easy to calculate the eigenvalues of $j(Z)$ to be $\{0,\pm i\sqrt{a_1^2+a_2^2+\cdots a_{n}^2}\} = \{0, \pm i |Z|\}$, proving that $j(Z)$ is in fact singular for all $Z\in \zz$ and therefore the associated 2-step nilpotent Lie algebra $\n_{K_{1,n}}$ is also singular.
\end{ex}

\begin{ex} \label{ansingex} \textbf{Even Cycles.} Let $C_{2n}=(S,E)=\left(\{X_i\},\{Z_i\}\right)$,  $i=1,\dots,2n$ be the cycle on $2n$ vertices with $n>1$.  Label the directed graph such that the edge $Z_i$ joins $X_i$ to $X_{i+1}$.  Thus the Lie algebra structure on $\n_{C_{2n}}$ is determined by the brackets $[X_i,X_{i+1}]=Z_i$, $i=1,\dots,2n-1$ and $[X_{2n},X_1]=Z_{2n}$.    Construct a proper spanning subgraph $H=(S_H,E_H)\subset C_{2n}$ such that $S_H=S$ but $E_H=\{Z_i\}\subset E_{C_{2n}}$ for only odd  $i$. %(this might not be necessary here, but maybe motivates the idea that I try to prove later about having a spanning graph that is $n$ copies of $K_2$).  
Then, let $Z=a_1Z_1+a_3Z_3+\cdots+a_{2n-1}Z_{2n-1}$, $a_1, \ldots, a_{2n-1} \in \R$.   Using the matrix representation of $j(Z)$: $$j(Z) = \left(\begin{array}{cccccccc} 0 & a_1 & 0 & 0 & 0 & \cdots & 0 & 0 \\  -a_1 & 0 & 0 & 0 & 0 & \cdots & 0 & 0 \\ 0 & 0 & 0 & a_3 &  0 & \cdots & 0 & 0 \\0 & 0 & -a_3 & 0 & 0 & \cdots & 0 & 0 \\ \vdots & \vdots & 0 & 0 & \ddots & \ddots & \vdots & \vdots \\ 0 & 0 & \cdots & \cdots & \cdots & \cdots & 0 & a_{2n-1} \\ 
0 & 0 & \cdots & \cdots & \cdots & 0 & -a_{2n-1} & 0 \end{array}\right),$$ we find the eigenvalues of $j(Z)$ to be $\{\pm ia_1,\pm ia_3,\dots,\pm ia_{2n-1}\}$.   Then, for any $Z\in\zz$, as above, with $a_i\neq 0$ for all $i$, $j(Z)$ is nonsingular.  Thus $\n_{C_{2n}}$ is almost nonsingular.
\end{ex}

\begin{remark} Any graph $G$ on $2n$ vertices, $n>1$, that contains $C_{2n}$ as a subgraph has an almost nonsingular associated 2-step nilpotent Lie algebra $\ngg$.  To show that $\ngg$ contains a nonsingular $Z\in\zz$, we can use the same construction as in Example \ref{ansingex}. This same technique is used to prove Proposition \ref{AlmostnonsigularCriterion}.
\end{remark}

We now prove a proposition that will allow us to restrict our attention to connected graphs.  It will also help classify all graphs that are associated to almost nonsingular Lie algebras.

\begin{prop}
Let $G=(S,E)$ be a graph with more than one connected component.  Then the associated 2-step nilpotent Lie algebra $\ngg$ is either singular or almost nonsingular.  Further  $\n_G$ is almost nonsingular if and only if each connected component is either nonsingular or almost nonsingular.
\label{almostnsconn}
\end{prop}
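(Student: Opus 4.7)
The plan is to exploit the block-diagonal structure that $j(Z)$ inherits from the decomposition of $G$ into connected components. Writing $G = G_1 \sqcup \cdots \sqcup G_k$ with $k \geq 2$, and letting $S_i, E_i$ denote the vertices and edges of $G_i$, I set $\V_i = \Span(S_i)$ and $\zz_i = \Span(E_i)$, so that $\V = \V_1 \oplus \cdots \oplus \V_k$ and $\zz = \zz_1 \oplus \cdots \oplus \zz_k$ are orthogonal decompositions. The key first step is to verify that for $Z = Z_1 + \cdots + Z_k$ with $Z_i \in \zz_i$, the map $j(Z)$ preserves each $\V_i$ and restricts there to $j_{G_i}(Z_i)$. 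This falls out of the defining identity $\langle j(Z)X, Y\rangle = \langle [X,Y], Z\rangle$: for $X \in \V_i$ and $Y \in \V_l$ with $i \ne l$, $[X,Y] = 0$ because no edges link different components, and for $X,Y \in \V_i$ the bracket lies in $\zz_i$, so only the $Z_i$-component of $Z$ contributes. Consequently $j(Z)$ is block-diagonal with diagonal blocks $j_{G_i}(Z_i)$.

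With this in hand, the first assertion is immediate. Choose any nonzero $Z \in \zz_1$, extended by zero in the other summands; then the $\V_2$-block of $j(Z)$ is $j_{G_2}(0) = 0$, and since $\V_2$ is nonzero, $j(Z)$ is singular. Hence $\n_G$ cannot be nonsingular, and Lemma~\ref{GMsing} forces it into exactly one of the remaining two classes.

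For the equivalence, the $(\Leftarrow)$ direction is an open-dense argument: assuming each $\n_{G_i}$ is nonsingular or almost nonsingular, pick an open dense $U_i \subseteq \zz_i$ on which $j_{G_i}$ is nonsingular (either $\zz_i \setminus \{0\}$, or the open dense set guaranteed by Definition~\ref{singnonalmost}). Under the identification $\zz \cong \zz_1 \oplus \cdots \oplus \zz_k$, the set $U_1 \times \cdots \times U_k$ is open and dense, and by the block-diagonal description every block of $j(Z)$ is nonsingular on it, so $j(Z)$ is; hence $\n_G$ is almost nonsingular. For $(\Rightarrow)$, I argue contrapositively: if some $\n_{G_i}$ is singular, then $j_{G_i}(Z_i)$ is singular for every $Z_i \in \zz_i$ (including $Z_i = 0$), so the $i$-th block of $j(Z)$ is singular for every $Z \in \zz$, which makes $j(Z)$ itself singular for every $Z$. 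But then $\n_G$ would be singular, contradicting the almost-nonsingular hypothesis.

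The only real content of the argument is the block-diagonal structure of $j(Z)$; once that observation is in place, everything else is routine bookkeeping about open dense subsets of a finite direct sum of vector spaces. I expect no substantial obstacle beyond being careful about the vanishing of cross-component brackets when checking the block decomposition.
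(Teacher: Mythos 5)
Your proof is correct and follows essentially the same route as the paper's: both rest on the observation that $j(Z)$ is block-diagonal with respect to the component decomposition $\V = \V_1 \oplus \cdots \oplus \V_k$, with $i$-th block $j_{G_i}(Z_i)$, and then argue blockwise (your forward implication is exactly the paper's case analysis, packaged via ``the $i$-th block is singular for every $Z_i$, including $Z_i=0$''). The only cosmetic differences are that the paper rules out nonsingularity by citing Lemma~\ref{K2nonsinglemma} instead of exhibiting a singular $j(Z)$, and it produces a single nonsingular $j(Z)$ and invokes the trichotomy of Lemma~\ref{GMsing} where you exhibit the open dense set $U_1 \times \cdots \times U_k$ directly; just take care in your first step to choose $Z$ supported in a component that actually has an edge, so that a nonzero $Z \in \zz_1$ exists.
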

\begin{proof}
From Lemma \ref{K2nonsinglemma}, $K_2$ is the only graph associated to a nonsingular 2-step nilpotent Lie algebra.   Thus if $G$ has more than one connected component, the associated Lie algebra is either singular or almost nonsingular.  Let $G=H_1\cup H_2\cup\cdots\cup H_k$ where $H_i$ are the connected components of $G$, $i=1,\dots,k$. The corresponding Lie algebra can be decomposed as $\ngg = \n_{H_1} \oplus \cdots \oplus \n_{H_k}$, where $\n_{H_i}$ is the metric, 2-step nilpotent Lie algebra associated to the graph $H_i$, $1\leq i\leq k$. Each $\n_{H_i}$ can be decomposed as $\n_{H_i} = \V_i \oplus \zz_i$ where $\zz_i$ is the center of $\n_{H_i}$ and $\V_i = \zz_i^{\perp}$. To show that the Lie algebra $\ngg$ is almost nonsingular we need only to find an element $Z$ in the center $\zz$ of $\n_G$ such that $j(Z)$ has purely nonzero eigenvalues.  Note that the eigenvalues of $j(Z)$ will be precisely the union of the eigenvalues of  $j(Z)$ restricted to $\V_i$ for $1\leq i \leq k$.  Assuming each connected component is nonsingular or almost nonsingular, for each $1\leq i \leq k$ choose $Z^*_i$ in the center of $\n_{H_i}$ such that $j(Z^*_i)$ has only nonzero eigenvalues on $\V_i$.  Let $Z=Z^*_1+Z^*_2+\cdots +Z^*_k$.  Then it follows that $j(Z)$ has only nonzero eigenvalues on $\V$ and $\ngg$ must be almost nonsingular.  

Note that if for one connected component $H_l$ of $G$ the subalgebra $\n_{H_l}$ is singular then for any $Z \in \zz$, $j(Z)$ has zero as an eigenvalue. This is true since either $Z$ has a nonzero component in the center of $\n_{H_l}$, hence has zero as an eigenvalue or $Z$ has no component in $\n_{H_l}$, in which case $\V_l$ is contained in the kernel of $j(Z)$. Therefore for any $Z\in \zz$, $j(Z)$ is singular and thus $\ngg$ is singular.  
\end{proof}

Example \ref{ansingex} is typical of graphs for which $\ngg$ is almost nonsingular. The process used in that example of finding a proper spanning subgraph in the graph $C_{2n}$ to demonstrate almost nonsingularity can be generalized to characterize graphs with the property that $\ngg$ is almost nonsingular.

\begin{defn}\label{VertexCovering}
{\rm Let $G=(S,E)$ be a graph with $|S|=2n$, $n>1$ and the set of vertices $S = \{X_1, \ldots, X_{2n}\}$. We say that $G$ has a {\em vertex covering by $n$ disjoint copies of $K_2$} if there exists a permutation $\sigma \in S_{2n}$ such that $X_{\sigma(2i-1)}X_{\sigma(2i)} \in E$ for all $1 \leq i \leq n$, where $ S_{2n}$ is the symmetric group.}
\end{defn}

\begin{ex} The graph $P_4$ and its vertex covering are given in Figure \ref{P4vertcov}.  The permutation $\sigma$ from the definition of the vertex covering is defined $\sigma(1)=1, \ \sigma(2)=4, \ \sigma(3)=2, \ \sigma(4)=3$.
\end{ex}

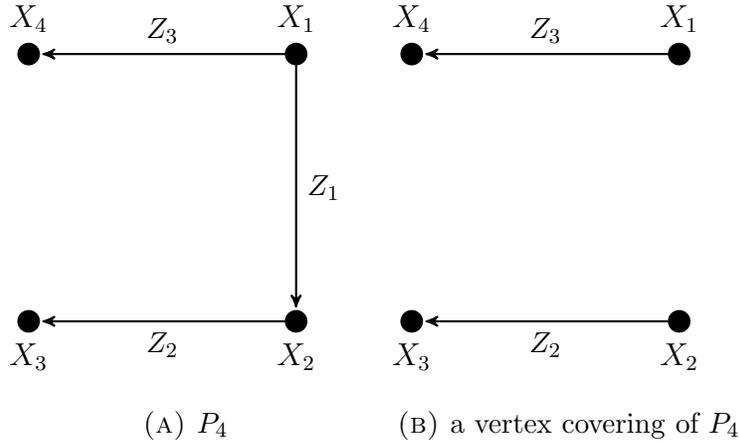
\begin{figure}[h!]

\begin{subfigure}[b]{.3\textwidth}
\begin{tikzpicture}[->,>=stealth',shorten >=1pt,auto,
  thick,vertex/.style={circle,draw,fill,scale=.7,font=\sffamily\Large\bfseries},node distance=2in,thick]

\node[vertex, label=above:{$X_4$}](X4) {};
  \node[vertex, label=below:{$X_3$}](X3) [below of=X4] {};
  \node[vertex, label=above:{$X_1$}] (X1) [right of=X4]  {};
  \node[vertex, label=below:{$X_2$}] (X2)[below of=X1]  {};

\path[every node/.style={font=\sffamily\small}]
 (X1) edge node[midway,right]{$Z_1$} (X2)
        edge node[midway,above]{$Z_3$} (X4)
(X2) edge node[midway,below]{$Z_2$} (X3);

\end{tikzpicture}
\caption{$P_4$}
%\end{figure}
\end{subfigure}
\begin{subfigure}[b]{.3\textwidth}
\begin{tikzpicture}[->,>=stealth',shorten >=1pt,auto,
  thick,vertex/.style={circle,draw,fill,scale=.7,font=\sffamily\Large\bfseries},node distance=2in,thick]

\node[vertex, label=above:{$X_4$}](X4) {};
  \node[vertex, label=below:{$X_3$}](X3) [below of=X4] {};
  \node[vertex, label=above:{$X_1$}] (X1) [right of=X4]  {};
  \node[vertex, label=below:{$X_2$}] (X2)[below of=X1]  {};

\path[every node/.style={font=\sffamily\small}]

(X2) edge node[midway,below]{$Z_2$} (X3)
 (X1) edge node[midway,above]{$Z_3$} (X4);
\end{tikzpicture}

\caption{a vertex covering of $P_4$}
\end{subfigure}

\caption{An example of a vertex covering}
\label{P4vertcov}
\end{figure}

%\begin{figure}[h!]  %note that the [h!] stops the figure from moving around in the document as latex wants it to.
%\centering
%\begin{subfigure}[b]{.3\textwidth}
% \includegraphics[width=\textwidth]{Pfour}
%\caption{$P_4$}
%\end{subfigure}
%~ 
%\begin{subfigure}[b]{.3\textwidth}
%\includegraphics[width=\textwidth]{Pfourcover}
%\caption{a vertex covering of $P_4$}
%\end{subfigure}
%\caption{An example of a vertex covering}
%\label{P4vertcov}
%\end{figure}

%\begin{center}
% $P_4$: \includegraphics[scale=.7]{Pfour}, \ \ \ vertex covering: \includegraphics[scale=.7]{Pfourcover}
%\end{center}

\begin{prop}\label{AlmostnonsigularCriterion}
 Let $G=(S,E)$ be a graph with $|S|=2n$, $n>1$.  The associated 2-step nilpotent Lie algebra $\n_G$ is almost nonsingular if and only if $G$ has a vertex covering by $n$ disjoint copies of $K_2$.

\end{prop}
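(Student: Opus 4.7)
The statement asserts that $\n_G$ is almost nonsingular precisely when $G$ admits a perfect matching. My plan is to express $\det(j(Z))$ as a polynomial in the coefficients of $Z$ in the edge basis $\{Z_1,\dots,Z_q\}$ of $\zz$, and then interpret its Pfaffian expansion combinatorially in terms of matchings. Writing $Z=\sum_{k=1}^{q} c_k Z_k$, the matrix $A(Z)$ of $j(Z)$ in the orthonormal basis $\{X_1,\dots,X_{2n}\}$ of $\V$ is skew-symmetric, with $A(Z)_{ij}=\pm c_k$ whenever $G$ has an edge $Z_k$ incident to $X_i$ and $X_j$, and $A(Z)_{ij}=0$ otherwise.

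For the ``if'' direction, suppose $G$ has a vertex covering by $n$ disjoint copies of $K_2$ given by edges $Z_{k_1},\dots,Z_{k_n}$. I would take $Z=Z_{k_1}+\cdots+Z_{k_n}$. After permuting the basis so that the two endpoints of each matched edge occupy consecutive positions, $A(Z)$ becomes block-diagonal with $n$ skew-symmetric $2\times 2$ blocks, each with $\pm 1$ in the off-diagonal positions. Since each block has determinant $1$, $j(Z)$ is nonsingular. As $|S|=2n>2$, Lemma \ref{K2nonsinglemma} rules out $\n_G$ being nonsingular, and Lemma \ref{GMsing} then forces $\n_G$ to be almost nonsingular.

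For the ``only if'' direction, I would use the classical identity $\det A(Z)=\pf(A(Z))^2$ together with the combinatorial Pfaffian expansion
$$\pf(A(Z))=\sum_{M}\sgn(M)\prod_{\{i,j\}\in M,\; i<j} A(Z)_{ij},$$
where $M$ ranges over all perfect matchings of $\{1,\dots,2n\}$. A matching $M$ contributes a nonzero term only when every pair $\{i,j\}\in M$ corresponds to an edge of $G$, i.e., only when $M$ is itself a perfect matching of $G$. Because $G$ is simple, distinct perfect matchings of $G$ pick out distinct sets of edges and hence distinct monomials in the $c_k$, so no cancellation can occur among the nonzero contributions. Therefore $\pf(A(Z))$ vanishes identically as a polynomial in $(c_1,\dots,c_q)$ if and only if $G$ admits no perfect matching. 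Almost nonsingularity requires $\det A(Z)\not\equiv 0$, so $G$ must have a perfect matching.

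The main subtle point is the no-cancellation observation in the Pfaffian expansion, but it reduces quickly to the fact that a simple graph has at most one edge between any two vertices, so distinct perfect matchings produce genuinely distinct monomials. Everything else is routine linear algebra combined with the trichotomy of Lemma \ref{GMsing} and the identification of the nonsingular case in Lemma \ref{K2nonsinglemma}.
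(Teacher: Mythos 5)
Your proof is correct and takes essentially the same route as the paper's: an explicit block-diagonal matrix for $j(Z)$ built from the matching edges in one direction, and the Pfaffian expansion of the skew-symmetric matrix of $j(Z)$ in the other. The only difference is cosmetic --- the paper fixes a single nonsingular $Z$ and extracts from $\pf(A)\neq 0$ a permutation whose entries are all nonzero, whereas you view $\pf(A(Z))$ as a polynomial in the edge coefficients; your no-cancellation observation is correct but not actually needed for the implication you are proving, since ``no perfect matching $\Rightarrow$ every Pfaffian term vanishes'' already suffices.
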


\begin{proof}
Suppose that $G$ has a vertex covering by $n$ disjoint copies of $K_2$ and $S =  \{X_1, \ldots, X_{2n}\}$.  Let $\sigma$ denote the permutation  in the symmetric group $S_{2n}$ such that $X_{\sigma(2i-1)}X_{\sigma(2i)} \in E$ for all $1 \leq i \leq n$. We will prove that there exists  a $Z\in\zz$ with all nonzero eigenvalues.
%We take inspiration from the example of $G$ a cycle.  
%Here, we consider the spanning subgraph $H$ of $G$ such that $H=K_2\cup K_2\cup \cdots\cup K_2$, $n$ copies.  
 Let $Z= Z_1+ Z_2+\cdots+ Z_n$ where $Z_i = [X_{\sigma(2i-1)}, X_{\sigma(2i)}]$  for all $1 \leq i \leq n$.
 The matrix of $j(Z)$ with respect to the ordered basis $\{X_{\sigma(1)}, X_{\sigma(2)}, \ldots, X_{\sigma(2n)}\}$ is \[\left(\begin{array}{cccccccc} 0 & 1 & 0 & 0 & 0 & \cdots & 0 & 0 \\  -1 & 0 & 0 & 0 & 0 & \cdots & 0 & 0 \\ 0 & 0 & 0 &1 &  0 & \cdots & 0 & 0 \\0 & 0 & -1 & 0 & 0 & \cdots & 0 & 0 \\ \vdots & \vdots & 0 & 0 & \ddots & \ddots & \vdots & \vdots \\ 0 & 0 & \cdots & \cdots & \cdots & \cdots & 0 & 1 \\
0 & 0 & \cdots & \cdots & \cdots & 0 & -1 & 0 \end{array}\right).\]

Clearly the eigenvalues of $j(Z)$ are  $\pm i$.  Hence $j(Z)$ is nonsingular and $\n_G$ is almost nonsingular.

Now assume that $\n_G$ is almost nonsingular. This means there exists $Z \in \zz$ such that $j(Z)$ is nonsingular. Hence the matrix of $j(Z)$  with  respect  to any ordered basis is invertible. Therefore, take basis $B=\{X_1, \ldots, X_{2n}\}$ for $\V$ where  $\{X_i\}$ is the  set of vertices in the graph $G$.  Denote the matrix of $j(Z)$ in the basis $B$ by  $j(Z)= A=(a_{ij})$.  Since $A$ is a $2n \times 2n$ skew-symmetric matrix,
the determinant of $A$ is the square of the Pfaffian of $A$, denoted by $\pf(A)$. The Pfaffian  is given by
\[\ds \pf(A) =  \frac{1}{2^n n!} \sum_{\sigma \in S_{2n}} \sgn(\sigma) \prod_{i=1}^n a_{\sigma(2i-1) \sigma(2i)}, \]
where $\sgn(\sigma)$ is the signature of $\sigma$  (see, for example, \cite{G}).
Since the determinant of $A$ is nonzero, the $\pf(A)$ is nonzero. That means there exists  a permutation $\sigma \in S_{2n}$ such that
all entries $a_{\sigma(2i-1) \sigma(2i)}$ are nonzero for $1 \leq i \leq n$.  Hence $\langle j(Z)(X_{\sigma(2i)}), X_{\sigma(2i-1)}\rangle \neq 0$ and $X_{\sigma(2i-1)}X_{\sigma(2i)}$ is an edge in the graph $G$ for all $1 \leq i \leq n$ which means that $G$ has a vertex covering by $n$ disjoint copies of $K_2$.
\end{proof}

\section{Heisenberg-like Lie algebras}\label{Heisenbergsect}

There are many different equivalent characterizations of Heisenberg-like Lie algebras.  Gornet and Mast originally defined them as generalizations of Heisenberg type Lie algebras using a totally geodesic condition, see \cite{GM}.  We will use a characterization given by Gornet and Mast, which is stated in Theorem \ref{GMHlikeThm} below, as the defining property of Heisenberg-like 2-step nilpotent Lie algebras.  Denote the distinct eigenvalues of $j(Z)$ by $\{\pm i \vartheta_1(Z), \pm i \vartheta_2(Z), \dots, \pm i \vartheta_m(Z)\}$ for any $Z\in\zz$.  Note that for each Heisenberg-like 2-step nilpotent Lie algebra $m$ is a fixed integer by the following result.  It is an immediate corollary that all Heisenberg-like 2-step nilpotent Lie algebras are either singular or nonsingular.

\begin{thm}[\cite{GM} Theorem 3.7]  A 2-step nilpotent metric Lie algebra is Heisenberg-like if and only if for every $i=1,\dots,m$ there is a constant $c_i\geq 0$ such that for every nonzero $Z\in\zz$, $\vartheta_i(Z)=c_i|Z|$.
\label{GMHlikeThm}
\end{thm}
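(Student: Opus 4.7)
The plan is to route the equivalence through Eberlein's algebraic description of totally geodesic subalgebras in a 2-step nilpotent metric Lie algebra. Recall that a subspace $\h = (\h\cap\V) \oplus (\h\cap\zz)$ is totally geodesic exactly when it is a subalgebra, $j(W)(\h\cap\V) \subseteq \h\cap\V$ for every $W \in \h\cap\zz$, and $j(W')X \perp \h$ for every $X \in \h\cap\V$ and $W' \in \zz \cap \h^\perp$. Unpacking the Gornet--Mast geometric definition, being Heisenberg-like translates into the following abundance statement: for every unit $X \in \V$ and every nonzero $Z \in \zz$ with $j(Z)X \neq 0$, the subspace $\operatorname{span}\{X,\, j(Z)X,\, Z\}$ is a totally geodesic 3-dimensional subalgebra. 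My plan is to convert this geometric abundance into the eigenvalue statement, and conversely.

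For the direction $(\Leftarrow)$, I would assume $\vartheta_i(Z) = c_i|Z|$ for all $i$ and every nonzero $Z \in \zz$. Then on the $i$-th real generalized eigenspace $V_i(Z)$ of $j(Z)$ we have $j(Z)^2 = -c_i^2 |Z|^2 I$, so for any $X \in V_i(Z)$ the plane $\operatorname{span}\{X, j(Z)X\}$ is $j(Z)$-invariant. Eberlein's criterion then certifies that $\operatorname{span}\{X, j(Z)X, Z\}$ is a totally geodesic 3-dimensional subalgebra. Decomposing an arbitrary $X$ into eigenspace components and assembling produces the required abundance, so $\n$ is Heisenberg-like.

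For the converse $(\Rightarrow)$, applying the totally geodesic criterion to $\operatorname{span}\{X, j(Z)X, Z\}$ forces $j(Z)^2 X \in \operatorname{span}\{X, j(Z)X\}$; since skew-symmetry gives $\langle j(Z)^2 X, j(Z)X \rangle = 0$, we conclude $j(Z)^2 X = -\lambda(X,Z) X$ for some scalar $\lambda(X,Z) \geq 0$. Thus every nonzero $X \in \V$ is already an eigenvector of $j(Z)^2$, so $\V$ decomposes into a common-dimension family of real eigenspaces $V_i(Z)$ and each $\vartheta_i$ is a well-defined function on $\zz \setminus \{0\}$, automatically homogeneous of degree one in $Z$. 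The hard part is upgrading this to $\vartheta_i(Z) = c_i |Z|$, i.e., showing the ratio $\vartheta_i(Z)/|Z|$ is constant on the unit sphere of $\zz$. I would handle this by fixing an eigenvector $X$ for $j(Z_0)$ with eigenvalue $-\vartheta_i(Z_0)^2$, differentiating the identity $j(Z)^2 X(Z) = -\vartheta_i(Z)^2 X(Z)$ along a smooth path $Z(t)$ in $\zz$, and applying the totally geodesic condition for nearby $Z$ to eliminate the directional dependence. The technical crux is the behavior of the eigenvalue decomposition at points where the $\vartheta_i$'s coalesce, where one must invoke perturbation theory for analytic families of skew-symmetric operators to keep the choice of $X(Z)$ smooth.
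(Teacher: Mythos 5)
The paper does not prove this statement: it is quoted from Gornet--Mast (\cite{GM}, Theorem 3.7), and the authors explicitly adopt the eigenvalue condition $\vartheta_i(Z)=c_i|Z|$ as their \emph{working definition} of Heisenberg-like. So there is no in-paper proof to compare against; the only question is whether your sketch would establish the Gornet--Mast equivalence on its own, and as written it would not.

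The gap is in your first step, the ``unpacking'' of Heisenberg-like into: for \emph{every} unit $X\in\V$ and every nonzero $Z$ with $j(Z)X\neq 0$, the subspace $\spn\{X,j(Z)X,Z\}$ is a totally geodesic subalgebra. Run your own $(\Rightarrow)$ argument on that statement: the criterion forces $j(Z)^2X\in\spn\{X,j(Z)X\}$, skew-symmetry kills the $j(Z)X$-component, so $j(Z)^2X=-\lambda X$ for \emph{every} $X\notin\ker j(Z)$. Since the set of such $X$ together with $0$ spans $(\ker j(Z))^{\perp}$ and every one of them is an eigenvector, $j(Z)^2$ is a single negative scalar there; worse, taking $X=X_0+X_1$ with $0\neq X_0\in\ker j(Z)$ and $X_1\notin\ker j(Z)$ and projecting $j(Z)^2X=-\lambda X$ onto $\ker j(Z)$ forces $\lambda=0$ and hence $j(Z)X_1=0$, a contradiction. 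So your version of the definition is satisfiable only when each $j(Z)$ is nonsingular with exactly one eigenvalue pair ($m=1$, $c_1>0$) --- which contradicts the theorem you are proving (it allows several distinct $c_i$ and $c_i=0$) and is refuted by this very paper's examples: $\n_{K_{1,n}}$ and $\n_{K_3}$ are Heisenberg-like yet each $j(Z)$ has both a nontrivial kernel and nonzero eigenvalues. The Gornet--Mast condition quantifies over $X$ lying in a single $j(Z)$-invariant subspace $W_i(Z)$, not over arbitrary $X$; with that correction your $(\Leftarrow)$ direction is essentially fine, but the ``assembling'' of eigenspace components becomes unnecessary rather than justified. Finally, the step you yourself flag as the crux of $(\Rightarrow)$ --- that $\vartheta_i(Z)/|Z|$ is constant on the unit sphere of $\zz$, including across points where eigenvalues coalesce --- is where the real content of the theorem lives, and ``differentiate and apply the condition for nearby $Z$'' is a plan, not an argument.
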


The following proposition characterizes graphs that are associated to Heisenberg-like Lie algebras.

\begin{prop}  If $\ngg$ is a Heisenberg-like Lie algebra associated with a graph $G$, then the graph $G$ does not contain a path of length three on four distinct vertices.
\label{path3}
\end{prop}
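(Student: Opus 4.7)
The plan is to argue by contraposition: assume $G$ contains a path on four distinct vertices $X_1, X_2, X_3, X_4$, with associated edge vectors $Z_\alpha = \pm [X_1, X_2]$, $Z_\beta = \pm [X_2, X_3]$, $Z_\gamma = \pm [X_3, X_4]$ in $\zz$, and derive a contradiction with the Heisenberg-like criterion of Theorem \ref{GMHlikeThm}. The only feature of the path I will actually use is that the two ``outer'' edges $Z_\alpha$ and $Z_\gamma$ involve disjoint pairs of vertices; this disjointness is what makes the eigenvalues of $j(Z_\alpha + Z_\gamma)$ decouple from those of $j(Z_\alpha)$ and $j(Z_\gamma)$.

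First I would compute the action of $j(Z_\alpha)$ on the standard basis of $\V$. Using the defining identity $\langle j(Z_\alpha) X_k, X_l\rangle = \langle [X_k, X_l], Z_\alpha\rangle$ together with the orthonormality of the edge-basis of $\zz$, the inner product on the right is nonzero only when $[X_k, X_l] = \pm Z_\alpha$, which forces $\{k,l\} = \{1,2\}$. Thus $j(Z_\alpha)$ annihilates $X_k$ for every $k \neq 1,2$ and acts on $\Span\{X_1, X_2\}$ as a $\pi/2$-rotation (up to sign). Its distinct eigenvalues are therefore $\{0, \pm i\}$, so the unique nonzero eigenvalue parameter equals $1$, while $|Z_\alpha| = 1$. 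An identical computation applies to $j(Z_\gamma)$ on $\Span\{X_3, X_4\}$.

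Next I would consider $Z := Z_\alpha + Z_\gamma$. Because $\{X_1, X_2\}$ and $\{X_3, X_4\}$ are disjoint, the support computations above show that $j(Z) = j(Z_\alpha) + j(Z_\gamma)$ is block-diagonal with two $2\times 2$ rotation blocks, each contributing eigenvalues $\pm i$, and zeros elsewhere. Thus the distinct eigenvalues of $j(Z)$ are again exactly $\{0, \pm i\}$, so its nonzero eigenvalue parameter equals $1$, while $|Z| = \sqrt{2}$. If $\n_G$ were Heisenberg-like, Theorem \ref{GMHlikeThm} would furnish a constant $c > 0$ with $\vartheta(W) = c|W|$ for every nonzero $W \in \zz$; evaluating at $W = Z_\alpha$ gives $c = 1$, whereas evaluating at $W = Z$ gives $c = 1/\sqrt{2}$, the desired contradiction.

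The only real obstacle is justifying the block structure of $j(Z_\alpha)$ (and $j(Z_\gamma)$) in a graph $G$ that may have many additional edges: one must observe that any other edge in $G$ gives a basis vector of $\zz$ orthogonal to $Z_\alpha$, so no stray bracket $[X_k, X_l]$ can contribute to $\langle \cdot, Z_\alpha\rangle$ unless it equals $\pm Z_\alpha$ itself. Once this orthogonality argument is recorded, the rest is eigenvalue bookkeeping and a single comparison of norms.
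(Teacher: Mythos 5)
Your proof is correct, and while it rests on the same pillar as the paper's --- testing the Gornet--Mast criterion (Theorem \ref{GMHlikeThm}) on two explicit central vectors supported on the path and comparing the resulting ratios $\vartheta_i(Z)/|Z|$ --- you choose different test vectors, and the difference is worth noting. The paper compares $j(Z_1+Z_3)$ with $j(Z_1+Z_2+Z_3)$; the latter is not block-diagonal and requires computing the eigenvalues $\pm\tfrac{i}{2}(\sqrt{5}\pm 1)$ of a full $4\times 4$ skew-symmetric matrix. You instead compare $j(Z_\alpha)$ with $j(Z_\alpha+Z_\gamma)$, where both maps decompose into at most two $2\times 2$ rotation blocks, so the eigenvalue bookkeeping is immediate: the nonzero eigenvalue parameter is $1$ in both cases while the norms are $1$ and $\sqrt{2}$. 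Your justification of the block structure (any bracket $[X_k,X_l]$ is a basis edge-vector orthogonal to $Z_\alpha$ unless it equals $\pm Z_\alpha$) is exactly the right point to make and is valid however many extra edges $G$ carries. A pleasant by-product, which you half-observe yourself, is that your argument never uses the middle edge of the path: it shows that $\n_G$ fails to be Heisenberg-like whenever $G$ contains two vertex-disjoint edges. This is formally stronger than Proposition \ref{path3} and is consistent with Corollary \ref{hlike}, since the connected graphs admitting no such pair of edges are precisely the stars and $K_3$. Both proofs share the same mild imprecision about matching the indices $i$ of the functions $\vartheta_i$ across different choices of $Z$, but in each case the contradiction survives: some $c_i$ must equal $1$ because $\pm i$ is an eigenvalue of $j(Z_\alpha)$ with $|Z_\alpha|=1$, and then $\sqrt{2}$ would have to occur as an eigenvalue parameter of $j(Z_\alpha+Z_\gamma)$, which it does not.
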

\begin{proof}
Assume that $G$ has a path of length three on four distinct vertices.  Label the vertices and edges of one such path so that the bracket relations are given by $[X_1,X_2]=Z_1, \ [X_2,X_3]=Z_2, \ [X_3,X_4]=Z_3$.  Assume that $\{Z_i\}$ is part of an orthonormal basis on $\zz$ and consider the subalgebra of $\ngg$ with basis $\{X_1, X_2, X_3, X_4, Z_1, Z_2, Z_3\}$.  In this subalgebra, consider $j(Z_1+Z_3)$ with matrix representation $\left( \begin {array}{cccc} 0&1&0&0
\\ -1&0&0&0\\ 0&0&0&1
\\0&0&-1&0\end {array}  \right) 
$.  Clearly the eigenvalues of $j(Z_1+Z_3)$ are $\{\pm i\}$, each with multiplicity two, and $|Z_1+Z_2|=\sqrt{2}$.  Thus if $\vartheta_i(Z)=c_i|Z|$, $\ds c_1=c_2=\frac{1}{\sqrt{2}}$.  Next consider $j(Z_1+Z_2+Z_3)$ with matrix representation $\left(   \begin {array}{cccc} 0&1&0&0
\\ -1&0&1&0\\ 0&-1&0&1
\\ 0&0&-1&0\end {array}   \right) $.  Here the eigenvalues are $\{\pm \frac{1}{2}i\left(\sqrt{5}-1\right), \pm \frac{1}{2}i\left(\sqrt{5}+1\right)\}$ but $|Z_1+Z_2+Z_3|=\sqrt{3}$.  In this case we found $\ds c_1=\frac{\sqrt{5}-1}{2\sqrt{3}},   c_2=\frac{\sqrt{5}+1}{2\sqrt{3}}$.  Therefore we have produced two different elements of $\zz$ for which we cannot find common constants such that the characterization of Heisenberg-like Lie algebras hold.  Thus $\ngg$ cannot be Heisenberg-like if it is associated to a graph $G$ containing a path of length three on four distinct vertices.

\end{proof}

\begin{cor}
A 2-step nilpotent Lie algebra $\ngg$ associated with a graph $G$ is Heisenberg-like if and only if one of the following holds:
\begin{enumerate}
\item $G$ is a star graph $K_{1,n}$ for $n>1$.
\item $G$ is the complete graph $K_3$.
\item $G$ is the disjoint union of a connected graph given in (1) or (2) together with with $m$ isolated vertices for some $m\in \N$.
\end{enumerate}
\label{hlike}
\end{cor}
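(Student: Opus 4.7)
The plan is to combine Theorem \ref{GMHlikeThm} (the eigenvalue characterization of Heisenberg-like) with Proposition \ref{path3} and a short graph-theoretic classification.

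For the sufficiency direction I would verify the eigenvalue condition directly for each listed family. Example \ref{singex} already shows that for $G = K_{1,n}$ the eigenvalues of $j(Z)$ are $\{0,\pm i|Z|\}$ for every $Z\in\zz$, giving constants $c_1 = 0$, $c_2 = 1$ in Theorem \ref{GMHlikeThm}. For $G = K_3$, writing out the $3\times 3$ skew-symmetric matrix of $j(a_1Z_1+a_2Z_2+a_3Z_3)$ in the basis $\{X_1,X_2,X_3\}$ and expanding the characteristic polynomial gives $\lambda(\lambda^2+|Z|^2)$, again yielding eigenvalues $\{0,\pm i|Z|\}$. For case (3), each isolated vertex lies in the kernel of every $j(Z)$, so attaching $m$ isolated vertices only adds zero eigenvalues without introducing new distinct values; the Heisenberg-like property therefore persists.

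For the necessity direction, suppose $\n_G$ is Heisenberg-like. First I would rule out having two non-trivial connected components. If $G = H_1\sqcup H_2\sqcup\cdots$ with both $H_1, H_2$ containing at least one edge, the decomposition $\n_G = \n_{H_1}\oplus\n_{H_2}\oplus\cdots$ is orthogonal and makes $j$ block-diagonal. Pick $Z_1\in\zz_{H_1}$ so that $j(Z_1)$ has a nonzero eigenvalue $\pm i c^{(1)}|Z_1|$ (such exists since $H_1$ has an edge). For any $Z_2\in\zz_{H_2}$, the eigenvalues of $j(Z_1+Z_2)$ restricted to $\V_{H_1}$ are still $\pm i c^{(1)}|Z_1|$, so applying Theorem \ref{GMHlikeThm} to $\n_G$ forces $c^{(1)}|Z_1| = c_i\sqrt{|Z_1|^2+|Z_2|^2}$ for a fixed constant $c_i$. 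Varying $|Z_2|/|Z_1|$ contradicts this, so at most one component of $G$ carries an edge.

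Next, isolated vertices sit in the kernel of every $j(Z)$ and only contribute to the multiplicity of the zero eigenvalue, not to the set of distinct eigenvalues, so removing them preserves the Heisenberg-like status. Hence we may assume $G$ is a connected graph with no isolated vertices. By Proposition \ref{path3}, $G$ contains no path of length three on four distinct vertices. A short case analysis then classifies such graphs: pick a vertex $v$ of maximum degree and observe that any edge from some $u\in N(v)$ to a vertex outside $\{v\}\cup N(v)$ produces a $P_4$, forcing $V(G) = \{v\}\cup N(v)$; similarly, any edge inside $N(v)$ combined with $|N(v)|\ge 3$ produces a $P_4$. The remaining possibilities are $K_3$ (when $|N(v)|=2$ with its two vertices adjacent) or $K_{1,n}$ (when $N(v)$ is independent), matching cases (1)--(3). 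The main obstacle is the disconnected-components step, where one must leverage the quadratic combination $|Z_1+Z_2|^2 = |Z_1|^2+|Z_2|^2$ against the linear scaling of block eigenvalues; the sufficiency and the final graph classification are routine once that framework is in place.
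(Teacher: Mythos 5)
Your proof is correct and follows the paper's overall skeleton --- Theorem \ref{GMHlikeThm} together with Proposition \ref{path3} for the necessity direction, and direct eigenvalue computations for $K_{1,n}$, $K_3$, and isolated vertices for sufficiency --- but it diverges in two places worth recording. For a graph with two components each carrying an edge, the paper first invokes the singular/almost-nonsingular dichotomy of Proposition \ref{almostnsconn} (almost nonsingular rules out Heisenberg-like) and then, in the singular case, argues that the multiplicity of the zero eigenvalue of $j(Z_1)$ strictly exceeds that of $j(Z_1+Z_2)$; this step leans on constancy of eigenvalue multiplicities, which is not literally contained in the statement of Theorem \ref{GMHlikeThm} as quoted. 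Your alternative --- fixing a nonzero eigenvalue $c^{(1)}|Z_1|$ of $j(Z_1+Z_2)$ restricted to the invariant subspace $\V_{H_1}$ and playing it against $|Z_1+Z_2|=\sqrt{|Z_1|^2+|Z_2|^2}$ as $|Z_2|$ varies, with only finitely many constants $c_i$ available --- contradicts the stated characterization directly and is therefore more self-contained relative to the tools the paper actually records. Second, the paper simply asserts that a connected graph containing no path of length three on four distinct vertices must be a star or $K_3$; your maximum-degree case analysis supplies that missing argument, and it is sound since Proposition \ref{path3} concerns paths as subgraphs rather than induced paths. One small caveat, shared by your classification and by the corollary as stated: the case $|N(v)|=1$ yields $K_2=K_{1,1}$, whose Lie algebra is the Heisenberg algebra and hence Heisenberg-like, yet it is excluded from item (1) by the restriction $n>1$; this is an issue with the statement rather than with your argument.
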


\begin{proof}

Assume that $G$ is a connected graph. If $\ngg$ is Heisenberg-like, it must be associated to a graph $G$ that does not contain a path of length three on four distinct vertices, by Proposition \ref{path3}.  Thus $G$ must be a star or $K_3$. 

Conversely, let $G=K_{1,n}$ be a star graph as in Example \ref{singex}. For any nonzero $Z\in \zz$, the eigenvalues of $j(Z)$ are $\{0,\pm i |Z|\}$ with zero having multiplicity $n-1$.  Thus $\n_{K_{1,n}}$ is Heisenberg-like.  

Next let  $G=K_3$ with vertices $\{X_1, X_2, X_3\}$ and edges $\{Z_1, Z_2, Z_3\}$. Label the edges so that in $\ngg$ we have the bracket relations $[X_1,X_2]=Z_1, \ [X_2,X_3]=Z_2, \ [X_1,X_3]=Z_3$.  Letting $Z=a_1Z_1+a_2Z_2+a_3Z_3$, $a_1, a_2, a_3 \in \R$, then in the standard basis for $\V$,  
$j(Z)=\left(   \begin {array}{ccc} 
0 & -a_1 & -a_3\\
  a_1&0& -a_2\\ 
a_3&a_2&0
\end{array} \right) 
$.  The map $j(Z)$ has eigenvalues $\{0,\pm i \sqrt{a_1^2+a_2^2+a_3^2}\}=\{0,\pm i |Z|\}$.  Thus $\n_{K_3}$ is also Heisenberg--like.

Suppose $G$ has more than one connected component.  Then $\ngg$ is either almost nonsingular or singular.  If $\ngg$ is almost nonsingular it is not Heisenberg-like.  So suppose $\ngg$ is singular and there are two distinct edges in different connected components of $G$. If $Z_1$, $Z_2$ correspond to edges in different connected components of $G$ then the multiplicity of the zero eigenvalue of $j(Z_1)$ will be strictly larger than the multiplicity of the zero eigenvalue of $j(Z_1 + Z_2)$. Since the multiplicity of the zero eigenvalue changes depending on $Z$, we conclude that $\ngg$ is not Heisenberg-like.

Let $G$ be a connected graph with corresponding metric two-step nilpotent Lie algebra $\n_G$ and  let $G^*$ be the disjoint union of $G$ together with $m$ isolated vertices. Denote the corresponding metric 2-step nilpotent Lie algebra by $\n_{G^*}$ and let $i:\n_G \hookrightarrow \n_G^*$ be the inclusion isometry. Every nonzero element $Z^*$ in the center of $\n_{G^*}$ is equal to $i(Z)$ for some $Z$ in the center of $\n_G$. The eigenvalues of $j(Z^*)$ are the same as the eigenvalues of $j(Z)$, increasing the multiplicity of the zero eigenvalue by $m$. Therefore, $\n_{G^*}$ is Heisenberg-like if and only if $\n_G$ is Heisenberg-like.

\end{proof}

\begin{remark} The Lie algebra associated to the star graph is precisely the first case in Example 6.3 of \cite{DDM}. \end{remark}

\section{density of closed geodesics results}\label{DCGsect}

We continue the study begun in \cite{E} and continued in \cite{M}, \cite{LP}, \cite{DeM}, and \cite{DC}, of the density of closed geodesics property of the quotient of a simply connected, 2-step nilpotent Lie group $N$ by a lattice $\Gamma \subseteq N$.  The investigation into the density of closed geodesics property relies heavily on the geodesic equations, which were solved in \cite{E} for a simply connected, metric 2-step nilpotent Lie group. These equations are given below in Proposition \ref{geoeqns}. This section begins with terminology and several essential previously known results and ends with fundamental questions which are partially addressed in the subsequent results of this paper.

\begin{defn} {\rm {Let $M$ be a compact manifold with Riemannian metric. The manifold $M$ is said to have the \textit{density of closed geodesics property} if the vectors tangent to closed  geodesics are dense in the unit tangent bundle $TM$. 
}}\end{defn}

\begin{defn} {\rm {Given a metric 2-step nilpotent Lie algebra $\metn$, a nonzero element $Z \in \zz$ is said to be \textit{in resonance} if the ratio of any pair of nonzero eigenvalues of $j(Z)$ is rational.} }
\end{defn}

The resonance condition is a metric Lie algebra condition. When convenient, say that the map $j(Z)$ is in resonance rather than the element $Z$.

In \cite{Ma}, Mast gives a necessary condition for the density of closed geodesics in $\GN$. Note that the theorem in \cite{Ma} is stated for nonsingular $N$, but the proof given there holds in general. 

\begin{thm} \cite{Ma} Let $N$ be a simply connected, 2-step nilpotent Lie group with a left invariant metric. If $\GN$ has the density of closed geodesics property for some lattice $\Gamma$ in $N$, then $Z$ is in resonance for a dense subset of $Z$ in $\zz$.
\label{MaResIsNec}
\end{thm}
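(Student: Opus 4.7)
The strategy is, given an arbitrary nonempty open set $U \subseteq \zz$, to exhibit a resonant element $Z_0 \in U$. The key input is the form of the geodesic equation on a simply connected metric 2-step nilpotent Lie group (Eberlein's Proposition \ref{geoeqns}): for a geodesic $\gamma$ with $\gamma(0) = e$, the left-invariant-frame velocity $V(t) := L_{\gamma(t)^{-1}*}\gamma'(t) \in \V \oplus \zz$ has constant $\zz$-component $Z_0$ and $\V$-component $X(t) = e^{t j(Z_0)}X_0$, where $X_0 = X(0)$. A geodesic in $\GN$ is smoothly closed at time $T > 0$ iff the lift satisfies $\gamma(T) = \phi\gamma(0)$ for some $\phi \in \Gamma$ with matched velocities; since left translation is an isometry, this amounts to $V(T) = V(0)$, i.e.\ $e^{T j(Z_0)} X_0 = X_0$.

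The plan is then to use DCG to produce, arbitrarily near a chosen reference vector, the initial velocity of a smoothly closed geodesic, and to read off resonance of its $\zz$-component from the closing condition above. First I would fix $Z^* \in U$ in the open dense subset of $\zz$ on which the eigenvalue multiplicities of $j(Z)$ are locally constant, so the $j(Z)$-invariant decomposition $\V = \ker j(Z) \oplus \bigoplus_k E_k(Z)$ (with $j(Z)$ acting on each 2-plane $E_k(Z)$ with eigenvalues $\pm i\vartheta_k(Z)$) varies continuously near $Z^*$. Then I would pick $X^* \in (\ker j(Z^*))^{\perp}$ having nonzero component on every nontrivial plane $E_k(Z^*)$. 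By DCG, the unit vector $(X^* + Z^*)/\|X^* + Z^*\|$ is approximated in the unit tangent bundle by initial velocities of smoothly closed geodesics; pick one with initial data $X_0 + Z_0$ so close to $X^* + Z^*$ that $Z_0 \in U$ and $X_0$ retains a nonzero component on each plane $E_k(Z_0)$. The operator $e^{T j(Z_0)}$ acts on $E_k(Z_0)$ as rotation by $T\vartheta_k(Z_0)$, so $e^{T j(Z_0)}X_0 = X_0$ forces $T\vartheta_k(Z_0) \in 2\pi\Z$ for every $k$. Hence all nonzero eigenvalues of $j(Z_0)$ are rational multiples of $2\pi/T$, pairwise rationally related, so $Z_0 \in U$ is in resonance.

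The main obstacle is propagating the transversality of $X^*$ (nonzero projection onto every nontrivial eigenplane of $j(Z^*)$) to the corresponding property for the perturbed pair $(X_0, Z_0)$ with respect to $j(Z_0)$. If $Z$ is perturbed across a locus where the spectrum of $j(Z)$ changes multiplicity structure, eigenplanes can split or merge discontinuously and the transversality need not survive. Restricting the initial choice of $Z^*$ to the open dense set of spectrally regular points for the family $\{j(Z)\}_{Z \in \zz}$ resolves this: on that set the eigenprojections are continuous in $Z$, and one may shrink the DCG approximation neighborhood until every nearby closed geodesic inherits the required transversality, at which point the closing condition delivers the full resonance relations.
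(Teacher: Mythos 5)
The paper does not actually prove Theorem \ref{MaResIsNec}; it is quoted from Mast \cite{Ma} (with the remark that Mast's proof, stated there for nonsingular $N$, works in general), so there is no in-paper argument to compare against. Your reconstruction is essentially the standard one and I believe it is correct: the only data a smoothly closed geodesic gives you is $e^{Tj(Z_0)}X_0 = X_0$, and you correctly identify that this yields resonance of $Z_0$ only if $X_0$ meets every nonzero eigenspace of $j(Z_0)^2$, which is exactly the transversality you then arrange by working at spectrally regular $Z^*$ and using continuity of the spectral projections. Three small points to tighten. First, you should justify that the spectrally regular set is open and dense: the number of distinct eigenvalues of $j(Z)$ is lower semicontinuous and bounded, so the locus where it is locally constant is open and dense, and on that locus the multiplicities cannot jump, giving continuous eigenprojections. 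Second, DCG approximates the \emph{unit} vector $(X^*+Z^*)/\|X^*+Z^*\|$, so the closed geodesic you obtain has $Z_0$ near $Z^*/\|X^*+Z^*\|$ rather than near $Z^*$; since $j(rZ) = rj(Z)$, both the eigenplane decomposition and the resonance condition are invariant under scaling of $Z$, so you should rescale $Z_0$ back into $U$ at the end rather than asserting $Z_0 \in U$ directly. Third, $E_k(Z)$ need not be a $2$-plane when $\pm i\vartheta_k$ has multiplicity, but on the full eigenspace of $j(Z_0)^2$ for $-\vartheta_k^2$ one has $e^{Tj(Z_0)} = \cos(T\vartheta_k)\,Id + \vartheta_k^{-1}\sin(T\vartheta_k)\,j(Z_0)$, which fixes a nonzero vector only if $T\vartheta_k \in 2\pi\Z$, so your conclusion stands. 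With those details filled in, the argument is complete and, consistent with the paper's remark, uses no nonsingularity hypothesis.
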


Mast proved that $\GN$ has the density of closed geodesic property for every lattice $\Gamma \subseteq N$ under the conditions that $\metn$ is nonsingular and that every nonzero $Z\in \zz$ is in resonance.  

\begin{thm}\cite{Ma}\label{MaDCG}  Let $N$ be a
  nonsingular, simply 
  connected, 2-step 
  nilpotent Lie group  with a  left invariant metric.  Suppose $j(Z)$ is in resonance for all nonzero $Z \in
  \zz$. Then for any lattice $\Gamma$ in $N$, $\GN$ has the density of
  closed geodesics property. 
\end{thm}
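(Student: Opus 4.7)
The plan is to use Eberlein's explicit geodesic equations in $N$ together with the global resonance hypothesis to reduce the closedness of a geodesic in $\GN$ to an arithmetic condition on its center translation, and then to produce a dense family of closed geodesics by exploiting the density of rational points in $\zz$.

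First, by left-invariance of the metric and transitivity of $N$ on $\GN$, it suffices to show that the set of unit initial velocities $v\in\n$ at $e\in N$ whose geodesics project to smoothly closed curves in $\GN$ is dense in the unit sphere of $\n=\V\oplus\zz$. Since the subset where $v=X_0+Z_0$ has $Z_0\neq 0$ is open and dense, I would only need to approximate such \emph{admissible} $v$. For an admissible $v$, Eberlein's equations give $X'(t)=e^{tj(Z_0)}X_0$ and $Z'(t)=Z_0+\tfrac12[X(t),X'(t)]$, with $X(0)=Z(0)=0$. Nonsingularity of $j(Z_0)$ produces the closed form $X(t)=j(Z_0)^{-1}(e^{tj(Z_0)}-I)X_0$, and resonance of $j(Z_0)$ ensures a minimal $T=T(Z_0)>0$ with $e^{Tj(Z_0)}=I$. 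Hence $X$, $X'$, and the integrand $[X,X']$ are all $T$-periodic, and in particular $X(kT)=0$ for every $k\in\N$. Setting
\[
W(X_0,Z_0):=TZ_0+\tfrac12\int_0^T [X(s),X'(s)]\,ds\in\zz,
\]
integration yields $Z(kT)=kW(X_0,Z_0)$, so $\gamma(kT)=\exp(kW(X_0,Z_0))$ lies in the central subgroup $\exp(\zz)$.

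Next, by Mal'cev's theorem I would fix a basis of $\n$ relative to which $\log\Gamma$ has rational coordinates; in particular $L_\zz:=\log(\Gamma\cap\exp\zz)$ is a full-rank $\Z$-lattice in $\zz$. Since $\gamma(kT)\in\exp(\zz)$, the geodesic is smoothly closed in $\GN$ at time $kT$ whenever $kW(X_0,Z_0)\in L_\zz$. Equivalently, $v$ is the initial velocity of a closed geodesic whenever $W(X_0,Z_0)$ lies in the countable dense subset $L_\zz\otimes\Q\subset\zz$.

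The crux of the proof is then to show that $W^{-1}(L_\zz\otimes\Q)$ is dense in the admissible set. The map $W\colon\V\oplus(\zz\setminus\{0\})\to\zz$ is continuous, and I would argue it is locally open in the $Z_0$-variable. Under nonsingularity, the eigenvalues of $j(Z_0)$ depend smoothly on $Z_0$ via the implicit function theorem applied to the characteristic polynomial, and since resonance is assumed globally, the period $T(Z_0)$ varies continuously as well; thus the leading term $T(Z_0)Z_0$ is a local homeomorphism of $\zz\setminus\{0\}$ into $\zz$. Consequently $W(X_0,\cdot)$ has image containing a neighborhood of each of its values, and density of $L_\zz\otimes\Q$ in $\zz$ lifts to density of $W^{-1}(L_\zz\otimes\Q)$ in the admissible set. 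The main obstacle is to control the $Z_0$-dependence of $T$ and of the bracket integral carefully enough to upgrade this pointwise openness to a genuinely open map on a full neighborhood; once this is handled, projecting back to $\GN$ produces the desired density of tangent vectors to smoothly closed geodesics in the unit tangent bundle of $\GN$.
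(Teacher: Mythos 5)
The paper does not actually prove this theorem: it is quoted from Mast \cite{Ma}, and the surrounding machinery (resonance, the translation element $\gamma(\omega)$, the first hit map) is only laid out later, in subsection \ref{subsec:FirstHitApproach}. Your overall strategy --- use resonance to get $\gamma(kT)\in\exp(\zz)$ with $\log\gamma(kT)=kW(X_0,Z_0)$, then arrange $W(X_0,Z_0)\in L_\zz\otimes\Q$ on a dense set of initial velocities --- is that same first-hit-map philosophy, and the reductions in your first two paragraphs (restriction to geodesics through $e$, periodicity of $X(t)$, the lattice $L_\zz=\log(\Gamma\cap\exp\zz)$) are sound.

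The gap is in your final paragraph, and it is fatal as written. Since $j$ is linear in $Z$, we have $j(rZ_0)=rj(Z_0)$, hence $T(rZ_0)=T(Z_0)/r$ for $r>0$, so the ``leading term'' $Z_0\mapsto T(Z_0)Z_0$ is positively homogeneous of degree zero: it is constant on rays, its image is the image of the unit sphere of $\zz$ (a set of dimension at most $\dim\zz-1$), and it is never a local homeomorphism of $\zz\setminus\{0\}$ into $\zz$; for $H$-type examples it is literally $Z_0\mapsto 2\pi Z_0/|Z_0|$. Thus the radial degree of freedom in $Z_0$ is invisible to the leading term, and any openness of $W$ must come from the bracket-integral term $\tfrac12\int_0^T[X,X']\,ds$ --- precisely the term you propose to treat as a controllable perturbation. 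Showing that the full map $W$ is open, equivalently that the first hit map has maximal rank on a dense set, is the entire content of the hard step; the paper's Sections \ref{subsec:FirstHitApproach} through \ref{K3graphSubsection} are devoted to carrying out exactly this rank computation case by case, and they show it can genuinely fail in nearby (singular) situations such as $K_{1,n}$ with $n\geq3$ and $K_3$, where a hand-built lattice must be used instead. A secondary issue: continuity of $T(Z_0)$ does not follow from smooth dependence of the eigenvalues, since the minimal common period is $2\pi$ divided by a ``greatest common divisor'' of the $\vartheta_k(Z_0)$, which is a priori wildly discontinuous; one must first argue that under the global resonance hypothesis the ratios $\vartheta_i/\vartheta_k$, being continuous and rational-valued, are locally constant on the open dense set where the eigenvalue multiplicities do not change.
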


Lee and Park generalized Mast's result,   relaxing both  the nonsingularity and the resonance hypotheses.

\begin{thm}\cite{LP}\label{LPDCG} Let $\{ \n,
  \metric \}$ be a  metric, 2-step 
  nilpotent Lie algebra such that 
\begin{enumerate}
\item $j(Z)$ is in resonance for a dense set of $Z \in \zz$.
\item $j(Z)$ is nonsingular for some nonzero $Z \in
  \zz$.  
\end{enumerate} 
Then for any lattice $\Gamma$ of $N$, the nilmanifold $\GN$ has the
density of closed geodesics property.  
\end{thm}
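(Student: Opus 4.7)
The plan is to adapt Mast's proof of Theorem~\ref{MaDCG} so that the nonsingularity hypothesis need only hold on an open dense subset of $\zz$ rather than on all of $\zz \setminus \{0\}$. The essential ingredients are Eberlein's explicit geodesic equations on a simply connected metric 2-step nilpotent Lie group, his first-hit map characterization of when a geodesic starting at the identity of $N$ first reaches a lattice element, and a perturbation argument exploiting the density of central elements which are simultaneously nonsingular and in resonance.

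First I would show that $\mathcal{R} := \{Z \in \zz : j(Z) \text{ is nonsingular}\}$ is open and dense in $\zz$. Openness is immediate from the continuity of $\det \circ j$. For density, observe that hypothesis (2) forces $\V$ to be even dimensional (else every skew-symmetric $j(Z)$ would be singular), so $\det j(Z) = \pf(j(Z))^2$ where $\pf \circ j$ is polynomial in the coordinates of $Z$; this polynomial is nonzero by hypothesis (2), hence its zero set has empty interior. Intersecting $\mathcal{R}$ with the resonant set from hypothesis (1) yields a dense subset $\mathcal{D} \subseteq \zz$ on which $j(Z)$ is both nonsingular and in resonance. Given an arbitrary unit tangent vector to $\GN$, I lift it to an initial velocity $(X_0, Z_0) \in \V \oplus \zz$ for a geodesic in $N$ issuing from the identity. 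Assuming $Z_0 \neq 0$, I approximate $Z_0$ by some $Z \in \mathcal{D}$. Because $j(Z)$ is nonsingular and in resonance, all eigenvalues of $j(Z)$ are nonzero and pairwise rationally commensurable, so the $\V$-component of the geodesic with initial data $(X, Z)$ is periodic with a common period $T = T(Z)$. The displacement $\gamma(T) \in N$ then lies entirely in $\exp(\zz)$, and its central coordinate is a smooth function of $(X, Z)$ whose partial derivative in $X$ is surjective when $j(Z)$ is invertible. By the implicit function theorem, one solves for $X$ near $X_0$ so that $\gamma(T)$ equals a prescribed element of $\Gamma$ close to the identity in the $\zz$-direction, producing a smoothly closed geodesic in $\GN$ whose initial tangent approximates $(X_0, Z_0)$.

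The main obstacle is the boundary case $Z_0 = 0$, where the geodesic in $N$ is a straight line in $\V$ and the resonance mechanism does not apply directly; one handles it by first perturbing into the open set where $Z \neq 0$ and then invoking the previous case, using that unit tangent vectors with nonzero central component are dense in the unit tangent bundle. A related quantitative difficulty is controlling the $X$-perturbation uniformly as $Z$ approaches the singular locus of $\zz$: the operator norm of $j(Z)^{-1}$ blows up there, so one must estimate carefully how large an $X$-perturbation is required to achieve closure and verify that it can be made small whenever $Z_0$ lies close to, but not in, the singular set. These uniform estimates, carried out in \cite{LP}, constitute the key technical improvement over Mast's original argument and are the step I would expect to consume the most effort.
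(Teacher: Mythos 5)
The paper does not actually prove this statement --- it is quoted from Lee--Park \cite{LP} --- but the machinery your proposal would need is exactly what the paper lays out in subsection \ref{subsec:FirstHitApproach}, and measured against that your sketch has the right skeleton (reduce to the dense set of $Z$ that are simultaneously nonsingular and resonant; your Pfaffian argument for the density of the nonsingular locus is correct and matches Lemma \ref{GMsing}) but two genuine gaps in the closing-up step. The first is the absence of the $m$-th hit iteration. The implicit function theorem gives only \emph{local} surjectivity: as $X$ ranges over a small neighborhood of $X_0$, the displacements $\gamma(T)$ sweep out a small open subset $\U$ of $\exp(\zz)$, whereas $\Gamma\cap\exp(\zz)$ is discrete, so generically $\U$ contains no lattice point and you cannot ``solve for $X$ so that $\gamma(T)$ equals a prescribed element of $\Gamma$.'' The indispensable step is that $F_Z^m(\xi)=mF_Z(\xi)$, so the image of an open set under the $m$-th hit map contains $m\U$, and since $\zz$ is a rational subalgebra for any lattice, $\log\Gamma\cap\zz$ is a cocompact vector lattice which $m\U$ must meet for $m$ large. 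Without this scaling (Theorem \ref{MaxRankImpliesDCGPThm} in the paper), the argument does not produce a translated geodesic.

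The second gap is the claim that the partial derivative in $X$ of the central coordinate of $\gamma(T)$ ``is surjective when $j(Z)$ is invertible.'' By Proposition \ref{geoeqns} that coordinate is $\om\bigl(rZ+\tfrac{1}{2}\sum_k[j(rZ)^{-1}\zeta_k,\zeta_k]\bigr)$, and surjectivity of its differential onto $\zz$ is precisely the maximal-rank condition for the first hit map; it is not a formal consequence of invertibility of $j(Z)$, it visibly fails at $X=0$, and establishing it on a dense open set of initial data is the technical core of Mast's and Lee--Park's arguments. As written you are assuming the hardest step. Conversely, the difficulty you single out --- uniform control of $\|j(Z)^{-1}\|$ as $Z$ approaches the singular locus --- is not actually needed: for each fixed nonsingular resonant $Z$ the $m$-th hit argument is soft and yields closed-geodesic tangent vectors arbitrarily close to any $\xi=X_0+rZ$, and the union of the slices $\{X+rZ\}$ over the dense set of good $Z$ is already dense in $\n$.
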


The results on the density of closed geodesics above and the results given later in this section rely on the following two propositions from \cite{E}. Proposition \ref{EResonanceProp} provides a strong connection between resonance and the geodesics in $N$ which are translated by left multiplication of a lattice element. This provides information on geodesics which project to smoothly closed geodesics in the quotient space $\GN$. 

Proposition \ref{geoeqns}  gives explicit solutions to the geodesic equations. Proposition \ref{geoeqns} requires some additional preliminary notation, decomposing $\V = \zz^{\perp}$ into invariant subspaces of $j(Z)$ for some $Z\in \zz$. These equations and the decomposition will be used again in subsection \ref{subsec:FirstHitApproach}.

\begin{prop}(\cite{E} Proposition 4.3 and Lemma 4.13) Let $N$ be a 2-step nilpotent Lie group with Lie algebra $\n$. Let $Z\in \zz$ be nonzero. The element $Z$ is in resonance if and only if $e^{\om j(Z)} = Id$ on $\V$ for some $\om >0$. Furthermore, $Z$ in resonance implies that the geodesic $\gamma$ in $N$ with $\gamma(0)=e$ and $\gamma'(0) = X+Z$ is translated by $\phi = \gamma(\om)$, that is, $\phi \cdot \gamma(t) = \gamma(t+\om)$,  where $\om>0$ satisfies $e^{\om j(Z)} = Id$ on $\V$.
\label{EResonanceProp}
\end{prop}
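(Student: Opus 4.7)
My plan is to prove the two assertions in sequence. For the equivalence I would lean on the normal form of skew-symmetric operators; for the translation statement, on left-invariance of the metric together with uniqueness of geodesics.

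First I would exploit the skew-symmetry of $j(Z)$ on $(\V,\metric)$ to put it in block-diagonal form with respect to some orthonormal basis of $\V$: $2\times 2$ blocks of the shape $\left(\begin{array}{cc} 0 & -\lambda_k \\ \lambda_k & 0\end{array}\right)$, one for each pair of nonzero eigenvalues $\pm i\lambda_k$ (counted with multiplicity), together with a zero block on $\ker j(Z)$. Exponentiating block-by-block shows $e^{\om j(Z)}$ is block diagonal with planar rotations through angles $\om\lambda_k$ and the identity on $\ker j(Z)$. Consequently $e^{\om j(Z)}=Id$ for some $\om>0$ if and only if $\om\lambda_k\in 2\pi\Z$ for every $k$. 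The reverse direction is then immediate: if such $\om$ exists, every ratio $\lambda_k/\lambda_\ell$ is rational. For the forward direction, given resonance, write $\lambda_k=(p_k/q_k)\lambda_1$ with $p_k,q_k\in\Z$, let $q=\mathrm{lcm}(q_k)$, and take $\om=2\pi q/\lambda_1$.

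For the ``furthermore'' statement, I would invoke the geodesic equations of Proposition \ref{geoeqns}. Writing $\gamma'(t)=X(t)+Z(t)$ in the left-invariant trivialization of $TN$, the 2-step nilpotent geodesic equations give $Z(t)\equiv Z$ and $X(t)=e^{tj(Z)}X$. Under the resonance hypothesis, $e^{\om j(Z)}=Id$, so $X(t+\om)=X(t)$ for all $t$. To obtain the translation property, I would set $\alpha(t):=\phi^{-1}\cdot\gamma(t+\om)$, observe that $\alpha(0)=e$, and use the fact that left translation by $\phi^{-1}$ is an isometry of $N$ to conclude that $\alpha$ is itself a geodesic. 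The key computation, performed in the left-invariant frame, is to verify $\alpha'(0)=X(\om)+Z=X+Z=\gamma'(0)$. Uniqueness of geodesics through $e$ with prescribed initial velocity then forces $\alpha=\gamma$, equivalently $\phi\cdot\gamma(t)=\gamma(t+\om)$.

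The main obstacle is the bookkeeping in the second half: one must carefully track how $\gamma'(\om)$ transforms under the differential of $L_{\phi^{-1}}$ to an element of $T_eN\cong\n$, and verify that the $\V$-component is $X(\om)$ while the $\zz$-component remains $Z$. Once this identification is made, the translation assertion reduces cleanly to the standard observation that isometries send geodesics to geodesics, combined with uniqueness of the geodesic with a given initial velocity at the identity.
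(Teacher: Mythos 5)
The paper does not actually prove this proposition---it is imported verbatim from Eberlein (\cite{E}, Proposition 4.3 and Lemma 4.13)---and your argument is correct and follows the standard (essentially Eberlein's) route: the skew-symmetric normal form of $j(Z)$ reduces the equivalence to simultaneous rationality of the rotation angles $\om\lambda_k$, and the translation statement follows from left translations being isometries together with uniqueness of the geodesic through $e$ with prescribed initial velocity. One small caveat: the first-order geodesic equations you invoke ($Z(t)\equiv Z$ constant and $X(t)=e^{tj(Z)}X$ for the components of the \emph{left-translated velocity}) are not literally the content of Proposition \ref{geoeqns}, which records the integrated components of $\log\gamma(t)$; you should quote the velocity form from \cite{E} directly (or derive the constancy of $Z(t)$ and $X'(t)=j(Z)X(t)$ from the left-invariant connection) rather than attribute it to that proposition.
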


Let $\n$ be a  2-step nilpotent Lie algebra with left invariant metric and write $\n =\V\oplus \zz$ as usual.  Let $Z$ be any nonzero element of $\zz$. Let $\{\pm i \vartheta_1(Z),\ldots , \pm i \vartheta_m(Z)\}$
denote the distinct nonzero eigenvalues of $j(Z)$. Then we decompose $\V$  as $\ds \V =
\oplus_{k=0}^m W_k$ where $W_0 = \ker (j(Z))$ and for $k\in\{1,2,\ldots, m\}$, $W_k$ is the eigenspace of $j(Z)^2$
corresponding to eigenvalue $-\vartheta_k(Z)^2$. Thus, for any $X\in \V$ we can write
\vspace{0.2in}

\begin{eqnarray}
X &=& V_1 +V_2 \mbox{ where } V_1\in W_0 \mbox{ and
  }\nonumber \\ 
V_2& =& \sum_{k=1}^{m} \zeta_k, \mbox{ where } \zeta_k \in W_k \mbox{ for }1
\leq k \leq m. \nonumber
\end{eqnarray}

The following proposition gives the solutions to the geodesic equations.

\begin{prop}\label{geoeqns}\cite{E} Let $\{N,\metric\}$ be a simply
  connected 2-step nilpotent Lie group with a left invariant metric
  and with Lie algebra $\n= \V \oplus \zz$. Let $\gamma(t) $ be a
  geodesic with $\gamma(0)=e$. Write $\gamma'(0) = X +Z \in \n$
  where $X \in \V$ and $Z\in \zz$. Write $\gamma(t) = \exp
  (X(t)+Z(t))$ where $X(t)\in\V$ and $Z(t)\in \zz$ for all $t\in \R$
  and $X'(0) = X$ and $Z'(0) = Z$. Then with respect to the
  notation above we have
\begin{enumerate}
\item $X(t) = tV_1+\left( e^{tj(Z)} - Id \right)(j(Z)^{-1}V_2)$.
\item $Z(t) = t\widetilde{Z}_1+\widetilde{Z}_2$, where
\begin{enumerate}
\item $\ds \widetilde{Z}_1 = Z+\frac{1}{2}[V_1,
  (e^{tj(Z)}+Id)(j(Z)^{-1}V_2)]+ \frac{1}{2}\sum_{k=1}^m
  [j(Z)^{-1}\zeta_k, \zeta_k]$
\item\begin{eqnarray} \widetilde{Z}_2(t) & = & [V_1,
    (Id-e^{tj(Z)})j(Z)^{-2}V_2]
    +\frac{1}{2}[e^{tj(Z)}j(Z)^{-1}V_2,j(Z)^{-1}V_2]\nonumber \\
    & & -\frac{1}{2}\sum_{i \neq k = 1}^m \left(\frac{1}{\vartheta_k(Z)^2 -
      \vartheta_i(Z)^2}\right)\left\{[e^{tj(Z)} j(Z)\zeta_i, e^{tj(Z)}
    j(Z)^{-1}\zeta_k] - [e^{tj(Z)}\zeta_i,e^{tj(Z)}\zeta_k]\right\}
    \nonumber
    \\
    & & +\frac{1}{2} \sum_{i \neq k = 1}^m \left(\frac{1}{\vartheta_k(Z)^2 -
      \vartheta_i(Z)^2}\right)\left\{[j(Z)\zeta_i, j(Z)^{-1} \zeta_k] - [\zeta_i,
    \zeta_k]\right\} \nonumber
\end{eqnarray}
\end{enumerate}
\end{enumerate}
\label{GeodesicEquations}
\end{prop}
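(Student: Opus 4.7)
My plan is to translate $\nabla_{\gamma'}\gamma' = 0$ into an ODE in $\n$ via the left-invariant frame, solve that ODE, and then integrate in the group to recover $\gamma(t)$ in exponential coordinates. First, the Levi-Civita connection $\nabla$ on $\metn$ follows from Koszul's formula and the defining identity $\langle j(Z)X, Y\rangle = \langle [X,Y], Z\rangle$: for left-invariant fields, $\nabla_X Y = \tfrac{1}{2}[X,Y]$ when $X, Y \in \V$, $\nabla_X Z = \nabla_Z X = -\tfrac{1}{2}j(Z)X$ when $X \in \V$ and $Z \in \zz$, and $\nabla_Z Z' = 0$ when $Z, Z' \in \zz$. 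Writing the left-invariant velocity along $\gamma$ as $\xi(t) = \xi_{\V}(t) + \xi_{\zz}(t)$ and substituting into the geodesic equation yields $\xi_{\V}'(t) = j(\xi_{\zz}(t))\,\xi_{\V}(t)$ and $\xi_{\zz}'(t) = 0$, so that $\xi_{\zz}(t) \equiv Z$ and $\xi_{\V}(t) = e^{tj(Z)}X$ where $\gamma'(0) = X + Z$.

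To pass from velocity to position, I would exploit that $\n$ is 2-step nilpotent, whence $(\ad \alpha)^2 = 0$ for every $\alpha \in \n$ and the differential of $\exp$ at $\alpha$ reduces to $Id - \tfrac{1}{2}\ad \alpha$. Consequently, if $\gamma(t) = \exp(X(t) + Z(t))$, the left-invariant velocity is exactly $X'(t) + Z'(t) - \tfrac{1}{2}[X(t), X'(t)]$. Matching $\V$- and $\zz$-components with $\xi_{\V}(t)$ and $\xi_{\zz}(t)$ gives the system
\begin{equation*}
X'(t) = e^{tj(Z)}X, \qquad Z'(t) = Z + \tfrac{1}{2}[X(t), X'(t)],
\end{equation*}
together with the initial conditions $X(0) = 0$, $Z(0) = 0$. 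Decomposing $X = V_1 + V_2$ with $V_1 \in W_0 = \ker j(Z)$ and $V_2 = \sum_{k=1}^{m} \zeta_k$, $\zeta_k \in W_k$, direct integration of $X'(t) = V_1 + e^{tj(Z)}V_2$ produces assertion (1): $X(t) = tV_1 + (e^{tj(Z)} - Id)j(Z)^{-1}V_2$.

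Assertion (2) requires substituting this formula for $X(t)$ into $Z'(t) = Z + \tfrac{1}{2}[X(t), X'(t)]$, expanding the bracket by bilinearity, and integrating term by term from $0$ to $t$. The expansion splits into three families: the $[V_1, \cdot]$ terms, which integrate to the $[V_1, \cdot]$ contributions of $\widetilde{Z}_1$ and $\widetilde{Z}_2$; the diagonal $[\zeta_k, \zeta_k]$-type terms, which, after using that $j(Z)|_{W_k}$ is invertible, yield the $[j(Z)^{-1}\zeta_k, \zeta_k]$ term of $\widetilde{Z}_1$ and the $[e^{tj(Z)}j(Z)^{-1}V_2, j(Z)^{-1}V_2]$ term of $\widetilde{Z}_2$; and the off-diagonal $[\zeta_i, \zeta_k]$ terms with $i \neq k$. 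The off-diagonal family is the main obstacle: integrands of the form $[e^{sj(Z)}\zeta_i, e^{sj(Z)}\zeta_k]$ do not integrate elementarily because $j(Z)$ has distinct purely imaginary eigenvalues on $W_i$ and $W_k$. Repeated integration by parts, using $j(Z)^2|_{W_k} = -\vartheta_k(Z)^2\,Id$, introduces the denominators $\vartheta_k(Z)^2 - \vartheta_i(Z)^2$; evaluating the antiderivative at the upper limit produces the oscillatory first sum of $\widetilde{Z}_2(t)$ and at the lower limit produces the constant second sum. Careful bookkeeping with the skew-symmetry of $j(Z)$ and the bilinearity of the bracket then assembles the stated formulas for $\widetilde{Z}_1$ and $\widetilde{Z}_2(t)$.
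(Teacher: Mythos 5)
The paper offers no proof of this proposition---it is quoted directly from Eberlein \cite{E}---so there is no internal argument to compare against; your derivation reconstructs Eberlein's original one (Koszul formula giving $\nabla_XY=\tfrac12[X,Y]$, $\nabla_XZ=\nabla_ZX=-\tfrac12 j(Z)X$, $\nabla_ZZ'=0$; reduction of the geodesic equation to $\xi_{\V}'=j(\xi_{\zz})\xi_{\V}$, $\xi_{\zz}'\equiv 0$; the identity $(d\exp)_\alpha = dL_{\exp\alpha}\circ\bigl(Id-\tfrac12\ad_\alpha\bigr)$ valid because $\ad_\alpha^2=0$; then term-by-term integration after decomposing $X=V_1+\sum_k\zeta_k$) and is sound. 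The only schematic step is the off-diagonal integration that produces the denominators $\vartheta_k(Z)^2-\vartheta_i(Z)^2$; ``repeated integration by parts'' actually yields a coupled second-order system for $[e^{sj(Z)}\zeta_i,e^{sj(Z)}\zeta_k]$ and $[e^{sj(Z)}j(Z)\zeta_i,e^{sj(Z)}j(Z)\zeta_k]$, and the cleanest closure is to verify by direct differentiation that $\ds\frac{-1}{\vartheta_k(Z)^2-\vartheta_i(Z)^2}\left\{[e^{sj(Z)}j(Z)\zeta_i, e^{sj(Z)}j(Z)^{-1}\zeta_k]-[e^{sj(Z)}\zeta_i,e^{sj(Z)}\zeta_k]\right\}$ is an antiderivative of the offending integrand, which immediately gives the two off-diagonal sums in $\widetilde{Z}_2(t)$ as the upper- and lower-limit evaluations.
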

%$\gamma$ be a geodesic in $N$ with $\gamma(0)=e$ and $\gamma'(0) = \xi = X+V$ where $X\in \V$, $0\neq Z\in \zz$.

There are several fundamental questions which arise in studying the density of closed geodesics. Given any simply connected 2-step nilpotent Lie group $N$ with any left invariant metric and given any lattice $\Gamma \subseteq N$ will the quotient $\GN$ have the density of closed geodesics property? The answer to this question is no, in general. We cannot guarantee density of closed geodesics in every $N$, as the condition of resonance is necessary, as given by Mast in Theorem \ref{MaResIsNec} above. 

Restricting to simply connected, metric, 2-step nilpotent Lie groups $N$ which do satisfy the resonance condition, the question of density of closed geodesics has been partially answered. The answer is affirmative in the case of Lie groups whose Lie algebra is either nonsingular or almost nonsingular by Lee-Park's result, Theorem \ref{LPDCG}. 

In the case of 2-step nilpotent Lie groups with singular Lie algebras, only partial results are known. In \cite{DeM}, an infinite class of singular, 2-step nilpotent, metric Lie algebras is given for which the compact quotient $\GN$ satisfies the density of closed geodesics property for any lattice $\Gamma \subseteq N$. In \cite{DC}, an infinite class of singular, 2-step nilpotent, metric Lie algebras is given for which the compact quotient $\GN$ satisfies the density of closed geodesics property for a certain large class of lattices $\Gamma \subseteq N$. There is no known example of a simply connected, singular, 2-step nilpotent Lie group with lattice $\Gamma  \subseteq N$ which satisfies the necessary resonance condition but which fails to have the density of closed geodesics in the quotient $\GN$. 

In the remainder of this paper, we address the density of closed geodesics in simply connected, metric, 2-step nilpotent Lie groups whose Lie algebra is constructed from a graph. Beginning in subsection \ref{k4graphs}, we apply the result of Lee-Park on almost nonsingular Lie algebras to establish density of closed geodesics in $\GN$ for any lattice $\Gamma$ in groups $N$ whose Lie algebra is constructed from particular connected graphs on four vertices. In subsection \ref{subsec:FirstHitApproach}, the first hit map method introduced by Eberlein in \cite{E} and used again in \cite{M},\cite{LP}, \cite{DeM}, and \cite{DC}, is explained and applied in the case $\n = \n_G$ where $G=P_3$ is the path on three vertices. It is proven using this method that the simply connected Lie group corresponding to the  singular, metric, 2-step nilpotent Lie algebra $\n_{P_3}$ has the density of closed geodesics property for any lattice $\Gamma \subseteq N$. 

The established first hit map method fails in the case of $\n_G$ where $G$ is a star graph, $K_{1,n}$, and in the case where $G$ is the complete graph on three vertices, $K_3$. These cases are addressed in subsections \ref{stargraphs} and \ref{K3graphSubsection}. In each case, a direct method is used to produce a particular lattice for which $\GN$ will have the density of closed geodesics.

\subsection{Density of closed geodesics for graphs on four vertices} \label{k4graphs}
%\section{DCG of subgraphs of $K_4$}

We prove that 2-step nilmanifolds arising from connected graphs  on four vertices have the density of closed geodesics property.  The proof of this result relies on the density of resonant vectors in the center of the associated Lie algebra.  

\begin{prop}  Let $\n_G=\V\oplus\zz$ be the 2-step nilpotent metric Lie algebra constructed from a connected graph $G$ on four vertices. There is a dense subset of $Z\in\zz$ such that $Z$ is in resonance.  \label{k4res} % for every $Z$ in this set.
\end{prop}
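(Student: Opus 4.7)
The plan is a case-by-case analysis of the six connected graphs on four vertices: $K_{1,3}$, $P_4$, $C_4$, the paw (a triangle with one pendant edge), the diamond ($K_4$ with one edge deleted), and $K_4$. The star graph $K_{1,3}$ is handled immediately by the computation in Example \ref{singex}: for every nonzero $Z\in\zz$ the eigenvalues of $j(Z)$ are $\{0,0,\pm i|Z|\}$, so only one pair of nonzero eigenvalues occurs and the resonance condition is vacuous; hence every nonzero $Z$ is in resonance.

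For each of the other five graphs, Proposition \ref{AlmostnonsigularCriterion} shows that $\n_G$ is almost nonsingular (each admits a vertex covering by two disjoint copies of $K_2$), so the set $U=\{Z\in\zz:j(Z)\text{ is nonsingular}\}$ is open and dense in $\zz$. Because $\dim\V=4$ and $j(Z)$ is skew-symmetric, its characteristic polynomial has the form $t^4+\alpha(Z)t^2+\beta(Z)$, where $\alpha(Z)=\lambda_1(Z)^2+\lambda_2(Z)^2$ is a quadratic form in the coordinates of $Z$ and $\beta(Z)=\lambda_1(Z)^2\lambda_2(Z)^2=\pf(j(Z))^2$. A short manipulation shows that on $U$ the resonance condition $\lambda_1/\lambda_2\in\Q_{>0}$ is equivalent to $\alpha(Z)^2/\beta(Z)=(r+1/r)^2$ for some $r\in\Q_{>0}$, and the set $\{(r+1/r)^2:r\in\Q_{>0}\}$ is dense in $[4,\infty)$.

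The main computational step is to verify, for each of these five graphs, that the rational function $\alpha^2/\beta$ is nonconstant. In each case one writes $j(Z)$ as an explicit $4\times 4$ skew-symmetric matrix in the vertex basis, computes $\alpha$ (the sum of squares of the edge weights of $Z$) and the Pfaffian using $\pf(M)=m_{12}m_{34}-m_{13}m_{24}+m_{14}m_{23}$, and exhibits two specific $Z$ giving distinct ratios. For $P_4$ with $Z=aZ_1+bZ_2+cZ_3$ one finds $\alpha=a^2+b^2+c^2$ and $\pf(j(Z))=\pm ac$; the choices $Z=Z_1+Z_3$ and $Z=Z_1+Z_2+Z_3$ yield $\alpha^2/\beta=4$ and $9$ respectively. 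Analogous short calculations dispose of $C_4$ (Pfaffian $ac-bd$), the paw, the diamond, and $K_4$.

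Granted nonconstancy, the proof finishes by continuity: since $\alpha^2-c\beta$ cannot vanish identically for any constant $c$, the function $\alpha^2/\beta$ is nonconstant on every nonempty open subset of $U$. On any open ball $B\subseteq U$ the image $(\alpha^2/\beta)(B)$ is therefore a nondegenerate connected subinterval of $[4,\infty)$, which contains values of the form $(r+1/r)^2$ with $r\in\Q_{>0}$. A preimage provides resonant $Z\in B$, giving density of resonant $Z$ in $U$ and hence in $\zz$. The principal obstacle is the routine but case-specific Pfaffian computations needed to confirm nonconstancy across the five almost nonsingular graphs.
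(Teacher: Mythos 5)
Your proof is correct, and while its overall architecture matches the paper's --- dispose of $K_{1,3}$ via the explicit eigenvalues $\{0,\pm i|Z|\}$, then for the remaining five graphs reduce resonance to rationality of the ratio $\lambda_1/\lambda_2$ of the eigenvalue moduli of the $4\times 4$ skew-symmetric matrix $j(Z)$ --- the key technical step is genuinely different. The paper studies $g(Z)=\frac{\alpha+\sqrt{\beta}}{\alpha-\sqrt{\beta}}=(\lambda_1/\lambda_2)^2$, where its $\beta$ is $|Z|^4-4\pf(j(Z))^2$ rather than your $\pf(j(Z))^2$; it computes $\partial g/\partial a_1$ explicitly in each of the five cases, shows this vanishes only on the zero set of a fifth-degree polynomial, and then invokes the Lee--Park argument that $\sqrt{g}^{-1}(\Q)$ is dense wherever $\nabla g\neq 0$. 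You instead work with $\alpha^2/\beta=(s+1/s)^2$, $s=\lambda_1/\lambda_2$, which has the advantage of being a genuine ratio of polynomials in the coordinates of $Z$, so nonconstancy reduces to two point evaluations and your conclusion follows from connectedness (a nonconstant continuous function on a ball has a nondegenerate interval as image, which must meet the dense set $\{(r+1/r)^2: r\in\Q_{>0}\}\subseteq[4,\infty)$) rather than from a rank computation; the polynomial identity argument correctly upgrades ``nonconstant on $U$'' to ``nonconstant on every ball in $U$.'' Moreover, the per-case Pfaffian computations you flag as the remaining obstacle largely evaporate: each of the five non-star graphs contains $P_4$ as a spanning subgraph (this is exactly what the paper uses to prove almost nonsingularity), so your two test vectors supported on such a path, giving the values $4$ and $9$, settle all five cases simultaneously. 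What the paper's gradient approach buys is local openness of $g$, a slightly stronger statement; for density of resonant vectors your route suffices, is more elementary, and avoids differentiating through the square root.
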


The proof of Proposition \ref{k4res} is largely computational and is found in Section \ref{pfk4res}.  We use Theorem \ref{LPDCG}, to prove directly Corollary \ref{Cor:DCGPK4}, namely the density of closed geodesics property for any almost nonsingular nilmanifold associated to a connected graph on four vertices.

%\begin{thm}[LP]
%Let $\n$ be a metric 2-step nilpotent Lie algebra such that (note this needs to be made more clear wherever it is first stated - need to say that $N$ is  the associated 2-step nilpotent Lie group and maybe write $\n=\V\oplus\zz$.)
%\begin{enumerate}
%\item $Z$ is in resonance for a dense set of $Z\in\zz$
%%\item $j(Z)$ is in resonance for a dense set of $Z\in\zz$
%\item $j(Z)$ is nonsingular for some nonzero $Z\in\zz$.
%\end{enumerate}
%Then for any lattice $\Gamma$ of $N$, the nilmanifold $\GN$ has the DCG property.
%\label{LPDCG}
%\end{thm}

%By Theorem of Mast, any 2-step nilmanifold $\GN$ constructed from any lattice $\Gamma$ in $N$, the Lie group associated  to particular graphs on 4 vertices, will have the DCG property as long as $j(Z)$ is in resonance for a dense set of $Z\in\zz$.  We must show that for a dense set of vectors $Z$ as above, the ratio of any two of the eigenvalues is rational.  To do this, we will consider the gradient of the map $g$, defined below, where $\sqrt{g}$ is the ratio of any two eigenvalues whose ratio is not $\pm 1$.

%Thus if we can show there is a dense set of resonant vectors for a given nilmanifold arising from an almost nonsingular Lie algebra, we can conclude that the nilmanifold has the DCG property.  To do this, we will consider the gradient of the map $g$, defined below, where $\sqrt{g}$ is the ratio of any two eigenvalues whose ratio is not $\pm 1$.

Let $K_4$ be the complete graph on four vertices and label the vertices $\{X_1,X_2,X_3,X_4\}$ and edges $\{Z_1,Z_2,\dots,Z_6\}$ as in Figure \ref{k4fig}.

\begin{figure}[h!]

\begin{tikzpicture}[->,>=stealth',shorten >=1pt,auto,
  thick,vertex/.style={circle,draw,fill,scale=.7,font=\sffamily\Large\bfseries},node distance=2in,thick]

\node[vertex, label=above:{$X_4$}](X4) {};
  \node[vertex, label=below:{$X_3$}](X3) [below of=X4] {};
  \node[vertex, label=above:{$X_1$}] (X1) [right of=X4]  {};
  \node[vertex, label=below:{$X_2$}] (X2)[below of=X1]  {};

\path[every node/.style={font=\sffamily\small}]

(X1) edge node[midway,right]{$Z_1$} (X2)
       edge node[midway,above]{$Z_3$} (X4)
        edge node[near end, below right]{\hspace{-.2cm}$Z_2$} (X3)
 (X2) edge node[midway,below]{$Z_4$} (X3)
       edge node[near end, below right]{\hspace{-.4cm}$Z_5$} (X4)
 (X3) edge node[midway,left]{$Z_6$} (X4);

\end{tikzpicture}
\caption{$K_4$}
\label{k4fig}
\end{figure}
%
%\begin{figure}[h!]  %note that the [h!] stops the figure from moving around in the document as latex wants it to.
%\begin{center}
% \includegraphics[width=.3\textwidth]{kfour} 
%\end{center}
%\caption{$K_4$}
%\label{k4fig}
%\end{figure} 
Then the Lie algebra constructed from $K_4$ has bracket relations: $[X_1,X_2]=Z_1, \ [X_1,X_3]=Z_2, \ [X_1,X_4]=Z_3, \ [X_2,X_3]=Z_4, \  [X_2,X_5]=Z_5,$ and $ [X_3,X_4]=Z_6$.  

The subgraphs of $K_4$ with edge and vertex labels are given in Figure \ref{figsubk4}.  %$G_1: K_4$ with edge $Z_6$ removed; $G_2: K_4$ with edges $Z_5$ and $Z_6$ removed; the cycle on four vertices:  $C_4$, which is $K_4$ with edges $Z_1$ and $Z_6$ removed; the path on four vertices: $P_4$, which is $K_4$ with edges $Z_2$, $Z_5$ and $Z_6$ removed; and  the star on four vertices, $K_{1,3}$, which is $K_4$ with edges $Z_4$, $Z_5$ and $Z_6$ removed. 
The underlying undirected graphs are the isomorphism classes of  all the connected subgraphs of $K_4$ on four vertices.

\begin{figure}[h!]

\begin{subfigure}[b]{.3\textwidth}
\begin{tikzpicture}[->,>=stealth',shorten >=1pt,auto,
  thick,vertex/.style={circle,draw,fill,scale=.7,font=\sffamily\Large\bfseries},node distance=2in,thick]

\node[vertex, label=above:{$X_4$}](X4) {};
  \node[vertex, label=below:{$X_3$}](X3) [below of=X4] {};
  \node[vertex, label=above:{$X_1$}] (X1) [right of=X4]  {};
  \node[vertex, label=below:{$X_2$}] (X2)[below of=X1]  {};

\path[every node/.style={font=\sffamily\small}]
 (X1) edge node[midway,right]{$Z_1$} (X2)
       edge node[midway,above]{$Z_3$} (X4)
        edge node[near end,below right]{\hspace{-.4cm} $Z_2$} (X3)
 (X2) edge node[midway,below]{$Z_4$} (X3)
       edge node[near end,below left]{\hspace{-.4cm} $Z_5$} (X4);

\end{tikzpicture}

\caption{$G_1$}
\end{subfigure}
~
\begin{subfigure}[b]{.3\textwidth}
\begin{tikzpicture}[->,>=stealth',shorten >=1pt,auto,
  thick,vertex/.style={circle,draw,fill,scale=.7,font=\sffamily\Large\bfseries},node distance=2in,thick]

\node[vertex, label=above:{$X_4$}](X4) {};
  \node[vertex, label=below:{$X_3$}](X3) [below of=X4] {};
  \node[vertex, label=above:{$X_1$}] (X1) [right of=X4]  {};
  \node[vertex, label=below:{$X_2$}] (X2)[below of=X1]  {};

\path[every node/.style={font=\sffamily\small}]
 (X1) edge node[midway,right]{$Z_1$} (X2)
       edge node[midway,above]{$Z_3$} (X4)
        edge node[near end,below right]{$Z_2$} (X3)
 (X2) edge node[midway,below]{$Z_4$} (X3);
       %edge node[near end,below left]{$Z_5$} (X4);

\end{tikzpicture}

\caption{$G_2$}
\end{subfigure}
~
\begin{subfigure}[b]{.3\textwidth}
\begin{tikzpicture}[->,>=stealth',shorten >=1pt,auto,
  thick,vertex/.style={circle,draw,fill,scale=.7,font=\sffamily\Large\bfseries},node distance=2in,thick]

\node[vertex, label=above:{$X_4$}](X4) {};
  \node[vertex, label=below:{$X_3$}](X3) [below of=X4] {};
  \node[vertex, label=above:{$X_1$}] (X1) [right of=X4]  {};
  \node[vertex, label=below:{$X_2$}] (X2)[below of=X1]  {};

\path[every node/.style={font=\sffamily\small}]
 (X1) edge node[midway,right]{$Z_1$} (X2)
       edge node[midway,above]{$Z_3$} (X4)
        %edge node[near end,below right]{$Z_2$} (X3)
 (X2) edge node[midway,below]{$Z_4$} (X3)
       %edge node[near end,below left]{$Z_5$} (X4)
(X3)edge node[near end,below left]{$Z_6$} (X4);

\end{tikzpicture}
\caption{$C_4$}
\end{subfigure}
\\
\begin{subfigure}[b]{.3\textwidth}
\begin{tikzpicture}[->,>=stealth',shorten >=1pt,auto,
  thick,vertex/.style={circle,draw,fill,scale=.7,font=\sffamily\Large\bfseries},node distance=2in,thick]

\node[vertex, label=above:{$X_4$}](X4) {};
  \node[vertex, label=below:{$X_3$}](X3) [below of=X4] {};
  \node[vertex, label=above:{$X_1$}] (X1) [right of=X4]  {};
  \node[vertex, label=below:{$X_2$}] (X2)[below of=X1]  {};

\path[every node/.style={font=\sffamily\small}]
 (X1) edge node[midway,right]{$Z_1$} (X2)
       edge node[midway,above]{$Z_3$} (X4)
        %edge node[near end,below right]{$Z_2$} (X3)
 (X2) edge node[midway,below]{$Z_4$} (X3);
       %edge node[near end,below left]{$Z_5$} (X4)
%(X3)edge node[near end,below left]{$Z_6$} (X4);

\end{tikzpicture}
\caption{$P_4$}
\end{subfigure}
~
\begin{subfigure}[b]{.3\textwidth}
\begin{tikzpicture}[->,>=stealth',shorten >=1pt,auto,
  thick,vertex/.style={circle,draw,fill,scale=.7,font=\sffamily\Large\bfseries},node distance=2in,thick]

\node[vertex, label=above:{$X_4$}](X4) {};
  \node[vertex, label=below:{$X_3$}](X3) [below of=X4] {};
  \node[vertex, label=above:{$X_1$}] (X1) [right of=X4]  {};
  \node[vertex, label=below:{$X_2$}] (X2)[below of=X1]  {};

\path[every node/.style={font=\sffamily\small}]
 (X1) edge node[midway,right]{$Z_1$} (X2)
       edge node[midway,above]{$Z_3$} (X4)
        edge node[near end,below right]{$Z_2$} (X3);
% (X2) edge node[midway,below]{$Z_4$} (X3)
       %edge node[near end,below left]{$Z_5$} (X4)
%(X3)edge node[near end,below left]{$Z_6$} (X4);

\end{tikzpicture}
\caption{$K_{1,3}$}
\end{subfigure}

\caption{Connected subgraphs of $K_4$ on four vertices}
\label{figsubk4}
\end{figure}
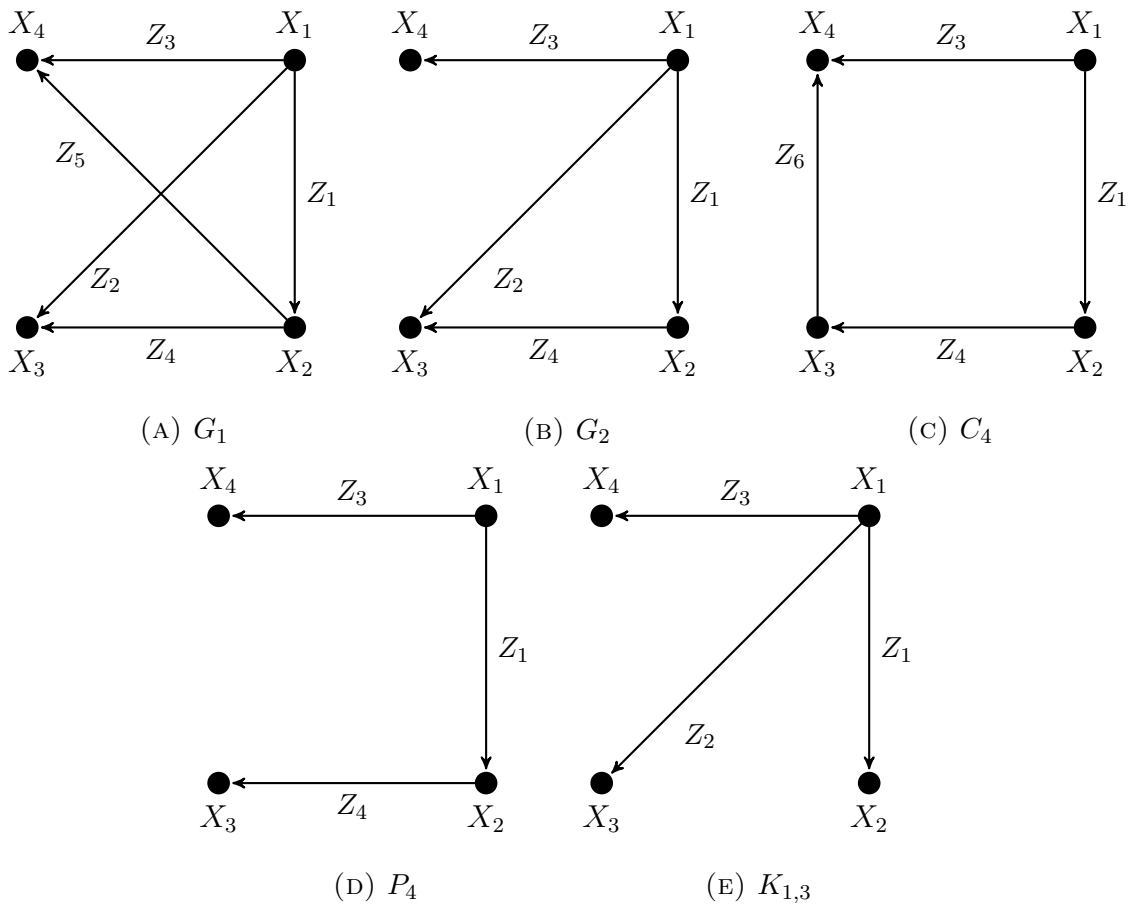
%\begin{center}
%$G_1$: \includegraphics[scale=.7]{kfourminusZ6}, \ \ \ $G_2$: \includegraphics[scale=.7]{kfourminusZ6Z5}, \ \ \ $C_4$: \includegraphics[scale=.7]{cfour2} \\ $P_4$: \includegraphics[scale=.7]{Pfour}, \ \ \ $K_{1,3}$: \includegraphics[scale=.7]{4starsubkfour}
%\end{center}
%\begin{center}
%$G_1$: \includegraphics[scale=.5]{kfourminusZ6}, \ \ \ $G_2$: \includegraphics[scale=.5]{kfourminusZ6Z5}, \ \ \ $C_4$: \includegraphics[scale=.5]{cfour2}, \ \ \ $P_4$: \includegraphics[scale=.5]{Pfour} \ \ \ $K_{1,3}$: \includegraphics[scale=.5]{4starsubkfour}
%\end{center}

%The Lie algebra associated to each of these graphs is almost nonsingular.  Note that the only other connected subgraph of $K_4$ on four vertices is the star on four vertices, which has a singular associated Lie algebra. 

The Lie algebra bracket definitions for each of the proper subgraphs follow from the brackets of $K_4$.  For any edge in $K_4$ that is removed to obtain the subgraph $G$,  the corresponding bracket in the corresponding Lie algebra $\n_G$ will be zero.%% DO I PUT HERE THE EIGENVALUES OF $Z=a_1Z_1+a_2Z_2+a_3Z_3+a_4Z_4+a_5Z_5+a_6Z_6$ for $K_4$ AND THEN COMMENT THAT THE EVALS FOR  THE SUBGRAPHS $G_1$, $G_2$ AND $C_4$ HAVE THE SAME STRUCTURE and therefore that the Lie algebras associated to all of these graphs are almost nonsingular.  The eigenvalues of $Z=a_1Z_1+a_2Z_2+a_3Z_3$  in the Lie algebra associated to $S_4$ are $\{0,\pm i|Z|\}$, thus this Lie algebra is singular. The following is our result on Lie algebras associated to connected subgraphs of $K_4$ on four vertices.

%\begin{thm}
%Let $\n$ be the 2-step nilpotent metric Lie algebra constructed from the graph $K_4$, $G_1$, $G_2$ or $C_4$, as defined above, and let $N$ be the associated simply connected Lie group.  For any lattice $\Gamma$ in $N$, the manifold $\GN$ has the density of closed geodesics property.
%\end{thm}

\begin{lemma}
The 2-step nilpotent Lie algebra $\n_G$ is  almost nonsingular when $G$ is $K_4$, $G_1$, $G_2$, $C_4$ or $P_4$.
\end{lemma}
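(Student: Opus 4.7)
The plan is to reduce the claim to a combinatorial statement about perfect matchings by invoking Proposition~\ref{AlmostnonsigularCriterion}: since each of the graphs listed has an even number of vertices ($2n = 4$, so $n = 2 > 1$), the associated Lie algebra $\n_G$ is almost nonsingular if and only if $G$ admits a vertex covering by $n=2$ disjoint copies of $K_2$, i.e., a perfect matching. Thus it suffices to exhibit, for each of $K_4$, $G_1$, $G_2$, $C_4$, and $P_4$, two disjoint edges whose endpoints exhaust the vertex set $\{X_1, X_2, X_3, X_4\}$.

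Referring to Figure~\ref{figsubk4}, I would check the five cases by direct inspection. For $K_4$, all six edges are present, and the pair $\{X_1X_2, X_3X_4\}$ is a perfect matching. For $G_1$ (the subgraph obtained by deleting $Z_6 = X_3X_4$), the same pair $\{X_1X_2, X_3X_4\}$ does not quite work; instead $\{X_1X_4, X_2X_3\}$ (edges $Z_3$ and $Z_4$) is a perfect matching still present in $G_1$. For $G_2$ (edges $Z_1, Z_2, Z_3, Z_4$), the pair $\{X_1X_4, X_2X_3\}$ (edges $Z_3, Z_4$) again works. For the cycle $C_4$ (edges $Z_1, Z_3, Z_4, Z_6$), the alternating pair $\{X_1X_2, X_3X_4\}$ (edges $Z_1, Z_6$) is a perfect matching. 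For $P_4$ (edges $Z_1, Z_3, Z_4$, i.e.\ the path $X_3\!-\!X_2\!-\!X_1\!-\!X_4$), the pair $\{X_2X_3, X_1X_4\}$ (edges $Z_4, Z_3$) gives the required matching.

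Once the five matchings are exhibited, the conclusion is immediate from Proposition~\ref{AlmostnonsigularCriterion}. Since in each case $|S| = 4 > 2$, Lemma~\ref{K2nonsinglemma} rules out nonsingularity, so by Lemma~\ref{GMsing} each $\n_G$ is either singular or almost nonsingular; the existence of the perfect matching places it in the almost nonsingular camp.

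There is no substantive obstacle here: the content of the lemma lies entirely in Proposition~\ref{AlmostnonsigularCriterion}, and the remaining step is a case-by-case visual inspection of five small graphs. The only graph on four vertices for which this argument would fail is $K_{1,3}$, which has no perfect matching because every edge is incident to the center vertex $X_1$; this is consistent with $K_{1,3}$ being excluded from the list (indeed, by Corollary~\ref{hlike}, $\n_{K_{1,3}}$ is Heisenberg-like and hence singular).
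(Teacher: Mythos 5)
Your proof is correct, but it takes a genuinely different route from the paper's. The paper does not invoke Proposition~\ref{AlmostnonsigularCriterion} here at all: it observes that each of $K_4$, $G_1$, $G_2$, $C_4$, $P_4$ contains a path of length three on four distinct vertices, and then reuses the computation from the proof of Proposition~\ref{path3}, where the element $Z = Z_1+Z_2+Z_3$ supported on such a path is shown to have $j(Z)$ with eigenvalues $\pm\tfrac{i}{2}(\sqrt5-1)$, $\pm\tfrac{i}{2}(\sqrt5+1)$; since $\dim\V = 4$, these are all the eigenvalues, so this $Z$ is a nonsingular element and $\n_G$ is almost nonsingular. Your argument instead reduces everything to the perfect-matching criterion of Proposition~\ref{AlmostnonsigularCriterion} (applicable since $|S|=4=2n$ with $n=2>1$), and your five matchings are each verified correctly against the edge labels of Figure~\ref{figsubk4}. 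What your route buys is economy and uniformity: once the Pfaffian criterion is in hand, the lemma is a one-line inspection per graph, and because the criterion is an equivalence it simultaneously explains why $K_{1,3}$ is the unique excluded case. What the paper's route buys is an explicit nonsingular witness $Z$ with four distinct nonzero eigenvalues, which is the same computation that later feeds the resonance analysis in Section~\ref{pfk4res}. Two small cosmetic points: the appeal to Lemma~\ref{K2nonsinglemma} and Lemma~\ref{GMsing} is redundant, since Proposition~\ref{AlmostnonsigularCriterion} already delivers the conclusion directly; and your parenthetical that $\n_{K_{1,3}}$ is singular ``because it is Heisenberg-like'' is a slight shortcut --- Heisenberg-like only forces singular-or-nonsingular, and one still needs Lemma~\ref{K2nonsinglemma} (or Example~\ref{singex}) to rule out nonsingularity. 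Neither affects the validity of the argument.
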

\begin{proof}
Let $G$ be the graph $K_4$, $G_1$, $G_2$, $C_4$ or $P_4$ and let $\n_G=\V\oplus\zz$ be the associated 2-step nilpotent Lie algebra.  We showed in Section \ref{singsect} that every 2-step nilpotent Lie algebra associated to any graph other than $K_2$ is either singular or almost nonsingular.    In the proof of Proposition \ref{path3},  an element $Z\in\zz$ is constructed such that $j(Z)$ has four distinct nonzero eigenvalues for any graph $G$ containing a path of length three on four vertices.  Note that each $G$ considered here contains a path of length three on four distinct vertices. Also, since $\dim(\V)=4$ there are precisely four eigenvalues in each case. Since all of the eigenvalues of the constructed $Z$ are nonzero, $Z$ is a nonsingular element of $\zz$ for the respective Lie algebra and $\ngg$ is almost nonsingular.
\end{proof}

By Theorem \ref{LPDCG}, almost nonsingular nilmanifolds have the DCG property if they have a dense set of resonant vectors.  Proposition \ref{k4res} shows that Lie algebras associated to all connected subgraphs  of $K_4$ on four vertices have a dense set of resonant vectors.  The following corollary follows immediately.

\begin{cor}
Let $\ngg$ be the 2-step nilpotent metric Lie algebra constructed from a connected subgraph $G$ of  $K_4$ where $G$ is one of the following $K_4$, $G_1$, $G_2$, $C_4$ or $P_4$.  Let $N$ be the associated simply connected 2-step nilpotent Lie group.  For any lattice $\Gamma$ in $N$, the manifold $\GN$ has the density of closed geodesics property. 
\label{Cor:DCGPK4} 
\end{cor}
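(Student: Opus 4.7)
The plan is to invoke the Lee-Park criterion (Theorem \ref{LPDCG}) directly, since the two hypotheses of that theorem have already been established (or nearly so) in the preceding material. Specifically, Theorem \ref{LPDCG} asserts that if a metric 2-step nilpotent Lie algebra $\n$ has (i) a dense set of resonant elements in $\zz$ and (ii) at least one nonsingular $Z \in \zz$, then $\GN$ has the density of closed geodesics property for every lattice $\Gamma \subseteq N$. So my strategy is simply to verify both conditions for each $G \in \{K_4, G_1, G_2, C_4, P_4\}$.

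First I would invoke the lemma immediately preceding the corollary, which shows that $\n_G$ is almost nonsingular for each such $G$. This means that there exists at least one nonzero $Z \in \zz$ with $j(Z)$ nonsingular, so hypothesis (ii) of Theorem \ref{LPDCG} holds in each of the five cases. The key input here is that each of the graphs $K_4$, $G_1$, $G_2$, $C_4$, and $P_4$ contains a path of length three on four distinct vertices, and the construction used in the proof of Proposition \ref{path3} produces an element $Z \in \zz$ whose associated map $j(Z)$ has four distinct nonzero eigenvalues on the four-dimensional space $\V$.

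Next I would invoke Proposition \ref{k4res}, which directly asserts that for any connected graph $G$ on four vertices the set of resonant $Z \in \zz$ is dense in $\zz$. Since each of the five graphs in the corollary is a connected graph on four vertices, hypothesis (i) of Theorem \ref{LPDCG} is satisfied in each case.

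With both hypotheses verified, Theorem \ref{LPDCG} applies and yields the desired conclusion: for any lattice $\Gamma \subseteq N$, the nilmanifold $\GN$ has the density of closed geodesics property. The main obstacle of this corollary is not really in this assembly step but is absorbed into Proposition \ref{k4res}, whose proof (deferred to Section \ref{pfk4res}) is the genuinely computational part; here the only thing to be careful about is simply confirming that the list of graphs covered coincides with those for which almost nonsingularity and dense resonance have been proved, and that no case has been omitted.
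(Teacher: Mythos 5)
Your proposal matches the paper's own argument: the lemma preceding the corollary supplies almost nonsingularity (hence a nonsingular $Z\in\zz$), Proposition \ref{k4res} supplies the dense set of resonant vectors, and Theorem \ref{LPDCG} then gives the density of closed geodesics for every lattice. This is exactly how the paper deduces Corollary \ref{Cor:DCGPK4}.
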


Note that there are two cases of connected subgraphs of $K_4$ on four vertices: the almost nonsingular Lie algebras, as indicated above and included in this result, and the singular Lie algebra associated to the star graph $K_{1,3}$.  The proof of the DCG property on the nilmanifold associated to the star graph is addressed in Section \ref{stargraphs}.  %By Theorem \ref{LPDCG} the proof of the density of closed geodesics in the remaining case then relies on showing that the Lie algebra $\n_G=\V\oplus\zz$ has a dense set of resonant vectors $Z\in\zz$ where $G$ is  one of the following:  $K_4$, $G_1$, $G_2$, $C_4$ or $P_4$.

%\begin{lemma}  Let $\n=\V\oplus\zz$ be the 2-step nilpotent metric Lie algebra constructed from the graph $K_4$, $G_1$, $G_2$ or $C_4$, as defined above. Then there is a dense subset of $\zz$ such that $j(Z)$ is in resonance for every $Z$ in this set.
%\end{lemma}

\subsection{The First Hit Map Approach to the Density of Closed Geodesics}\label{subsec:FirstHitApproach} Let $\zz_R$ denote the set of vectors $Z\in \zz$ for which $j(Z)$ is in resonance. Assume that $\zz_R$ is a dense open subset of $\zz$ and let $\Gamma$ be any lattice in $N$.

For any nonzero $\xi \in \n$, let $\gamma_\xi(t)$ denote the geodesic in $N$ with $\gamma_\xi (0) = e$ and $\gamma'(0)=\xi$.  The geodesic $\gamma_\xi (t)$ is translated by $g\in N$ if $g\cdot \gamma_\xi (t) = \gamma_\xi (t+\om)$ for some $\om>0$ and for all $t\in \R$.     If the geodesic $\gamma_\xi(t)$ is translated by a lattice element $g\in \Gamma$, then the geodesic $\gamma_\xi(t)$ in $N$ projects to a smoothly closed geodesic in $\GN$. By Lemma 13 of \cite{DeM}, if vectors tangent to smoothly closed geodesics in $\GN$ are dense in $\n$ then such vectors are dense in $T(\GN)$.  %By Lemma 13 of \cite{DeM}, if a dense set of geodesics through $e$ project to closed geodesics in $\GN$, then $\GN$ has the density of closed geodesics property. 
Hence,  it is enough to only consider geodesics which pass through $e\in N$. 

For any nonzero element $Z\in \zz$, define the following subspaces of $\n$. 

$\w_{Z} = \zz \oplus \ker j(Z) $

$\u_Z = \{X + r Z \, | \, r\neq 0, X \mbox { has nonzero component in } \ker j(Z)\}$\\

 The Lie algebra $\n$  is said to have a rational structure if there is a rational Lie subalgebra $\n_\Q$ with $\n=\n_\Q \otimes \R$. Equivalently, if  $\{ \zeta_1, \ldots , \zeta_n \}$ is a basis for $\n$ over $\R$, then $\n_\Q = \spn_\Q \{\zeta_1, \ldots, \zeta_n\}$ is a rational structure if and only if the basis has rational structure constants. If $\Gamma$ is a lattice in $N$ then $\n_{\Q} = \spn_{\Q} \{\log \Gamma\}$ gives a rational structure on $\n$.  A subalgebra $\h \subseteq \n$ is said to be rational with respect to a lattice $\Gamma$ in $N$ if the subalgebra $\h$ is rational with respect to the rational structure $\spn_{\Q} \{\log \Gamma\}$. If  $\Gamma$ is a lattice in $N$, then $\h \cap \log \Gamma$ is a vector lattice in $\h$ if and only if $\h$ is a rational subalgebra of $\n$. See \cite{CG}. 

For any lattice $\Gamma \subseteq N$, define the set $\zz_\Gamma$ in $\zz$ as follows.

$\zz_\Gamma = \{ Z \in \zz | \w_{Z} = \zz \oplus \ker j(Z) \mbox{ is rational with respect to  } \Gamma\}$.  \\

\begin{remark} 

\begin{enumerate} 

\item By Lemma 16 of \cite{DeM}, the set $\zz_\Gamma$ is dense in $\zz$ for any lattice $\Gamma \subseteq N$. If we assume that $\zz_R$ is open and dense in $\zz$, then the set $\zz_\Gamma\cap\zz_R$ is dense in $\zz$. 
\item $\w_Z$ is a subalgebra of $\n$ and $W_Z = exp(\w_Z)$ is a subgroup of $N$. 
\item $\w_{rZ} = \w_{Z}$ for all nonzero $r\in \R$. 
\item An element $Z \in \zz_\Gamma$ if and only if $rZ \in \zz_\Gamma$ for all nonzero $r\in \R$. 

\end{enumerate}
\end{remark}

 Let $\xi = X+rZ \in \n$ where $rZ\in \zz_R\cap \zz_\Gamma$,  $r\neq 0$ in $\R$ and $X$ has a nonzero component in the kernel of $j(Z)$. That is, $X+rZ\in \u_Z$. Since $Z$ is in resonance there exists a positive real number $\om$ so that $e^{\om j(rZ)} = Id$ on $\V$ by Proposition \ref{EResonanceProp}. Also by Proposition \ref{EResonanceProp}, it follows that $g =\gamma_\xi(\om)$ translates $\gamma_\xi(t)$. That is $\gamma_\xi(\om) \cdot \gamma_\xi (t) = \gamma_\xi (t+\om)$ for all $t\in \R$.  Using the geodesic equations given in Proposition \ref{geoeqns}, and the fact that $e^{\om j(rZ)} = Id$ on $\V$, it follows that $\log(\gamma_\xi(\om)) \in \w_{Z}$ .

Define a ``first hit'' map $F_Z: \u_Z \rightarrow \w_Z$ by $F(\xi) = \log (\gamma_\xi(\om))$. The map $F_Z$ carries the initial velocity of the geodesic $\gamma_\xi$ to the log of the intersection of $\gamma_\xi$ with $W_Z$, where $W_Z = \exp(\w_Z)$. 

From the geodesic equations, it is apparent that the geodesic intersects the subgroup $W_Z$ periodically, specifically for $t=m\om$ for any positive integer $m$. Therefore,  define the $m^{th}$ hit $F_Z^m : \u_Z \rightarrow \w_Z$ by $F_Z^m(\xi) = \log (\gamma_\xi (m\om))$. We note that
\begin{eqnarray*}
F_Z^m(\xi) &=& \log (\gamma_\xi (m\om)) \\
  &= & \log((\gamma_\xi(\om))^m)\\
	&=& m\log (\gamma_\xi(\om))\\
	&=& mF_Z(\xi).
\end{eqnarray*}

Assume the map $F_Z$ has maximal rank on a dense open subset $\widehat{\u}_Z$ of $\u_Z$. Then for any open set $\O \subseteq \widehat{\u}_Z\subseteq \u_Z$, the set $F_Z(\O)$ contains an open set $\U \subseteq F_Z(\O) \subseteq \w_Z$. Since $F_Z^m (\xi) = mF_Z(\xi)$, the set  $F_Z^m(\O)$ contains the set $m\U$. Therefore, for $m$ sufficiently large, $m\U \cap \log \Gamma \neq \emptyset$. 

If $Z\in \zz_\Gamma$ then $\w_Z$ is a rational subalgebra of $\n$ with respect to the lattice $\Gamma$. Therefore $\Gamma \cap W_Z$ is a lattice in $W_Z = exp (\w_Z) \subseteq N$.  Hence, there is a vector $\nu \in \O$ such that $\exp(F_Z^m (\nu)) = \gamma_\nu (m\om) \in \Gamma$. 

Thus $\gamma_\nu$ is a geodesic which is translated by an element of the lattice and $\gamma_\nu$ projects to a smoothly closed geodesic in $\GN$. Since $\O \subseteq \widehat{\u}_Z \subseteq \u_Z$ is an arbitrary open set, we conclude that the set of tangent vectors $\nu \in \u_Z$ for which $\gamma_\nu$ projects to a smoothly closed geodesic is dense in $\u_Z$. The union $\bigcup_{Z\in \zz_\Gamma \cap \zz_R} \u_Z$ is dense in $\n$. Hence $\GN$ has the density of closed geodesics property. This proves the following theorem.

\begin{thm} Let $\n$ be a singular 2-step nilpotent Lie algebra with an open and dense set of resonant vectors $\zz_R$. If the first hit map $F_Z$ has maximal rank on a set  $\widehat{\u}_Z$ which is open and dense in  $\u_Z$ then $\GN$ has the density of closed geodesics property for every lattice $\Gamma \subseteq N$. 
\label{MaxRankImpliesDCGPThm}
\end{thm}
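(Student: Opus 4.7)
The strategy is essentially laid out in the preceding subsection; the plan is to organize it into a clean argument. By Lemma 13 of \cite{DeM} it suffices to exhibit a dense set of $\xi \in \n$ whose geodesic $\gamma_\xi$ through the identity projects to a smoothly closed geodesic in $\GN$ — equivalently, by Proposition \ref{EResonanceProp}, such that $\gamma_\xi(T) \in \Gamma$ for some $T > 0$.

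Fix a lattice $\Gamma \subseteq N$ and work one $Z \in \zz_R \cap \zz_\Gamma$ at a time. This intersection is dense in $\zz$: $\zz_R$ is open and dense by hypothesis, while $\zz_\Gamma$ is dense by Lemma 16 of \cite{DeM}. For any such $Z$, resonance produces $\om > 0$ with $e^{\om j(Z)} = \mathrm{Id}$ on $\V$, and rationality of $\w_Z = \zz \oplus \ker j(Z)$ with respect to $\Gamma$ makes $\Gamma \cap W_Z$ a cocompact lattice in $W_Z = \exp(\w_Z)$, so $\log(\Gamma \cap W_Z)$ is a full-rank lattice in $\w_Z$.

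Next I would exploit the maximal-rank hypothesis on $F_Z \colon \u_Z \to \w_Z$: on any nonempty open set $\O \subseteq \widehat{\u}_Z$, the image $F_Z(\O)$ contains a nonempty open set $\U \subseteq \w_Z$. The scaling identity $F_Z^m = m F_Z$, which follows from Proposition \ref{GeodesicEquations} together with $e^{m\om j(Z)} = \mathrm{Id}$, gives $m\U \subseteq F_Z^m(\O)$. Because $\log(\Gamma \cap W_Z)$ is a full-rank lattice in $\w_Z$ and $\U$ contains an open ball, for all sufficiently large $m$ the dilated set $m\U$ meets this lattice, yielding $\nu \in \O$ with $\gamma_\nu(m\om) = \exp(F_Z^m(\nu)) \in \Gamma$. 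Hence the set of initial velocities in $\u_Z$ whose geodesics are translated by lattice elements is dense in $\u_Z$, and unioning over $Z \in \zz_R \cap \zz_\Gamma$ yields the desired dense subset of $\n$.

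The step I expect to require the most care is the final density of $\bigcup_Z \u_Z$ in $\n$. Given a target $\xi_0 = X_0 + Z_0$ and a neighborhood of it, one picks $Z$ close to $Z_0$ in $\zz_R \cap \zz_\Gamma$ (using openness of $\zz_R$ and density of $\zz_\Gamma$); then, crucially using the singularity of $\n$, one has $\ker j(Z) \neq 0$, so $X_0$ may be perturbed slightly to acquire a nonzero $\ker j(Z)$-component, placing the perturbation in $\u_Z$, and a further perturbation puts it into the open dense $\widehat{\u}_Z$. This is where the singular hypothesis actively enters the argument. The rest of the proof is bookkeeping within the first-hit-map apparatus already developed in subsection \ref{subsec:FirstHitApproach}.
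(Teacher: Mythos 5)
Your argument is correct and follows essentially the same route as the paper's own proof: reduce to geodesics through $e$ via Lemma 13 of \cite{DeM}, intersect $\zz_R$ with the dense set $\zz_\Gamma$, use maximal rank of $F_Z$ plus the scaling $F_Z^m = mF_Z$ to force $m\U$ to meet the full-rank lattice $\log(\Gamma\cap W_Z)$ in $\w_Z$, and take the union over $Z$. Your closing paragraph merely makes explicit the density of $\bigcup_Z \u_Z$ (and the role of singularity in keeping $\ker j(Z)\neq 0$), a point the paper asserts without elaboration, so there is no substantive difference.
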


As an application of the theorem, consider the metric 2-step nilpotent Lie algebra $\n_G$ constructed from the path on three vertices. 

\begin{prop} Let $\n_G$ be the metric 2-step nilpotent Lie algebra associated to the graph $G$. If  $G$ is the path on three vertices, then the first hit map $F_Z$ has maximal rank for $Z$ in an open dense subset of $\zz$.
\label{ThreePathMaxRankProp}
\end{prop}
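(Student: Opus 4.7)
The plan is to parameterize $\u_Z$ explicitly, compute $F_Z(\xi) = \log \gamma_\xi(\om)$ in closed form using the geodesic equations of Proposition \ref{geoeqns}, and then exhibit a nonvanishing $3 \times 3$ minor of its Jacobian on an open dense subset of $\u_Z$.

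For the setup, label the vertices of $P_3$ so that $[X_1, X_2] = Z_1$ and $[X_2, X_3] = Z_2$, and restrict to $Z = aZ_1 + bZ_2$ with $ab \neq 0$, which is an open dense condition on $\zz$. A direct calculation gives eigenvalues $0, \pm i|Z|$ for $j(Z)$, with $\ker j(Z) = \R e$ where $e = (bX_1 + aX_3)/|Z|$, and with $j(Z)^2$-eigenspace $W_1$ spanned by the orthonormal pair $X_2$ and $f = (aX_1 - bX_3)/|Z|$; on $W_1$ the map $j(Z)$ acts as a rotation of magnitude $|Z|$ sending $X_2 \mapsto -|Z|f$ and $f \mapsto |Z|X_2$. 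Setting $W = bZ_1 - aZ_2$ gives an orthonormal basis $\{Z/|Z|, W/|Z|\}$ of $\zz$. Every $\xi \in \u_Z$ then has the form $\xi = se + pX_2 + qf + rZ$ with $s, r \in \R \setminus \{0\}$, so $\u_Z$ is $4$-dimensional while $\w_Z = \R e \oplus \zz$ is $3$-dimensional.

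The period is $\om = 2\pi/(|r||Z|)$ since $j(rZ)$ rotates $W_1$ at angular speed $r|Z|$, and Proposition \ref{geoeqns} gives $X(\om) = \om s\, e$ immediately. For the $\zz$-component I will use $\dot Z(t) = rZ + \tfrac{1}{2}[X(t), \dot X(t)]$, which follows by writing $\gamma(t+\epsilon) = \gamma(t)\exp(\epsilon\gamma'(t) + \cdots)$ in exponential coordinates, so that
\begin{equation*}
Z(\om) = \om rZ + \tfrac{1}{2}\int_0^\om [X(s), \dot X(s)]\, ds.
\end{equation*}
Expanding the integrand via $\dot X(s) = V_1 + e^{s j(rZ)}V_2$ and the explicit formula for $X(s)$ produces three families of bracket terms; the only nontrivial brackets among $\{e, X_2, f\}$ are $[e, X_2] = W/|Z|$ and $[X_2, f] = -Z/|Z|$, so everything reduces to elementary trigonometric integrals. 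The lone oscillatory integral $\int_0^\om \cos(sr|Z|)\, ds$ vanishes since $\om r|Z| = 2\pi \sgn(r)$, and what survives collapses to
\begin{equation*}
Z(\om) = \om \left[r + \frac{p^2 + q^2}{2r|Z|^2}\right] Z - \frac{\om\, sq}{r|Z|^2}\, W.
\end{equation*}

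Writing $F_Z(\xi) = C\, e + A\, (Z/|Z|) + B\, (W/|Z|)$ and reading off the coefficients $A, B, C$ as explicit functions of $(s, p, q, r)$, the Jacobian block $\partial(C, A, B)/\partial(s, p, q)$ has determinant proportional (up to a nonzero constant) to $ps/(r^5|Z|^5)$, and is thus nonzero on the open dense subset of $\u_Z$ defined by $p \neq 0$; maximal rank follows. The main obstacle will be the integration of $[X(s), \dot X(s)]$ over a full period: tracking the rotational action of $e^{sj(rZ)}$ on $W_1$ and decomposing each bracket into its $Z$- and $W$-components is where the bulk of the computation lies. Once the closed form for $Z(\om)$ is in hand, the Jacobian rank verification is essentially immediate.
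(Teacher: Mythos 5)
Your proposal is correct and follows essentially the same route as the paper: both compute $F_Z$ in closed form from the geodesic equations and then check that $dF_\xi$ has rank $3$ on the open dense subset of $\u_Z$ where the coefficient of the central vertex is nonzero (your $p\neq 0$ is the paper's $b_2\neq 0$), and your expression for $Z(\om)$ agrees with the paper's formula (\ref{eq:FzFormula}) after rescaling to your orthonormal basis. Exhibiting the single minor $\partial(C,A,B)/\partial(s,p,q)$ with $r$ (hence $\om$) held fixed is a slightly cleaner finish than the paper's kernel computation; the only blemish is your throwaway description of the period integrals --- the term $\int_0^\om t\sin(tr|Z|)\,dt$ does not vanish and is precisely what produces the $W$-component --- but this does not affect your final formula, which is correct.
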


The next corollary on the density of closed geodesics follows immediately from Proposition \ref{ThreePathMaxRankProp} and Theorem \ref{MaxRankImpliesDCGPThm}.

\begin{cor} Let $\n_G$ be the metric 2-step nilpotent Lie algebra associated to the graph $G$ where $G$ is the path on three vertices. Then for any lattice $\Gamma$ in $N$, $\GN$ has the density of closed geodesics property.
\label{ThreePathMaxRankCor}
\end{cor}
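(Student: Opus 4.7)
The corollary follows immediately from Proposition \ref{ThreePathMaxRankProp} together with Theorem \ref{MaxRankImpliesDCGPThm}, so my plan targets the proposition; its conclusion is precisely what is needed to invoke the theorem. First I would fix coordinates: label the vertices of $P_3$ as $X_1, X_2, X_3$ so that $[X_1, X_2] = Z_1$ and $[X_2, X_3] = Z_2$ are the only nontrivial brackets in $\n_{P_3}$. For $Z = aZ_1 + bZ_2$, a direct $3 \times 3$ calculation (essentially a subcase of Example \ref{singex}) gives $j(Z)$ eigenvalues $\{0, \pm i|Z|\}$. With only one pair of nonzero eigenvalues, every nonzero $Z$ is automatically in resonance, so $\zz_R = \zz \setminus \{0\}$ is trivially open and dense, verifying the resonance hypothesis of Theorem \ref{MaxRankImpliesDCGPThm}.

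Next I would set up the first hit map. Fix $Z \neq 0$ and introduce the adapted basis $e_0 = bX_1 + aX_3$, $e_1 = aX_1 - bX_3$, $e_2 = X_2$, so that $\ker j(Z) = \R e_0$ and $W_1 = \spn\{e_1, e_2\}$ is the sole nonzero $j(Z)^2$-eigenspace. Then $\w_Z$ is $3$-dimensional and $\u_Z$ is $4$-dimensional, so maximal rank means the Jacobian of $F_Z$ is surjective. Parametrize $\xi = s e_0 + p e_1 + q e_2 + rZ$ (with $r \neq 0$) and take $\om = 2\pi/(|r||Z|)$ so that $e^{\om j(rZ)} = \mathrm{Id}$ on $\V$. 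The payoff of the single-eigenvalue-pair structure ($m=1$) is that Proposition \ref{geoeqns} collapses dramatically: the $\widetilde{Z}_2$-portion of $Z(\om)$ vanishes entirely (the double sums over $i \neq k$ are empty, and the remaining two terms contain either $\mathrm{Id} - e^{\om j(rZ)}$ or a self-bracket), leaving
\[F_Z(\xi) = \om V_1 + \om r Z + \om [V_1, j(rZ)^{-1} V_2] + \tfrac{\om}{2}[j(rZ)^{-1}V_2, V_2].\]

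The $P_3$ bracket structure then enters through three identities I would verify: $[e_1, e_2] = Z$, $[e_0, e_2] = bZ_1 - aZ_2$, and crucially $[e_0, e_1] = 0$ (the last because $X_1$ and $X_3$ are non-adjacent in $P_3$). Using these, I would write out the three components of $F_Z(\xi)$ in the basis $\{e_0, Z_1, Z_2\}$ of $\w_Z$ and compute the $3 \times 3$ minor of the Jacobian obtained from the partials in $s, p, q$. The vanishing $[e_0, e_1] = 0$ block-triangularizes this minor: the $e_0$-row is $(2\pi/(r|Z|), 0, 0)$, so the determinant reduces to a $2 \times 2$ determinant on the $Z_1, Z_2$ rows, which simplifies via the identity $(ap - sb)b - (sa + bp)a = -s(a^2 + b^2) = -s|Z|^2$ to a nonzero scalar multiple of $qs$. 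Hence $F_Z$ has rank $3$ on the open dense subset $\{sq \neq 0\} \subseteq \u_Z$, uniformly for all nonzero $Z$, which is in fact stronger than the proposition requires.

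The main obstacle will be the algebraic bookkeeping, especially tracking the $a,b$ dependence so that the Jacobian computation applies uniformly over all nonzero $Z$ rather than only at an axis-aligned choice such as $Z = Z_1$. The conceptual point that makes the argument clean is the non-adjacency of $X_1$ and $X_3$ in $P_3$: it forces $[e_0, e_1] = 0$, which block-triangularizes the Jacobian and reduces the entire surjectivity question to the nonvanishing of a single monomial in the four parameters.
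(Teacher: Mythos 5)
Your proposal is correct and follows essentially the same route as the paper: reduce the corollary to Proposition \ref{ThreePathMaxRankProp} via Theorem \ref{MaxRankImpliesDCGPThm}, note that resonance is automatic since the eigenvalues of $j(Z)$ are $\{0,\pm i|Z|\}$, decompose $\V$ into $\ker j(Z)$ and its complement with an adapted basis, use the collapse of $\widetilde{Z}_2$ to get the same closed-form first hit map, and verify rank $3$ on an open dense subset of $\u_Z$ (your nonvanishing minor condition $sq\neq 0$ corresponds exactly to the paper's $b_1\neq 0$, $b_2\neq 0$). The only cosmetic differences are the vertex labeling and that you exhibit a nonzero $3\times 3$ minor in the $\V$-directions where the paper instead computes the full differential and shows its kernel is one-dimensional.
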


\begin{proof}[Proof of Proposition \ref{ThreePathMaxRankProp}]

Let $G$ be the path on three vertices labeled as in Figure \ref{fig3path} and let $\n=\n_G$ be the associated metric 2-step nilpotent Lie algebra.

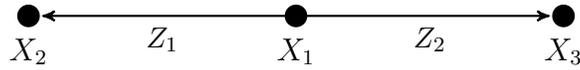
\begin{figure}[h!]
\begin{tikzpicture}[->,>=stealth',shorten >=1pt,auto,
  thick,vertex/.style={circle,draw,fill,scale=.7,font=\sffamily\Large\bfseries},node distance=2in,thick]

  \node[vertex, label=below:{$X_2$}](X2)  {};
  \node[vertex, label=below:{$X_1$}] (X1) [right of=X2]  {};
  \node[vertex, label=below:{$X_3$}] (X3)[right of=X1]  {};

\path[every node/.style={font=\sffamily\small}]
 (X1) edge node[midway,below]{$Z_1$} (X2)
       edge node[midway,below]{$Z_2$} (X3);

\end{tikzpicture}
\caption{The path on three vertices}
\label{fig3path}
\end{figure}
Then $\n= \V \oplus \zz$ where $\V$ has orthonormal basis $\{X_1, X_2, X_3\}$ and $\zz$ has orthonormal basis $\{Z_1, Z_2\}$.  Let $Z= a_1 Z_1 + a_2 Z_2$ be a nonzero element of $\zz$. In this basis for $\zz$, the map $j(Z)$ has matrix given by
$\ds j(Z) = \left( \begin{array}{ccc}
 0 & -a_1 & -a_2 \\
a_1 & 0 & 0 \\
a_2 & 0 & 0 
\end{array} \right).$
The eigenvalues of $j(Z)$ are $\{0, \pm i |Z| \}$. Observe from the eigenvalues that $\n_G$ is Heisenberg-like. This is consistent with Corollary \ref{hlike} since the path on three vertices is the star graph $K_{1,2}$. The set of resonant vectors $\zz_R \subseteq \zz$ is equal to $\zz_R = \zz - \{0\}$  and is open and dense in $\zz$. 

Decompose $\V$ into $j(Z)$-invariant subspaces $W_0$ and $W_1$ where $W_0 = \ker(j(Z)) = \spn \{\eta_1 \}$ and $W_1 = \spn \{ \eta_2, \eta_3\}$ for $\eta_1 = a_2 X_1 - a_1 X_3$, $\eta_2 = X_1$, and $\eta_3 = a_1 X_2 + a_2X_3$.  Let $\om \in \R$ be such that $e^{\om j(Z)} = Id$ on $\V$ and define the first hit map $F_Z: \u_Z \rightarrow \w_Z$ by $F_Z(\xi) = \log(\gamma_\xi (\om))$. Again we use the notation $\gamma_\xi (t)$ to denote the geodesic in $N$ with $\gamma_\xi(0) = e$, $\gamma_\xi' (t) = \xi$. 

Fix $r\neq 0$ in $\R$ and $X\in \V$ such that $\xi = X+rZ \in \u_Z$. We need to compute
\begin{eqnarray*}
F_Z(X+rZ) &=& \log(\gamma_\xi(\om))\\
 & = & X(\om) +Z(\om)
\end{eqnarray*}
where $\log(\gamma_\xi(t)) = X(t) + Z(t)$ is given by the geodesic equations shown in Proposition \ref{GeodesicEquations}. Write $X\in \V = W_0\oplus W_1$ as $X = V_1 +V_2$ where $V_1 \in W_0$ and $V_2 \in W_1$.    Then from the geodesic equations, $X(\om) = \om V_1$ and $Z(\om) = \om \widetilde{Z}_1(\om) + \widetilde{Z_2}(\om)$. Since $e^{\om j(Z)} = Id$ on $\V$ and since $j(Z)^2$ has only one nonzero eigenvalue, we see that $\widetilde{Z}_2(\om) = 0$ and 

\begin{equation} \widetilde{Z}_1(\om)  = rZ + \frac{1}{2} [V_1, 2 Id \, j(rZ)^{-1} V_2] + \frac{1}{2} [j(rZ)^{-1} V_2, V_2].
\label{3Path:Z1eqn}
\end{equation}

Write $X= V_1+V_2$ in the basis $\{\eta_1, \eta_2, \eta_3\}$ for $\V$; let $V_1 = b_1 \eta_1$ and $V_2 = b_2\eta_2 + b_3 \eta_3$ with coefficients $b_i \in \R$, $i\leq i\leq 3$. A straightforward computation gives $j(rZ)^{-1}$ on $W_1$: 

$$j(rZ)^{-1} (\eta_2) = \frac{-1}{r|Z|^2} \eta_3$$
$$j(rZ)^{-1} (\eta_3) = \frac{1}{r} \eta_2.$$

Then we have \begin{equation}j(rZ)^{-1} (V_2) = \frac{b_3}{r} \eta_2 - \frac{b_2}{r |Z|^2} \eta_3. \label{3Path:J-1}\end{equation}

We  also compute the bracket relations among $\{\eta_1, \eta_2, \eta_3\}$. 
\begin{equation} [\eta_1, \eta_2] = -a_2 Z_1 + a_1Z_2, [\eta_1, \eta_3] = 0, [\eta_2, \eta_3] = a_1 Z_1 + a_2 Z_2 = Z. \label{3Path:brackets} \end{equation}

Hence from the  (\ref{3Path:Z1eqn}), (\ref{3Path:J-1}) and (\ref{3Path:brackets}): 

\begin{eqnarray*}
\widetilde{Z}_1 (\om) &= &rZ + \frac{1}{2} \left[ b_1\eta_1, 2 \left( \frac{b_3}{r}\eta_1 - \frac{b_2}{r|Z|^2} \eta_3 \right) \right] + \frac{1}{2} \left[\frac{b_3}{r} \eta_2 - \frac{b_2}{r|Z|^2} \eta_3, b_2 \eta_2 + b_3 \eta_3 \right] \\
 & = & \left( r + \frac{b_3^2}{2r} + \frac{b_2^2}{2r|Z|^2 }\right) Z + \frac{b_1b_3}{r} ( -a_2Z_1 + a_1Z_2). 
\end{eqnarray*}

Thus, we obtain the following formula for the first hit map $F_Z$. 
\begin{eqnarray}
F_Z (X+rZ) & = & F_Z(b_1 \eta_1 + b_2\eta_2 + b_3 \eta_3+ ra_1Z_1 + ra_2Z_2) \nonumber \\
& = & \om \left\{ b_1\eta_1 + \left[ \left( r+ \frac{b_3^2}{2r} + \frac{b_2^2}{2r|Z|^2} \right) Z + \frac{b_1b_3}{r}(-a_2Z_1 + a_1Z_2 ) \right] \right\}.
\label{eq:FzFormula}
\end{eqnarray}

To show $F=F_Z: \u_Z \rightarrow \w_Z$ has maximal rank, we compute $dF_\xi (\eta)$ where $\eta \in T_\xi (\u_Z) = \spn \{ Z, \V\}$. Since $dF_\xi: \spn\{Z, \V\} \rightarrow \w_z$ maps a 4 dimensional space to a three dimensional space, it suffices to show that the dimension of the kernel of $dF_\xi$ is 1. Let $Y(s) = \xi + s \eta$ be the curve passing through $\xi = X+rZ$ with initial velocity $\eta \in \spn\{Z, \V\}$. Then $dF_\xi (\eta) = \frac{d}{ds} F(Y(s)) |_{s=0}$. 

We now compute $\frac{d}{ds} F(Y(s)) |_{s=0}$. Write $\eta$ in terms of the basis $\{\eta_1, \eta_2, \eta_3\}$ of $\V$. Let $\ds \eta =  (\Sigma_{i=1}^3 c_i \eta_i) +qZ$, $c_i, q \in \R$, $1\leq i \leq 3$. Then $$Y(s) = (b_1+sc_1) \eta_1 + (b_2 + sc_2)\eta_2 + (b_3 + sc_3) \eta_3 + (r+sq)a_1Z_1 + (r+sq)a_2Z_2.$$

From equation (\ref{eq:FzFormula})\begin{eqnarray*}
 F(Y(s)) &=& \om \left\{ (b_1+sc_1)\eta_1 + \left[ \left( (r+sq) + \frac{(b_3+sc_3)^2}{2(r+sq)} + \frac{(b_2+sc_2)^2}{2(r+sq) |Z|^2} \right) \right. \right. Z\\
 & &  \left. \left.+ \frac{(b_1+sc_1)(b_3+sc_3)}{r+sq} (-a_2Z_1 + a_1Z_2) \right]\right\}.
\end{eqnarray*}

A tedious, but straightforward computation gives 
\begin{eqnarray*}
\frac{d}{ds} F(Y(s))|_{s=0} & = & \om\left\{ c_1\eta_1 + \left[\left( q+\frac{1}{2} \left(\frac{2rb_3c_3 - c_3^2q}{r^2}\right) + \frac{1}{2|Z|^2} \left(\frac{2rb_2c_2 - c_2^2 q}{r^2}\right) \right) a_1\right. \right.\\
  & & \left.+ \left( \frac{r(b_1c_3 + b_3c_1) - b_1b_3 q}{r^2} \right) (-a_2)\right] Z_1 \\
	& & +\left[ \left( q+\frac{1}{2} \left( \frac{2rb_3c_3 - c_3^2 q}{r^2}\right) + \frac{1}{2|Z|^2} \left(\frac{2rb_2c_2 - b_2^2q}{r^2}\right) \right) a_2 \right.\\
	 & &\left. \left.+ \left(\frac{r(b_2c_3 + b_3c_1) - b_1b_3 q}{r^2}\right) a_1\right]Z_2\right\}.
\end{eqnarray*}

%If $dF_\xi(\eta) = 0$ then we see that 
%\begin{eqnarray}
%c_0 &=& 0
%\label{eq:III}
%\end{eqnarray}

If we assume that $b_1\neq 0$ and $b_2 \neq 0$, another lengthy, but straightforward computation shows that \[ \ker(dF_\xi) = \spn \left\{\delta \eta_2 + \frac{b_3}{r} \eta_3 + Z\right\}\]
where \[\delta = \frac{|Z|^2r}{b_2} \left(-1-\frac{b_3^2}{2r^2} + \frac{b_2^2}{2|Z|^2r^2}\right).\] 

Therefore, $F_\xi$ has maximal rank of 3 on $\widehat{\u_Z} = \{X+rZ \, | \, r\neq 0, \, b_1\neq 0,\,  b_2 \neq 0\}$ which is open and dense in $\u_Z$. And hence by Theorem \ref{MaxRankImpliesDCGPThm}, $\n_G$ has the density of closed geodesics property when $G$ is a path on 3 vertices. 

\end{proof}

\subsection{Density of closed geodesics for star graphs} \label{stargraphs}

In this section we  prove Theorem \ref{DCGforStar4orMore}. Let $G$ be a star graph on at least four vertices. Let $\n_G$ denote the associated 2-step nilpotent Lie algebra with corresponding simply connected nilpotent Lie group $N$.  We show there exists a lattice $\Gamma \subseteq N$ such that $\Gamma \backslash N$ has the density of closed geodesics property.

Let $G$ denote the star graph $K_{1,n}$ with vertices  $X_1, \ldots X_{n+1}$ where $n \geq 3$  and edges $Z_1, \ldots, Z_n$ as in Example \ref{singex}.
%We denote the vertices of $G$  by and  $X_1$ is adjacent to all vertices   $X_2, \ldots, X_n$ and  the degree of $X_i$ is 1 for all $i = 2, \ldots, n$.
Let $\n_G$ denote the associated metric 2-step nilpotent Lie algebra with orthonormal basis $\beta = \{X_1, \ldots X_{n+1},Z_1, \ldots Z_n\}$ and let $N$ denote the simply connected nilpotent Lie group with Lie algebra $\n_G$. Then in $\n_G$, we have bracket relations $Z_{i-1} = [X_1, X_i]$ for all $i = 2, \ldots n+1$. % and the center $\zz$ of $\n_G$ is the span of $\{Z_1, \ldots, Z_n\}$. 
 As before, we write $\n_G$ as $\n_G = \V \oplus \zz$ where $\V = \spn\{X_1, \ldots, X_{n+1}\}$, $\zz= \spn\{Z_1, \ldots, Z_n\}$. If  $Z = \sum_{i=1}^{n} a_i Z_i$, for $a_i \in \R$, $1\leq i \leq n$, then the matrix of $j(Z)$  with respect to a basis $\{X_1, \ldots, X_{n+1}\}$ is given by

$$j(Z) = \left(\begin{array}{cccccc} 0 & -a_1 & -a_2 &   \cdots & -a_{n-1} & -a_{n} \\  a_1 & 0 &  0 & \cdots & 0 & 0 \\ a_2 & 0 &   0 & \cdots & 0 & 0 \\ \vdots & \vdots &   \vdots & \vdots & \vdots & 0 \\ a_{n-1} & 0  & 0 &  \cdots  & 0  &  0 \\ a_{n}  & 0 & 0 &\cdots  & 0 &  0 
\end{array}\right).$$
The eigenvalues of $j(Z)$ are $0$ with multiplicity $n-1$ and $\pm i |Z|$.   Since every non-zero $Z \in \zz$ is in resonance,  for each $Z$ there exists a positive real number $\omega$  such that $e^{\omega j(Z)} = Id$ on $\V$ (see Proposition \ref{EResonanceProp}).  In fact, in this case it can be seen that  $\omega  = \frac{2 \pi }{|Z|} $ satisfies 
$e^{\omega j(Z)} = Id$  on $\V$ and further, $e^{m\omega j(Z)} = Id$ for all $m \in \mathbb N$ for each non-zero $Z\in \zz$.

Let $0 \neq Z =  \sum_{i=1}^{n} a_i Z_i \in \zz$. We write $\V = W_0 \oplus W_1$ where  $W_0$ denotes the $\ker j(Z)$ and $W_1$ is the orthogonal complement of $W_0$.  Then $W_0 = \Span \{\eta_1, \ldots, \eta_{n-1} \}$ where $\eta_i = -a_{i+1}X_2 + a_1 X_{i+2}$ for $i = 1, \ldots, n-1$ and   $W_1 = \Span \{  \eta_n, \eta_{n+1} \}$ where $\eta_n = X_1$ and $\eta_{n+1} = \sum_{i=1}^{n} a_{i}X_{i+1}$.  We also note that $j(Z) (\eta_n) = \eta_{n+1}$ and $j(Z)(\eta_{n+1}) = - |Z|^2 \eta_n$.  Hence the matrix of the inverse of $j(rZ)$ for $r\neq 0$ on $W_1$ with respect to basis $\{\eta_n, \eta_{n+1}\}$ is given by $j(rZ)^{-1}|_{W_1} = \left( \begin{array}{cc} 0 & \frac{1}{r} \\ \frac{-1}{r|Z|^2} & 0 \end{array}\right) $. The Lie brackets are given by \[ [\eta_i, \eta_k] = 0,\,\,
 [\eta_i, \eta_{n+1}] = 0,\,\,
[\eta_i, \eta_n] = a_{i+1} Z_1 - a_1 Z_{i+1}  \,\, \mbox{ for all } i,k = 1, \ldots, n-1, \]
\[ [ \eta_n, \eta_{n+1}] = Z.\]

%We will show that  there exists a lattice $\Gamma$ in $N$ such that the vectors tangent to smoothly closed geodesics in $\Gamma\backslash N$ are dense in $\n_G$.

We will follow the same notation from subsection \ref{subsec:FirstHitApproach}.  
For a fixed  $0 \neq Z  \in \zz$, we define    $\u_Z = \{X+rZ \,|\, r\neq 0, X\mbox{ has nonzero component in  }\ker j(Z) \}$ and $\w_Z = \zz \oplus \ker j(Z)$.  For $\xi \in \n_G$, let  $\gamma_\xi(t)$ denote the geodesic in $N$ with $\gamma_\xi(t) = e$ and $\gamma'_\xi(0) = \xi$.  Consider the first hit map $F_Z: \u_Z \rightarrow \w_Z$ given by $F_Z(\xi) = \log(\gamma_\xi(\omega))$, where $\omega \in \mathbb R$ is such that $e^{\omega j(Z)} = Id$  on $\V$.  Now let $\xi \in \u_Z$ and  $\xi = X + r Z$ for some $r \neq 0$. We write $X$ in terms of the new basis for $\V$ as $X = \sum_{i=1}^{n+1} b_i\eta_i$ with $b_i \in \R$ for $1\leq i \leq n+1$. Write $Z = \sum_{i=1}^n a_i Z_i$, with $a_i\in\R$, $1\leq i \leq n$. Using the geodesic equations given in Proposition \ref{geoeqns}, 
we compute $F_Z = \log(\gamma_\xi (\omega))$ to obtain the following expression.

\begin{equation}\label{GeodesicStarOn4}
F_Z (X+rZ) = \omega \left \{\sum_{i=1}^{n-1} b_i \eta_i + \left(r + \frac{b_{n+1}^2}{2r} + \frac{b_n^2}{2 r^2 |Z|^2}\right) Z  +
 \sum_{i=1}^{n-1} \frac{b_i b_{n+1}}{r}(a_{i+1} Z_1 - a_1 Z_{i+1}) \right\}.
\end{equation}

The first hit map $F_Z$ maps $\u_Z$, a $n+2$ dimensional space, to $\w_Z$, a $2n-1$ dimensional space. In the case $n=3$, $F_Z$ does not have full rank, and clearly for $n>3$ the rank of $F_Z$ is at most $n+2$ which is less than $2n-1$. So the first hit map approach, previously used to show the density of closed geodesics property holds in $\GN$ for any lattice $\Gamma$ fails in this case. 

Let $\Gamma$ denote the lattice in $N$ given by $\Gamma = \exp(\Lambda)$ for $\Lambda = \spn_{2\pi \Z} \{\beta\}$ where $\beta$ is the orthonormal basis of $\n_G$ given by the vertices and edges of the graph $G$.  Let $Z$ be any element in  $\spn_\Q\{Z_1, \ldots, Z_n\}$ such that $|Z| \in \mathbb Q$. Choose any $\xi \in \u_Z$ so that $\xi = X + rZ$ with  $r \in \Q$ and $X = \Sigma_{i=1}^{n+1} b_i \eta_i$  where $b_i\in\Q$ for $1\leq i \leq n_1$. Expand the equation  (\ref{GeodesicStarOn4}) in terms of the basis $\beta$ to see that for all $m \in \N$, the $m^{th}$ hit map is given by  $F^m_ Z(\xi) =  \log(\gamma_\xi(m\om ) ) = \frac{2 \pi m}{|Z|} Y$ where  $Y \in \spn_\Q \{\beta \}$.  Hence $\log(\gamma_\xi(m\om) ) \in \Lambda$ for sufficiently large  $m\in \N$.

Note that the set of $Z \in \zz \cap \spn_\Q\{Z_1, \ldots, Z_n\}$ such that $|Z| \in \mathbb Q$  is dense in $\zz$ which we justify by the following lemma.

\begin{lemma}
The set $\{ \textbf{v}  \in \mathbb Q^n \,\,|\,\, | \textbf{v} | \in \mathbb Q \}$  is dense in $\mathbb R^n$.

\end{lemma}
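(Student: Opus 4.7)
The plan is to reduce the statement to density of rational points on the unit sphere. For $n=1$ the assertion is immediate since every rational has rational absolute value and $\Q$ is dense in $\R$, so assume $n \geq 2$. Given a nonzero $\mathbf{v} \in \R^n$ and $\epsilon > 0$, write $\mathbf{v} = r \mathbf{w}$ with $r = |\mathbf{v}|$ and $\mathbf{w} \in S^{n-1}$. First choose a positive rational $q$ with $|q - r| < \epsilon/2$, then choose a rational unit vector $\mathbf{w}' \in S^{n-1} \cap \Q^n$ with $|\mathbf{w} - \mathbf{w}'| < \epsilon/(2q)$. Setting $\mathbf{u} := q \mathbf{w}'$, we have $\mathbf{u} \in \Q^n$, $|\mathbf{u}| = q \in \Q$, and by the triangle inequality
\[
|\mathbf{v} - \mathbf{u}| \;\leq\; |r - q|\, |\mathbf{w}| + q\, |\mathbf{w} - \mathbf{w}'| \;<\; \epsilon.
\]
The case $\mathbf{v} = 0$ is handled by taking $\mathbf{u} = 0$.

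It remains to verify that $S^{n-1} \cap \Q^n$ is dense in $S^{n-1}$. This follows from stereographic projection from the north pole $N = (0,\ldots,0,1)$, which yields a homeomorphism $\sigma : \R^{n-1} \to S^{n-1} \setminus \{N\}$ given by the \emph{rational} formula
\[
\sigma(\mathbf{y}) \;=\; \left( \frac{2\mathbf{y}}{|\mathbf{y}|^2 + 1},\; \frac{|\mathbf{y}|^2 - 1}{|\mathbf{y}|^2 + 1} \right).
\]
In particular $\sigma(\Q^{n-1}) \subseteq S^{n-1} \cap \Q^n$. Since $\Q^{n-1}$ is dense in $\R^{n-1}$ and $\sigma$ is continuous with continuous inverse, $\sigma(\Q^{n-1})$ is dense in $S^{n-1} \setminus \{N\}$, and therefore also dense in $S^{n-1}$.

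There is no serious obstacle in this argument; the only substantive ingredient is the existence of a rational parametrization of the sphere, which the explicit stereographic formula supplies. Approximating $\mathbf{v}$ by adjusting its direction and its magnitude separately is then just a triangle-inequality calculation.
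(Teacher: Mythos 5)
Your proposal is correct and follows essentially the same route as the paper: both reduce the claim to the density of $\Q^n \cap S^{n-1}$ in the unit sphere via stereographic projection, then approximate magnitude and direction separately by rationals and conclude with the triangle inequality. The only difference is that you write out the rational stereographic parametrization explicitly, which the paper merely cites.
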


\begin{proof} Let $\textbf{u} \in \mathbb R^n$, $\textbf{u} \neq 0$ and $\epsilon >0$.  Let  $S^{n-1}$ denote the unit sphere in $\mathbb R^n$. Note that $\mathbb Q^n \cap S^{n-1}$ is dense in $S^{n-1}$, which can be seen through stereographic projection. Thus, there exists $\textbf{w} \in \mathbb Q^n \cap S^{n-1}$ such that $|\textbf{w} - \frac{\textbf{u}}{|\textbf{u}|}\,| < \frac{\epsilon}{2|\textbf{u}|}$. Also there exists $s \in \mathbb Q$ such that $|s - |\textbf{u}| \,| < \frac{\epsilon}{2}$.  We then have $s\textbf{w} \in \mathbb Q^n$ and 
\begin{eqnarray*}
|s\textbf{w} - \textbf{u}| &\leq& |s\textbf{w} - |\textbf{u}| \textbf{w} \, | + |\, |\textbf{u}| \textbf{w} - \textbf{u}| \\
&\leq&  |s - |\textbf{u}| | + |\textbf{u}| |\, \textbf{w} -  \frac{\textbf{u}}{|\textbf{u}|}|  \,\, \,\,\,\,\text{ since } |\textbf{w} | = 1\\
&<& \epsilon.
\end{eqnarray*} 
\end{proof}

Therefore, there is a dense set of vectors $\xi \in \n_G$ such that the geodesic $\gamma_\xi(t)$ in $N$ is translated by a lattice element in the lattice $\Gamma = \exp(\spn_{2\pi\Z} \{\beta \})$. This proves Theorem \ref{DCGforStar4orMore}, stated below.

\begin{thm} \label{DCGforStar4orMore}
Let $\n_G$ denote the metric 2-step nilpotent Lie algebra determined to the star graph $G = K_{1,n}$ where $n \geq 3$ and let $N$ denote the associated simply connected nilpotent Lie group. Let $\beta$ denote the orthonormal basis determined by the graph, and let $\Gamma$ be the lattice in $N$ given by $\Gamma = \exp(\spn_{2\pi \Z} \{\beta \})$. Then the quotient $\Gamma \backslash N$ has the density of closed geodesics property.

\end{thm}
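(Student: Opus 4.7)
The plan is to follow the first hit map framework of subsection \ref{subsec:FirstHitApproach}, but since the standard maximal-rank criterion of Theorem \ref{MaxRankImpliesDCGPThm} fails here (the domain $\u_Z$ has dimension $n+2$ while the target $\w_Z$ has dimension $2n-1 \geq n+2$ for $n\geq 3$), I would abandon the rank-plus-arbitrary-lattice strategy and instead tailor a specific lattice with rich rational structure so that the explicit geodesic formula can be matched to lattice points directly.

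First I would record the features of $K_{1,n}$ that make this tractable: every nonzero $Z\in\zz$ has eigenvalues $\{0,\pm i|Z|\}$ (so $\n_G$ is Heisenberg-like by Corollary \ref{hlike} and trivially every nonzero $Z$ is in resonance), and moreover the uniform choice $\omega = 2\pi/|Z|$ satisfies $e^{\omega j(Z)}=Id$ on $\V$. Using the decomposition $\V = W_0\oplus W_1$ described in the excerpt and the geodesic equations of Proposition \ref{geoeqns}, I would compute the first hit map $F_Z$ explicitly, obtaining the formula (\ref{GeodesicStarOn4}), and then exploit $F_Z^m(\xi)=mF_Z(\xi)$.

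Next I would choose the candidate lattice $\Gamma = \exp(\spn_{2\pi\Z}\{\beta\})$ and verify it is genuinely a lattice (which is easy because the bracket constants of $\beta$ are in $\{0,\pm 1\}$). The key observation is that when $Z=\sum a_iZ_i$ has $a_i\in\Q$ and $|Z|\in\Q$, when $r\in\Q$, and when $X=\sum b_i\eta_i$ has $b_i\in\Q$, then expanding (\ref{GeodesicStarOn4}) in the original basis $\beta$ yields $F_Z(\xi) = \frac{2\pi}{|Z|}Y$ with $Y\in \spn_\Q\{\beta\}$; the factor $\omega = 2\pi/|Z|$ conspires with the $1/|Z|^2$ and $1/r$ denominators to produce a rational vector. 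Hence $F_Z^m(\xi) = \frac{2\pi m}{|Z|}Y$ lies in $\spn_{2\pi\Z}\{\beta\}=\log\Gamma$ once $m$ is a sufficiently divisible integer (the product of $|Z|$ and the lcm of denominators of the coefficients of $Y$).

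To finish, I would establish a short density lemma: the set of $\mathbf v\in\Q^n$ with $|\mathbf v|\in\Q$ is dense in $\R^n$. This reduces, via stereographic projection, to the density of $\Q^n\cap S^{n-1}$ in $S^{n-1}$ scaled by rationals. Combined with the previous step, this produces a dense set of $\xi\in\n_G$ whose geodesics $\gamma_\xi$ are translated by elements of $\Gamma$, so the corresponding geodesics in $\GN$ are smoothly closed; Lemma 13 of \cite{DeM} then upgrades density in $\n_G$ to density in the unit tangent bundle. The main obstacle I anticipate is the second step: the formula for $F_Z$ mixes $1/r$, $1/|Z|^2$, and $\omega=2\pi/|Z|$ factors in a delicate way, so verifying that the overall expression is genuinely a rational multiple of $2\pi/|Z|$ (and not just that each summand looks rational in isolation) requires a careful term-by-term expansion in the $\beta$ basis, in particular tracking the mixed brackets $[\eta_i,\eta_n]=a_{i+1}Z_1-a_1Z_{i+1}$.
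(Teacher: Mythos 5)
Your proposal is correct and follows essentially the same route as the paper: the same explicit computation of the first hit map for $K_{1,n}$, the same observation that the maximal-rank criterion fails, the same choice of lattice $\Gamma = \exp(\spn_{2\pi\Z}\{\beta\})$, the same rationality argument showing $F_Z^m(\xi)=\frac{2\pi m}{|Z|}Y$ with $Y\in\spn_\Q\{\beta\}$, and the same density lemma for rational vectors of rational norm proved via stereographic projection. No substantive differences.
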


\subsection{Density of closed geodesics for $K_3$} \label{K3graphSubsection}

We follow the procedure of subsection \ref{stargraphs} in the case where $G=K_3$. Let $\n_G$ be the metric 2-step nilpotent Lie algebra constructed from the directed graph $K_3$ with labels shown in Figure \ref{k3fig} with basis $\beta= \{X_1, X_2, X_3, Z_1, Z_2, Z_3\}$. 

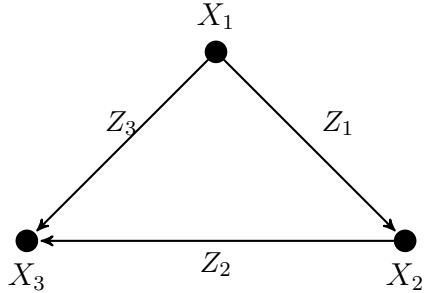
\begin{figure}[h!]
\begin{tikzpicture}[->,>=stealth',shorten >=1pt,auto,
  thick,vertex/.style={circle,draw,fill,scale=.7,font=\sffamily\Large\bfseries},node distance=2in,thick]

  \node[vertex, label=above:{$X_1$}](X1)  {};
  \node[vertex, label=below:{$X_2$}] (X2) [below right of=X1]  {};
  \node[vertex, label=below:{$X_3$}] (X3)[below left of=X1]  {};

\path[every node/.style={font=\sffamily\small}]
 (X1) edge node {$Z_1$} (X2)
       edge node [midway, above] {$Z_3$} (X3)
(X2) edge node {$Z_2$} (X3);
\end{tikzpicture}

\caption{The graph of $K_3$}
\label{k3fig}
\end{figure}

\begin{thm} Let $\n_G$ be the metric 2-step nilpotent Lie algebra determined by the complete graph $K_3$ and let $N$ be the associated simply connected 2-step nilpotent Lie group. Let $\beta$ denote the orthonormal basis determined by the graph, and let $\Gamma$ be the lattice in $N$ given by $\Gamma = \exp \left(\spn_{2\pi \Z} \{ \beta\}\right)$. Then the quotient $\GN$ has the density of closed geodesics property. 
\label{thm:K3DCGP}
\end{thm}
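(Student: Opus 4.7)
The plan is to mirror the argument used for the star graphs $K_{1,n}$ in subsection \ref{stargraphs}, adapting the computations to the bracket structure of $K_3$. The key observation enabling the same approach is that, by Corollary \ref{hlike}, $\n_{K_3}$ is Heisenberg-like with $j(Z)$-spectrum $\{0, \pm i |Z|\}$ for every nonzero $Z \in \zz$. Hence every nonzero central element is in resonance with $\om = 2\pi/|Z|$, and $e^{m\om j(Z)} = Id$ on $\V$ for every $m \in \N$. As in the star graph case, the Heisenberg-like eigenvalue structure forces $\tilde{Z}_2(\om) = 0$ in the geodesic equations, so only the $\om \tilde{Z}_1(\om)$ term survives in the center.

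First I would decompose $\V = W_0 \oplus W_1$ for a generic $Z = a_1 Z_1 + a_2 Z_2 + a_3 Z_3$, where $W_0 = \ker j(Z)$ is one-dimensional and $W_1 = W_0^\perp$ is two-dimensional, using the $3 \times 3$ skew matrix of $j(Z)$ coming from the brackets $[X_1,X_2]=Z_1,\ [X_2,X_3]=Z_2,\ [X_1,X_3]=Z_3$. I would choose an adapted basis $\{\eta_1, \eta_2, \eta_3\}$ of $\V$ with $\eta_1 \in W_0$ and $\eta_2, \eta_3 \in W_1$, each expressed as a linear combination of $X_1, X_2, X_3$ with coefficients polynomial in the $a_i$, and tabulate $j(rZ)^{-1}|_{W_1}$ and all brackets $[\eta_i, \eta_j]$. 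Applying Proposition \ref{geoeqns} to $\xi = X + rZ \in \u_Z$ with $X = b_1 \eta_1 + b_2 \eta_2 + b_3 \eta_3$, I would then derive an explicit closed-form expression for $F_Z(\xi) = \om\{b_1 \eta_1 + (\text{quadratic corrections})\, Z + (\text{bracket terms in } Z_1, Z_2, Z_3)\}$ entirely analogous to equation (\ref{GeodesicStarOn4}).

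Next, take $\Gamma = \exp(\Lambda)$ with $\Lambda = \spn_{2\pi \Z}\{\beta\}$, and restrict attention to $Z \in \spn_\Q\{Z_1, Z_2, Z_3\}$ with $|Z| \in \Q$ together with $\xi = X + rZ$ having all coefficients in $\Q$. Expanding the $m$-th hit $F_Z^m(\xi) = m F_Z(\xi)$ in the basis $\beta$, the closed-form formula collapses to $F_Z^m(\xi) = \frac{2\pi m}{|Z|} Y$ for some $Y \in \spn_\Q\{\beta\}$, so for sufficiently large $m \in \N$ we obtain $F_Z^m(\xi) \in \Lambda$ and hence $\gamma_\xi(m\om) \in \Gamma$. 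The density lemma established in subsection \ref{stargraphs} (that $\{\mathbf{v} \in \Q^n : |\mathbf{v}| \in \Q\}$ is dense in $\R^n$) guarantees that the admissible rational $Z$'s are dense in $\zz$, so varying $Z$ and the rational $\xi$ produces a set of initial velocities dense in $\n_{K_3}$ whose geodesics are translated by elements of $\Gamma$, and Lemma 13 of \cite{DeM} upgrades density in $\n_{K_3}$ to density in the unit tangent bundle of $\GN$.

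The main obstacle is the explicit computation of $F_Z$: unlike $K_{1,n}$, the $K_3$ bracket $[\eta_2, \eta_3]$ may contribute not just a scalar multiple of $Z$ but also cross terms involving $Z_1, Z_2, Z_3$ individually through the non-star bracket $[X_2, X_3] = Z_2$, so one must verify that the combined correction collapses into a $\Q$-linear combination of $\beta$ scaled by $1/|Z|$. Once the adapted basis $\{\eta_1, \eta_2, \eta_3\}$ is chosen with entries polynomial in $a_1, a_2, a_3$ and the $\tilde{Z}_1(\om)$ formula from Proposition \ref{geoeqns} is written out term by term, this verification is routine, but careful bookkeeping of the rationality of all coefficients in the resulting expression is the critical step.
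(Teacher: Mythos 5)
Your proposal is correct and follows essentially the same route as the paper's proof: resonance of every nonzero $Z$ with $\om = 2\pi/|Z|$, the decomposition $\V = \ker j(Z) \oplus W_1$ in an adapted basis $\{\eta_1,\eta_2,\eta_3\}$ with coefficients polynomial in the $a_i$, the explicit computation of $F_Z$ via Proposition \ref{geoeqns} with $\widetilde{Z}_2(\om)=0$, and the restriction to rational data with $|Z|\in\Q$ so that $F_Z^m(\xi)$ lands in $\Lambda = \spn_{2\pi\Z}\{\beta\}$ for large $m$. You even correctly anticipate the one genuinely new computational point relative to the star-graph case, namely that the brackets $[\eta_i,\eta_j]$ contribute cross terms in $Z_1, Z_2, Z_3$ whose coefficients must be checked to be polynomial (hence rational) in the $a_i$ and $b_j$, which is exactly how the paper's proof concludes.
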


\begin{proof}

  Let $Z= a_1Z_1+a_2Z_2 + a_3Z_3 \in \zz$. Then $j(Z) = \left( \begin{array}{ccc} 
0 &-a_1 & -a_3\\
a_1 & 0 & -a_2 \\
a_3 & a_2 & 0 
\end{array} \right)$.

Let $B=\om j(Z)$. The Rodrigues formula %(\textit{\textbf{Comment}: Citation needed? It's from the 1800's... maybe we can say ``The well-known Rodrigues...''?}) 
gives \[ e^B = I_3 + \frac{\sin(\theta)}{\theta} B + \frac{1-\cos(\theta)}{\theta^2} B^2,\] where $\ds \theta = \om|Z|$. It follows directly that $e^{\om j(Z)} = Id$ when $\ds \om = \frac{2\pi n}{|Z|}$ for $n\in \N$. 

The map $j(Z)$ has eigenvalues $\{0, \pm i|Z|\}$. Note that $\n$ is Heisenberg-like by Corollary \ref{hlike} and that every nonzero element $Z\in\zz$ is in resonance. We decompose $\V = \zz^{\perp}$ into $j(Z)$  invariant subspaces $W_0 = \ker j(Z) = \spn\{ \eta_1\}$ and $W_1 = W_0^{\perp} = \spn\{ \eta_2, \eta_3 \}$ where  
\[\eta_1 = a_2 X_1 - a_3X_2+a_1X_3, \,   \eta_2 = -a_1 X_1+a_2X_3, \,  \eta_3 = a_2a_3X_1 + (a_1^2+a_2^2)X_2 + a_1a_3X_3\]

A straightforward computation gives $j(Z) \eta_2 = -\eta_3$ and $j(Z) \eta_3 = |Z|^2 \eta_2$. If $r\in \R$ is nonzero, then with respect to the basis $\{\eta_1, \eta_2\}$ for $W_1$,  we obtain the matrix $j(rZ)^{-1} |_{W_1} = \left( \begin{array}{cc} 0 & \frac{1}{r} \\ \frac{-1}{r|Z|^2} & 0 \end{array}\right) $ and the brackets among $\{\eta_1, \eta_2,\eta_3\}$ are given:
\[ [\eta_1, \eta_2] = -a_1a_3Z_1 - a_2a_3Z_2+(a_1^2+a_2^2)Z_3 \]
\[ [\eta_1,\eta_3] = |Z|^2(a_2Z_1 - a_1Z_2)\]
\[ [\eta_2, \eta_3] = -a_1(a_1^2+a_2^2)Z_1 - (a_1^2a_3 +a_2(a_1^2+a_2^2))Z_2 -a_2^2a_3Z_3.\]

Following the strategy used in Subsection \ref{stargraphs}, we compute the first hit map $F_Z$ for $Z=a_1Z_1+a_2Z_2+a_3Z_3$ where $a_1, a_2, a_3 \in \R$. 

Since every nonzero $Z\in \zz$ is in resonance, there is a positive real number $\om$ with $e^{\om j(Z)} = Id$ on $\V$. Let $\u_Z = \{X+rZ \,|\, r>0, X\mbox{ has nonzero component in  }\ker j(Z) \}$ and $\w_Z = \zz \oplus \ker j(Z)$ and let $\xi \in \u_Z$. Then $\xi= X+rZ$ where $X= \sum_{i=1}^3 b_i \eta_i$ for $b_1, b_2, b_3 \in \R$.    The first hit map $F_Z: \u_Z \rightarrow \w_Z$ is given by $F_Z(\xi) = \log(\gamma_\xi (\om)) = X(\om)+Z(\om)$ where $X(\om)$ and $Z(\om)$ are found after a straightforward computation using the geodesic equations from Proposition \ref{GeodesicEquations}: 
\begin{eqnarray}
X(\om) &=& \om b_1 \eta_1 \nonumber\\
Z(\om) &=& \om \widetilde{Z_1} + \widetilde{Z_2}(\om) \mbox{ where }\\
 & & \widetilde{Z}_1(\om) = \left( ra_1-\frac{b_1b_2}{r}a_2 - \frac{b_1b_2}{r} a_1a_2 + \frac{1}{2r}(b_2^2+b_2b_3)(a_1)(a_1^2+a_2^2)\right) Z_1\nonumber\\
 & & + \left(ra_2+\frac{b_1b_2}{r}a_2a_3 + \frac{a}{2r} (b_2^2-b_2b_3)(a_1^2a_3 - a_2(a_1^2+a_2^2)) \right) Z_2 \nonumber\\
 & & + \left( ra_3 + \frac{b_1b_2}{r}(a_1^2+a_2^2) + \frac{1}{2r} ( b_2^2+b_2b_3)(a_2^2a_3) \right) Z_3\nonumber\\
 & & \widetilde{Z}_2(\om) = 0. \nonumber
\end{eqnarray}

The map $F_Z: \u_Z \rightarrow \w_Z$ is a map between 4 dimensional vector spaces. However, this map fails to have maximal rank. As in subsection \ref{stargraphs}, we can not prove density of closed geodesics in $\GN$ for any lattice $\Gamma$, and instead will establish the density of closed geodesics in $\GN$ for a particular lattice. 

Let $\Gamma$ be the lattice in $N$ given by $\exp (\Lambda)$ where $\Lambda$ is the vector lattice in $\n$ given by $\Lambda =  \spn_{2\pi\Z} \{\beta\}$. Let $A=\{Z\in\zz \, | \, |Z|\in \Q\}$ and choose $Z\in A \cap \spn_\Q \{Z_1, Z_2, Z_3\}$. This ensures that the coefficients $a_1, a_2, a_3 \in \Q$ and that $|Z| \in \Q$. Choose $\xi \in \u_Z$ so that $\xi = X+rZ$ with $r\in \Q$ and  $X\in \V$ so $X = \sum_{i=1}^3 b_i \eta_i$ where $b_i \in \Q$ for $1\leq i \leq 3$. Since the coefficients of $F_Z (\xi)$ are polynomial in $a_i, b_j$, we can choose $m\in \N$ sufficiently large so that $\ds F_Z(\xi) = \log \left( \gamma_{\xi} \left(\frac{2\pi m}{|Z|}\right) \right) \in \Lambda$.  

The set of $\xi \in \n$ with $\xi = X+rZ$ with $X \in \spn_\Q\{X_1, X_2, X_3\}$, $Z\in\spn_\Q\{Z_1, Z_2, Z_3\}$, $|Z|\in\Q$, $r\in \Q$ are dense in $\n$. Therefore, for a dense set of $\xi \in \n$, the geodesic $\gamma_\xi(t)$ in $N$ hits a lattice element and is translated by that lattice element by amount $\om$. Thus there is a dense set of closed geodesics in $\GN$.  
\end{proof}

\section{Proof of Proposition~{\ref{k4res}}}
\label{pfk4res}

%\subsection{Proof of Proposition \ref{k4res}} \label{pfk4res}
\begin{proof}  First consider the Lie algebra associated to the star graph $K_{1,3}$.  For any $Z\in \zz$, $j(Z)$ is singular and the eigenvalues are $\{0,\pm i|Z|\}$.  Thus all  nonzero $Z\in \zz$ are in resonance.

For any other connected graph on four vertices we mimic the proof of Lee-Park in \cite{LP} to show that we have a dense set of resonant vectors.

We start by considering the 10-dimensional 2-step nilpotent Lie algebra associated to $K_4$, as described in subsection \ref{k4graphs}.  For any $Z\in\zz$, we write $Z=a_1Z_1+a_2Z_2+a_3Z_3+a_4Z_4+a_5Z_5+a_6Z_6$, $a_i \in \R$ for $1 \leq i\leq 6$, and find the $j(Z)$ matrix to be $\left(\begin{array}{cccc} 0 & -a_1 & -a_2 & -a_3 \\ a_1 & 0 & -a_4 & -a_5 \\ a_2 & a_4 & 0 & -a_6 \\ a_3 & a_5 & a_6 & 0 \end{array}\right)$.  The eigenvalues of $j(Z)$ are $\displaystyle\left\{\pm\frac{i}{2}\sqrt{\alpha + \sqrt{\beta}}, \
 \pm\frac{i}{2}\sqrt{\alpha - \sqrt{\beta}}\right\}$, where
\begin{eqnarray}\alpha&=&|Z|^2=a_1^2+a_2^2+a_3^2+a_4^2+a_5^2+a_6^2  \label{alp}\\
\beta&=&|Z|^4-4(a_1a_6+a_3a_4-a_2a_5)^2.  \label{bet}
\end{eqnarray}
% and 
%\begin{eqnarray}\beta&=&|Z|^4-4(a_1a_6+a_3a_4-a_2a_5)^2 \end{eqnarray},
%as calculated in \cite{P}.  
Letting $a_0=a_1a_6+a_3a_4-a_2a_5$, we write $\beta =|Z|^4-4a_0^2$. See \cite{P}.

To each element $Z\in\zz$, written as above,  we associate the vector $\mb{v}\in\mathbb{R}^6$, defined as $\mb{v}=(a_1,a_2,a_3,a_4,a_5,a_6)$.  Through this identification, we have established a one-to-one mapping between $\zz$ and $\mathbb{R}^6$.  %Define a subset $U$ of $\mathbb{R}^6$ as $U=\{\mb{v}|a_i\neq 0 \ \forall i\}$.  
To consider the ratio of eigenvalues of $j(Z)$ for a given nonzero $Z\in\zz$, we define the map $g:\mathbb{R}^6\rightarrow \mathbb{R}$ as $\displaystyle g(\mb{v})=\frac{\alpha+\sqrt{\beta}}{\alpha-\sqrt{\beta}}$, where $\alpha$ and $\beta$ are defined in (\ref{alp}) and (\ref{bet}) in terms of the $a_i$.  %Note that we have arbitrarily assigned the numerator and denominator of this map.  Since we are only interested in the ratio of eigenvalues, if we have the case $\alpha=\sqrt{\beta}$, we can consider $\ds \frac{1}{g}$ to allow $g$ to be well-defined.  In this case $g=0$.  
We are only concerned with the ratio of nonzero eigenvalues, as defined by resonance.  Note that if $\alpha=\sqrt{\beta}$, then the eigenvalues of $j(Z)$ are $\left\{0,\pm i\sqrt{\frac{\alpha}{2}}\right\}$ and it is clear that $Z$ is in resonance without the use of the $g$ map. We consider the gradient of the map $g$  where $\sqrt{g}$ is the ratio of any two eigenvalues.    We show that the gradient $\nabla g(\mb{v})$ has rank 1 on a dense open subset of $\mathbb{R}^6$.  Define this set to be $V=\{\mb{v}\in \mathbb{R}^6| \nabla g(\mb{v})\neq \mb{0}\}$.% (or $V=\{\mb{v}\in U| \nabla g(\mb{v})\neq \mb{0}\}$).  
 Therefore, as in \cite{LP}, the inverse image of the rationals in $\mathbb{R}$ under $\sqrt{g}$ is dense in $\mathbb{R}^6$ and therefore in $\zz$.   Thus $j(Z)$ will be in resonance for a dense set of $Z$ in $\zz$. % NOTE: I want the inverse image of rationals under $\sqrt{g}$, as in LP - does this make sense here?

In calculating $\nabla g(\mb{v})$ we find $\ds \frac{\partial g}{\partial a_i}$ for $i=1,\dots, 6$.  In terms of $\alpha$ and $\beta$, 
\begin{eqnarray} \frac{\partial g}{\partial a_i}&=&\frac{1}{\sqrt{\beta}(\alpha-\sqrt{\beta})^2}\left(\alpha \frac{\partial \beta}{\partial a_i} -2\beta \frac{\partial \alpha}{\partial a_i} \right) \label{partgi} \end{eqnarray}
 %where $\ds \partalpha = 2a_i$ and $\ds \partbeta=4a_i|Z|^2+8a_0a_i^*$. 
If $\nabla g(\mb{v}) = \mathbf{0}$ for some $\mb{v}\in\mathbb{R}^6$, then $\ds \partg=0$ for all $i=1,\dots,6$ and from (\ref{partgi}) $\ds \alpha \frac{\partial \beta}{\partial a_i} -2\beta \frac{\partial \alpha}{\partial a_i} =0$. We show that for some $i$ the set of $\mb{v}$ such that $\ds \frac{\partial g}{\partial{a_i}}(\mb{v})\neq 0$ is a dense open subset of $\mathbb{R}^6$. We will do this by calculating $\ds \frac{\partial g}{\partial{a_1}}(\mb{v})$.

In the case of $K_4$, $\ds \frac{\partial \beta}{\partial a_1}=4a_1|Z|^2-8a_6a_0$ and $\ds  \frac{\partial \alpha}{\partial a_1}=2a_1$.  
Substituting into (\ref{partgi}),
\begin{eqnarray*} \frac{\partial g}{\partial a_1}&=&\frac{1}{\sqrt{\beta}(\alpha-\sqrt{\beta})^2}\left(\alpha \frac{\partial \beta}{\partial a_1} -2\beta \frac{\partial \alpha}{\partial a_1} \right)\\
&=&\frac{1}{\sqrt{\beta}(\alpha-\sqrt{\beta})^2}\left(|Z|^2 (4a_1|Z|^2-8a_6a_0) -2(|Z|^4-4a_0^2)(2a_1) \right)\\
&=&\frac{1}{\sqrt{\beta}(\alpha-\sqrt{\beta})^2}a_0\left(-8a_6|Z|^2 +16a_0a_1 \right)
%&=&\frac{1}{\sqrt{\beta}(\alpha-\sqrt{\beta})^2}a_0\left(-8a_6|Z|^2 +16a_0a_1 \right)\\
\end{eqnarray*}
If $\ds \frac{\partial g}{\partial a_1}=0$, then  $a_0\left(-a_6|Z|^2 +2a_0a_1 \right)=0$.  Note that $a_0\left(-a_6|Z|^2 +2a_0a_1 \right)=0$ is a fifth degree polynomial since $a_0=a_1a_6+a_3a_4-a_2a_5$ and $|Z|^2=a_1^2+a_2^2+a_3^2+a_4^2+a_5^2+a_6^2$, showing that the set of $\mb{v}$ on which $\ds \frac{\partial g}{\partial{a_1}}(\mb{v})=0$ is a closed algebraic variety since it is the zero set of a polynomial in the variables  $a_1,a_2,\dots,a_6$. % Then there is a dense and open set of vectors in $U$ also in $V$.  Therefore this set of vectors is also open and dense in $\mathbf{R}^6$.    .
 %which is equivalent to 
%\begin{eqnarray}\frac{\alpha}{2\beta}&=&\frac{\partbeta}{\partalpha} \label{ab1} 
%\end{eqnarray}

The same argument can be extended to show the existence of a dense set of resonant vectors associated to each of the subgraphs of $K_4$ by considering the map $g:\mathbb{R}^n\rightarrow \mathbb{R}$ for the appropriate $n$, as indicated below.  In each case we will show that the appropriately defined set $V=\{\mb{v}\in \mathbb{R}^n| \nabla g(\mb{v})\neq \mb{0}\}$ is precisely a closed algebraic variety, as in the $K_4$ case.  %Thus consider the set of $\mb{v}$ such that $\ds \frac{\partial g}{\partial{a_1}}(\mb{v})=0$.  
In each case where $G$ is a proper subgraph of $K_4$, $j(Z)$ is defined similarly, with $a_i=0$ for appropriate $i$ as defined by the subgraph.   For a generic $Z\in\zz$, the eigenvalues of $j(Z)$ are always $\displaystyle\left\{\pm\frac{i}{2}\sqrt{\alpha + \sqrt{\beta}},\  \pm\frac{i}{2}\sqrt{\alpha -\sqrt{\beta}}\right\}$, where $\alpha$ and $\beta$ are given in (\ref{alp}) and (\ref{bet}), again, with $a_k=0$ for all removed edges $Z_k$.  In each case we can define a corresponding $a_0$ and find an open and dense set for which  $\ds \frac{\partial g}{\partial a_1}\neq 0$ by showing that  $\ds \frac{\partial g}{\partial a_1}$ is a polynomial in the remaining $a_i$.  We summarize these details below for each case. 

\noindent \textbf{Case} $G_1$: $Z=a_1Z_1+a_2Z_2+a_3Z_3+a_4Z_4+a_5Z_5$.  Define $g:\mathbb{R}^5\rightarrow \mathbb{R}$ as above where
$\ds \alpha=|Z|^2=a_1^2+a_2^2+a_3^2+a_4^2+a_5^2$ and $\ds \beta=|Z|^4-4(a_3a_4-a_2a_5)^2$.  Here $a_0=a_3a_4-a_2a_5$. 
Then  $\ds \frac{\partial \beta}{\partial a_1}=4a_1|Z|^2$ and $\ds  \frac{\partial \alpha}{\partial a_1}=2a_1$.  
Plugging into (\ref{partgi}),
\begin{eqnarray*} \frac{\partial g}{\partial a_1}&=&\frac{1}{\sqrt{\beta}(\alpha-\sqrt{\beta})^2}\left(\alpha \frac{\partial \beta}{\partial a_1} -2\beta \frac{\partial \alpha}{\partial a_1} \right)\\
&=&\frac{1}{\sqrt{\beta}(\alpha-\sqrt{\beta})^2}\left(|Z|^2 (4a_1|Z|^2) -2(|Z|^4-4a_0^2)(2a_1) \right)\\
&=&\frac{1}{\sqrt{\beta}(\alpha-\sqrt{\beta})^2}\left(16a_0^2a_1 \right)\\
%&=&\frac{1}{\sqrt{\beta}(\alpha-\sqrt{\beta})^2}a_0\left(-8a_6|Z|^2 +16a_0a_1 \right)\\
\end{eqnarray*}
%In the case of $G_1$ we have removed $Z_6$ from $K_4$, so  $Z=a_1Z_1+a_2Z_2+a_3Z_3+a_4Z_4+a_5Z_5$.  The eigenvalues of $j(Z)$ are $\displaystyle\left\{\pm\frac{i}{2}\sqrt{\alpha \pm \sqrt{\beta}}\right\}$, where as in (\ref{alp}) and (\ref{bet}),
%$\ds \alpha=|Z|^2=a_1^2+a_2^2+a_3^2+a_4^2+a_5^2$ and $\ds \beta=|Z|^4-4(a_3a_4-a_2a_5)^2$, so we let $a_0=a_3a_4-a_2a_5$.  
%We consider $i=1$ again and calculate $\ds \frac{\partial \beta}{\partial a_1}=4a_1|Z|^2$ and $\ds  \frac{\partial \alpha}{\partial a_1}=2a_1$.  
%Plugging into (\ref{partgi}),
%\begin{eqnarray*} \frac{\partial g}{\partial a_1}&=&\frac{1}{\sqrt{\beta}(\alpha-\sqrt{\beta})^2}\left(\alpha \frac{\partial \beta}{\partial a_1} -2\beta \frac{\partial \alpha}{\partial a_1} \right)\\
%&=&\frac{1}{\sqrt{\beta}(\alpha-\sqrt{\beta})^2}\left(|Z|^2 (4a_1|Z|^2) -2(|Z|^4-4a_0^2)(2a_1) \right)\\
%&=&\frac{1}{\sqrt{\beta}(\alpha-\sqrt{\beta})^2}\left(16a_0^2a_1 \right)\\
%\end{eqnarray*}
Clearly $\nabla g(\mb{v})=\mb{0}$ only if $a_0^2a_1=0$ where $a_0^2a_1$ is a fifth degree polynomial.  %The set of $\mb{v}$ for which this does not happen is again the complement of a closed algebraic variety, and hence open and dense.  

\noindent \textbf{Case} $G_2$:  $Z=a_1Z_1+a_2Z_2+a_3Z_3+a_4Z_4$.  Define $g:\mathbb{R}^4\rightarrow \mathbb{R}$ as above where
$\ds \alpha=|Z|^2=a_1^2+a_2^2+a_3^2+a_4^2$ and $\ds \beta=|Z|^4-4(a_3a_4)^2$.  Here $a_0=a_3a_4$.  Then $\ds \frac{\partial \beta}{\partial a_1}=4a_1|Z|^2$ and $\ds  \frac{\partial \alpha}{\partial a_1}=2a_1$.  Here, the calculation of $\ds \frac{\partial g}{\partial a_1}$ is the same as in the $G_1$ case, with the result 
\begin{eqnarray*} \frac{\partial g}{\partial a_1}&=&\frac{1}{\sqrt{\beta}(\alpha-\sqrt{\beta})^2}\left(16a_0^2a_1 \right)
\end{eqnarray*}
%Plugging into (\ref{partgi}), we get the exact same calculation as in the $G_1$ case.  DO I TAKE IT OUT?
%\begin{eqnarray*} \frac{\partial g}{\partial a_1}&=&\frac{1}{\sqrt{\beta}(\alpha-\sqrt{\beta})^2}\left(\alpha \frac{\partial %\beta}{\partial a_1} -2\beta \frac{\partial \alpha}{\partial a_1} \right)\\
%&=&\frac{1}{\sqrt{\beta}(\alpha-\sqrt{\beta})^2}\left(|Z|^2 (4a_1|Z|^2) -2(|Z|^4-4a_0^2)(2a_1) \right)\\
%&=&\frac{1}{\sqrt{\beta}(\alpha-\sqrt{\beta})^2}\left(16a_0^2a_1 \right)\\
%&=&\frac{1}{\sqrt{\beta}(\alpha-\sqrt{\beta})^2}a_0\left(-8a_6|Z|^2 +16a_0a_1 \right)\\
%\end{eqnarray*}
 Clearly $\nabla g(\mb{v})=\mb{0}$ only if $a_0^2a_1=0$, where $a_0^2a_1$ again is a fifth degree polynomial.  %The set of $\mb{v}$ for which this does not happen is again the complement of a closed algebraic variety, and hence open and dense.  

%In the case of $G_2$ we have removed $Z_5$ and $Z_6$ from $K_4$, so  $Z=a_1Z_1+a_2Z_2+a_3Z_3+a_4Z_4$.  The eigenvalues of $j(Z)$ are $\displaystyle\left\{\pm\frac{i}{2}\sqrt{\alpha \pm \sqrt{\beta}}\right\}$, where as in (\ref{alp}) and (\ref{bet}),
%$\ds \alpha=|Z|^2=a_1^2+a_2^2+a_3^2+a_4^2$ and $\ds \beta=|Z|^4-4(a_3a_4)^2$, so we let $a_0=a_3a_4$.  We consider $i=1$ again and calculate $\ds \frac{\partial \beta}{\partial a_1}=4a_1|Z|^2$ and $\ds  \frac{\partial \alpha}{\partial a_1}=2a_1$.  
%Plugging into (\ref{partgi}), we get the exact same calculation as in the $G_1$ case.  DO I TAKE IT OUT?
%\begin{eqnarray*} \frac{\partial g}{\partial a_1}&=&\frac{1}{\sqrt{\beta}(\alpha-\sqrt{\beta})^2}\left(\alpha \frac{\partial \beta}{\partial a_1} -2\beta \frac{\partial \alpha}{\partial a_1} \right)\\
%&=&\frac{1}{\sqrt{\beta}(\alpha-\sqrt{\beta})^2}\left(|Z|^2 (4a_1|Z|^2) -2(|Z|^4-4a_0^2)(2a_1) \right)\\
%&=&\frac{1}{\sqrt{\beta}(\alpha-\sqrt{\beta})^2}\left(16a_0^2a_1 \right)\\
%&=&\frac{1}{\sqrt{\beta}(\alpha-\sqrt{\beta})^2}a_0\left(-8a_6|Z|^2 +16a_0a_1 \right)\\
%\end{eqnarray*}
 %Clearly $\nabla g(\mb{v})=\mb{0}$ only if $a_0^2a_1=0$.  The set of $\mb{v}$ for which this does not happen is again the complement of a closed algebraic variety, and hence open and dense.  

\noindent \textbf{Case}  $C_4$:   $Z=a_1Z_1+a_3Z_3+a_4Z_4+a_6Z_6$.  Define $g:\mathbb{R}^4\rightarrow \mathbb{R}$ as above where  
$\ds \alpha=|Z|^2=a_1^2+a_3^2+a_4^2+a_6^2$ and $\ds \beta=|Z|^4-4(a_1a_6+ a_3a_4)^2$.  Here $a_0=a_1a_6+a_3a_4$.  Then $\ds \frac{\partial \beta}{\partial a_1}=4a_1|Z|^2-8a_6a_0$ and $\ds  \frac{\partial \alpha}{\partial a_1}=2a_1$.  Here the calculation of $\ds \frac{\partial g}{\partial a_1}$ is the same as in the $K_4$ case.  We find
\begin{eqnarray*} \frac{\partial g}{\partial a_1}&=&
\frac{1}{\sqrt{\beta}(\alpha-\sqrt{\beta})^2}a_0\left(-8a_6|Z|^2 +16a_0^2a_1 \right)\\
%&=&\frac{1}{\sqrt{\beta}(\alpha-\sqrt{\beta})^2}a_0\left(-8a_6|Z|^2 +16a_0a_1 \right)\\
\end{eqnarray*}
 Thus if $\ds \frac{\partial g}{\partial a_1}=0$, then $a_0\left(-a_6|Z|^2 +2a_0a_1 \right)=0$ where $a_0\left(-a_6|Z|^2 +2a_0a_1 \right)$ is a  fifth degree polynomial.

\noindent \textbf{Case} $P_4$:  $Z=a_1Z_1+a_3Z_3+a_4Z_4$.  Define $g:\mathbb{R}^3\rightarrow \mathbb{R}$ as above where
$\ds \alpha=|Z|^2=a_1^2+a_3^2+a_4^2$ and $\ds \beta=|Z|^4-4(a_3a_4)^2$.  Here $a_0=a_3a_4$, as in the $G_2$ case.  Then the calculation mirrors that one where $\ds \frac{\partial \beta}{\partial a_1}=4a_1|Z|^2$ and $\ds  \frac{\partial \alpha}{\partial a_1}=2a_1$ and
\begin{eqnarray*} \frac{\partial g}{\partial a_1}&=&\frac{1}{\sqrt{\beta}(\alpha-\sqrt{\beta})^2}\left(16a_0^2a_1 \right)
\end{eqnarray*}
%Plugging into (\ref{partgi}), we get the exact same calculation as in the $G_1$ case.  DO I TAKE IT OUT?
%\begin{eqnarray*} \frac{\partial g}{\partial a_1}&=&\frac{1}{\sqrt{\beta}(\alpha-\sqrt{\beta})^2}\left(\alpha \frac{\partial %\beta}{\partial a_1} -2\beta \frac{\partial \alpha}{\partial a_1} \right)\\
%&=&\frac{1}{\sqrt{\beta}(\alpha-\sqrt{\beta})^2}\left(|Z|^2 (4a_1|Z|^2) -2(|Z|^4-4a_0^2)(2a_1) \right)\\
%&=&\frac{1}{\sqrt{\beta}(\alpha-\sqrt{\beta})^2}\left(16a_0^2a_1 \right)\\
%&=&\frac{1}{\sqrt{\beta}(\alpha-\sqrt{\beta})^2}a_0\left(-8a_6|Z|^2 +16a_0a_1 \right)\\
%\end{eqnarray*}
 Clearly $\nabla g(\mb{v})=\mb{0}$ only if $a_0^2a_1=0$, where $a_0^2a_1$ again is a fifth degree polynomial. % The set of $\mb{v}$ for which this does not happen is again the complement of a closed algebraic variety, and hence open and dense.  

%Plugging into (\ref{partgi}),
%\begin{eqnarray*} \frac{\partial g}{\partial a_1}&=&\frac{1}{\sqrt{\beta}(\alpha-\sqrt{\beta})^2}\left(\alpha \frac{\partial \beta}{\partial a_1} -2\beta \frac{\partial \alpha}{\partial a_1} \right)\\
%&=&\frac{1}{\sqrt{\beta}(\alpha-\sqrt{\beta})^2}\left(|Z|^2 (4a_1|Z|^2-8a_6a_0) -2(|Z|^4-4a_0^2)(2a_1) \right)\\
%&=&\frac{1}{\sqrt{\beta}(\alpha-\sqrt{\beta})^2}a_0\left(-8a_6|Z|^2 +16a_0^2a_1 \right)\\
%&=&\frac{1}{\sqrt{\beta}(\alpha-\sqrt{\beta})^2}a_0\left(-8a_6|Z|^2 +16a_0a_1 \right)\\
%\end{eqnarray*}
% Thus if $\ds \frac{\partial g}{\partial a_1}=0$, then the polynomial $a_0\left(-a_6|Z|^2 +2a_0a_1 \right)=0$. This is the same polynomial as in the $K_4$ case, with a different expression for $a_0$.

\end{proof}

\noindent{\bf Acknowledgements. }   Meera  Mainkar was supported by the Central Michigan University ORSP Early Career Investigator (ECI) grant \#C61940.

\end{document}